\documentclass[11pt]{amsart}
\usepackage{amssymb,latexsym,epsf}
\usepackage{hyperref}
\usepackage{url}
\usepackage{longtable}
\usepackage{mathrsfs}
\usepackage{multirow}
\usepackage{bigstrut}
\usepackage{amssymb}
\usepackage{graphicx,bm,epsfig,psfrag,subfigure}
\usepackage{amsfonts,amsmath,amssymb}

\usepackage{amsmath}
\usepackage{amsthm}
\usepackage{mathrsfs}

\newcommand{\C}{\mathbb{C}}

\newcommand{\Q}{\mathbb{Q}}
\newcommand{\R}{\mathbb{R}}
\newcommand{\Z}{\mathbb{Z}}

\newcommand{\PP}{\mathbb{P}}

\def\be{\begin{equation}}
\def\ee{\end{equation}}

\numberwithin{equation}{section}

\newtheorem{theorem}{Theorem}[section]
\newtheorem{prop}[theorem]{Proposition}
\newtheorem{lemma}[theorem]{Lemma}
\newtheorem{remark}[theorem]{Remark}

\newtheorem{definition}[theorem]{Definition}
\newtheorem{cor}[theorem]{Corollary}

\newtheorem{conj}[theorem]{Conjecture}

\begin{document}

\title{Luttinger surgery and Kodaira dimension}

\author{Chung-I Ho \& Tian-Jun Li}
%    Information for first author
%    Address of record for the research reported here
\address{School  of Mathematics\\  University of Minnesota\\ Minneapolis, MN 55455}
\email{tjli@math.umn.edu}
\address{School  of Mathematics\\  University of Minnesota\\ Minneapolis, MN 55455}
\email{hoxxx090@math.umn.edu}

\begin{abstract}
In this note we show that the Lagrangian Luttinger
surgery preserves  the symplectic Kodaira dimension. Some constraints on Lagrangian
tori in symplectic four manifolds with non-positive Kodaira dimension are also derived.
\end{abstract}
\maketitle

\section{Introduction}
Let $(X, \omega)$ be a symplectic 4-manifold  with a Lagrangian torus $L$.
It was discovered  by Luttinger in \cite{Lu} that there is a  family of  surgeries along $L$
that produce symplectic $4$-manifolds. This family is countable and indexed by the pairs $([\gamma], k)$, where $[\gamma]$ is an isotopy class of  simple closed curves  on $L$ and $k$ is an integer. 
When $X=\R^4$ and  $\omega$ is the standard symplectic form $\omega_0=dx_1\wedge dy_1+dx_2\wedge dy_2$, he
also applied Gromov's celebrated work in \cite{Gr} to show that, for any Lagrangian torus $L$, all the resulting symplectic  manifolds  are  symplectomorphic to $(\R^4, \omega_0)$. 
This does not occur in general; a Luttinger surgery often fails even to preserve homology. 
As a matter of fact, many new exotic small manifolds
are constructed via this surgery.
In this note, we observe that the Luttinger surgery preserves one basic invariant:
\begin{theorem}\label{kod}
The Luttinger surgery
preserves the symplectic Kodaira dimension.
\end{theorem}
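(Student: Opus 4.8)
The plan is to extract the small amount of data that a Luttinger surgery controls locally near the Lagrangian torus, and to combine it with the classification of symplectic $4$-manifolds of Kodaira dimension $-\infty$ and $0$. Recall that $\kappa(X,\omega)$ is read off a minimal model $X_{\min}$ from the signs of $K_{\omega}^{2}$ and $K_{\omega}\cdot[\omega]$, that $\kappa=-\infty$ holds precisely for rational and ruled manifolds, and that $\kappa$ is independent of the symplectic form. Two invariances come for free. A Luttinger surgery removes a tubular neighbourhood $\nu L\cong L\times D^{2}$ and glues back a copy of $T^{2}\times D^{2}$, so $\chi$ and $\sigma$ are unchanged and hence $c_{1}^{2}=2\chi+3\sigma=K_{\omega}^{2}$ is preserved. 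Moreover, since $L$ is Lagrangian one has $TX|_{L}\cong TL\oplus T^{*}L\cong TL\otimes_{\R}\C$, a trivial complex bundle because $L$ is a torus; hence $c_{1}(X)$ restricts to $0$ on $\nu L$ and may be represented by a closed $2$-form $\eta$ supported in $X\setminus\nu L$. The surgery leaves $(X,\omega)$ untouched on $X\setminus\nu L=X'\setminus\nu L'$, so $\eta$ also represents $c_{1}(X')$ there, and since $\omega$ is likewise unchanged, $K_{\omega'}\cdot[\omega']=-\int_{X'}\eta\wedge\omega'=-\int_{X}\eta\wedge\omega=K_{\omega}\cdot[\omega]$. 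Finally, the surgery is invertible: the core $T^{2}\times\{0\}$ of the reglued solid torus is a Lagrangian torus $L'\subset(X',\omega')$, and a suitable Luttinger surgery along it recovers $(X,\omega)$. It therefore suffices to show that $\kappa$ cannot strictly increase.

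Given these invariances, suppose first that $(X,\omega)$ is minimal and is neither rational nor ruled. For such a manifold $\kappa=2$ exactly when $K_{\omega}^{2}>0$, while if $K_{\omega}^{2}=0$ then $\kappa=0$ or $\kappa=1$ according as $K_{\omega}$ is a torsion class, and for a minimal manifold of non-negative Kodaira dimension the latter is detected by whether $K_{\omega}\cdot[\omega]$ vanishes. Thus in this range $\kappa$ is a function of the preserved quantities $K_{\omega}^{2}$ and the sign of $K_{\omega}\cdot[\omega]$, provided one also knows that $(X',\omega')$ is again minimal and neither rational nor ruled. This reduces the theorem to two points: (a) a Luttinger surgery on a rational or ruled manifold produces a rational or ruled manifold, so that by invertibility $\kappa=-\infty$, and hence also the non-rational-non-ruled range, is preserved; and (b) a Luttinger surgery can be arranged compatibly with blowing down, namely after an isotopy $L$ is disjoint from a maximal system of pairwise disjoint symplectic exceptional spheres, so the surgery descends to a Luttinger surgery of minimal models and, using invertibility once more, minimality is preserved.

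For (a) I would use that a symplectic $4$-manifold is rational or ruled if and only if it contains a symplectically embedded sphere of non-negative self-intersection, equivalently that some minimal model carries a symplectic form whose canonical class has negative pairing with it; one then either produces such a sphere disjoint from $L$, or transports the sign of this pairing through the surgery using the invariance above together with a count of how a blow-down changes it. I expect (a) and (b) to be the main obstacle. A Luttinger surgery can change $\pi_{1}$, $b_{1}$, and even the homology class $[L]$ drastically, so one cannot work at the level of the underlying smooth manifold and must rely on the genuinely local data ($\chi$, $\sigma$, and the restriction of $(K_{\omega},\omega)$ to the fixed complement of $L$) reinforced by the rigidity of Seiberg--Witten--Taubes theory; in particular, the technical heart is controlling $[L]$ against the exceptional classes --- for instance, when $b^{+}=1$ the orthogonal complement of $[\omega]$ is negative definite, so $[L]^{2}=0$ forces $[L]$ to be torsion and $L$ can then be pushed off the exceptional spheres. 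Assembling the minimal case with (a) and (b) and invoking invertibility for the reverse inequality gives $\kappa(X',\omega')=\kappa(X,\omega)$.
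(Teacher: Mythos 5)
There is a genuine gap, and it sits exactly at the technical heart of the theorem: your claim that the compactly supported form $\eta$ representing $c_{1}(X)$ ``also represents $c_{1}(\tilde X)$''. Knowing that $[\eta]$ and $c_{1}(\tilde X)$ agree after restriction to the common complement $X_L$, and that both vanish on the reglued $T^2\times D^2$, does not determine the class on $\tilde X$: by Mayer--Vietoris the difference is only constrained to lie in the image of $H^{1}(T^3;\R)\to H^{2}(\tilde X;\R)$, which in general is nonzero and can pair nontrivially with $[\tilde\omega]$, so the computation $K_{\tilde\omega}\cdot[\tilde\omega]=-\int_{\tilde X}\eta\wedge\tilde\omega$ does not follow. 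What must be shown is that the particular relative lift of $c_{1}$ determined on the $X$ side is carried by the gluing map to the corresponding lift on the $\tilde X$ side. This is precisely what the paper's Lemma 3.3 does: it chooses $\omega$-tamed almost complex structures matched by $g_k$, computes the transition function of the two trivializations of the canonical bundle over the overlap to be $\theta=1-ikf'(y_1)$, observes that $\arg\theta\in(-\frac{\pi}{2},\frac{\pi}{2})$ so the frame can be normalized to make $\theta=1$, and thereby produces a single surface $S\subset X_L$ Poincar\'e dual to $K_\omega$ in $X$ \emph{and} to $K_{\tilde\omega}$ in $\tilde X$. Your sentence asserts the conclusion of that lemma without an argument; note your claim that $K_\omega^2$ is preserved does not need any of this (it follows from $2\chi+3\sigma$), which is a sign that the $K\cdot[\omega]$ statement is where the real content lies.

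The surrounding reduction is also incomplete and heavier than necessary. For minimality (your point (b)) the needed input is Welschinger's theorem that any symplectic $-1$ class has an embedded symplectic representative disjoint from the Lagrangian torus, combined with the invertibility of the surgery; ``after an isotopy $L$ is disjoint from a maximal system of exceptional spheres'' is not justified --- one moves the spheres, not $L$, and even that requires Welschinger's result, which you do not invoke. Your point (a), that the surgery preserves the rational/ruled class, is essentially the paper's Theorem 1.2, which the paper \emph{deduces from} the invariance of $\kappa$; you leave it as an acknowledged obstacle, so as written the argument is not closed and risks circularity. Once the canonical-class lemma above is in hand, the classification detour is unnecessary: the paper blows down exceptional spheres lying in $X_L$, applies the same lemma to the minimal models, and reads off $K^2$ and $K\cdot[\omega]$ by integrating over $S$, with no appeal to the rational/ruled or Calabi--Yau classifications.
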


The symplectic Kodaira dimension of a symplectic 4-manifold $(X,\omega)$ is
defined by the products $K_{\omega}^2$ and $K_{\omega}\cdot [\omega]$,
where $K_{\omega}$ is the symplectic canonical class;
if $(X,\omega)$ is minimal, then
$$\kappa(X, \omega)=\left\{\begin{array}{ll}
-\infty & K_{\omega}^2<0\ or\ K_{\omega}\cdot [\omega] <0\\
0 & K_{\omega}^2=0\ and\ K_{\omega}\cdot [\omega]=0 \\
1 & K_{\omega}^2=0\ and\ K_{\omega}\cdot [\omega]>0\\
2 & K_{\omega}^2>0\ and\ K_{\omega}\cdot [\omega] >0
                  \end{array}\right.$$
For a general symplectic  4-manifold, the Kodaira dimension  is
defined as the Kodaira dimension of any of its minimal models. According to \cite{L},
$\kappa(X, \omega)$ is  independent of the choice
of symplectic form $\omega$ and hence is denoted by $\kappa(X)$.

Theorem \ref{kod} is related to a question of Auroux in \cite{A} (see Remarks \ref{omega} and \ref{44}).
Furthermore, together with the elementary analysis of the homology change,
the invariance of $\kappa$ implies that

\begin{theorem}\label{int2}
Let $(X, \omega)$ be a symplectic 4-manifold with $\kappa(X)=-\infty$ and $(\tilde{X}, \tilde{\omega})$ be constructed from $(X, \omega)$ via a Luttinger surgery. 
Then $(\tilde{X},\tilde{\omega})$ is symplectomorphic to $(X,\omega)$.
\end{theorem}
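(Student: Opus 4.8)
The plan is to start from Theorem~\ref{kod}. Since a Luttinger surgery sends $(X,\omega)$ to a symplectic $4$-manifold $(\tilde{X},\tilde{\omega})$ with the same Kodaira dimension, we have $\kappa(\tilde{X})=\kappa(X)=-\infty$. From here I would invoke two pieces of known structure theory: first, that a symplectic $4$-manifold $Y$ with $\kappa(Y)=-\infty$ is a rational or ruled surface, and in particular has $b^+(Y)=1$ and torsion-free $H_2$; second, that on a rational or ruled surface the symplectic form is unique up to symplectomorphism once its cohomology class is prescribed (McDuff, Lalonde--McDuff, Li--Liu). Granting these, it is enough to prove that a Luttinger surgery on a manifold with $\kappa=-\infty$ preserves the invariants $\chi$, $\sigma$, $b_1$ and the parity of the intersection form --- which then forces $\tilde{X}$ to be diffeomorphic to $X$ --- and finally that the diffeomorphism can be chosen to carry $\tilde{\omega}$ to $\omega$.

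The main work is the homological analysis. Let $\nu(L)\cong T^2\times D^2$ be a Weinstein neighbourhood of $L$ and $M=X\setminus\mathrm{int}\,\nu(L)$, so $X=M\cup_{T^3}\nu(L)$; the Luttinger surgery replaces this with $\tilde{X}=M\cup_{\phi}\nu(L)$ for a regluing $\phi$ of $T^3=\partial M$, and $\tilde{L}:=T^2\times\{0\}$ is again a Lagrangian torus whose complement is the \emph{same} $M$. As $T^2\times D^2$ has zero Euler characteristic and contributes zero to the signature (Novikov additivity over $T^3$), $\chi$ and $\sigma$ are unchanged. For $b_1$ I would prove a meridian--nonvanishing statement: if $T\subset Y$ is a Lagrangian torus in a symplectic $4$-manifold with $\kappa(Y)=-\infty$, then $[T]^2=0$ (trivial normal bundle) and $\int_T\omega_Y=0$ (Lagrangian); since $b^+(Y)=1$ and $[\omega_Y]^2>0$, a square-zero class orthogonal to $[\omega_Y]$ must vanish, so $[T]=0$ in $H_2(Y;\R)$, hence $[T]=0$ in $H_2(Y;\Z)$ by torsion-freeness. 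On the other hand the long exact sequence of $(Y,M)$ with $H_*(Y,M;\Q)\cong H_{*-2}(T^2;\Q)$ gives $H_1(Y;\Q)\cong H_1(M;\Q)/\langle i_*\mu\rangle$, where $\mu$ is the meridian of $T$ and $\langle i_*\mu\rangle$ is the image of $\partial\colon H_2(Y,M;\Q)\cong\Q\to H_1(M;\Q)$; this image is nonzero precisely when the generator of $H_2(Y,M;\Q)$ is not in the image of $H_2(Y;\Q)$, i.e.\ precisely when $[T]\neq 0$ in $H_2(Y;\Q)$. Combining the two facts, $i_*\mu\neq 0$ and therefore $b_1(Y)=b_1(M)-1$. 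Applying this to $Y=X$ with meridian $m$, and to $Y=\tilde{X}$ whose meridian is the curve $m+k\gamma'$ (with $\gamma'$ a Lagrangian push-off of the surgery curve), both having the same complement $M$, yields $b_1(X)=b_1(M)-1=b_1(\tilde{X})$. Hence all rational Betti numbers agree.

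For the parity it suffices to show that Luttinger surgery preserves the spin condition here. The tangent bundle of $T^2\times D^2$ is trivial, so $w_2$ restricts to zero on $\nu(L)$; and since $[L]=0$ in $H_2(X;\Z)$, the map $H^2(X,M;\Z/2)\to H^2(X;\Z/2)$ is zero, so the restriction $H^2(X;\Z/2)\to H^2(M;\Z/2)$ is injective. Because $TX|_M=TM$, this gives $w_2(X)=0\iff w_2(M)=0$, and likewise $w_2(\tilde{X})=0\iff w_2(M)=0$, so $X$ is spin if and only if $\tilde{X}$ is. This settles the only remaining ambiguity --- $S^2\times S^2$ versus $\mathbb{CP}^2\#\overline{\mathbb{CP}^2}$, and the trivial versus the twisted $S^2$-bundle over $\Sigma_g$ --- so $\tilde{X}$ is diffeomorphic to $X$. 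Finally, $\tilde{\omega}$ and $\omega$ agree on $M$, and $H^2(-;\R)$ injects into $H^2(M;\R)$ for both manifolds (again because the relevant Lagrangian torus is rationally null-homologous), so under the identification their cohomology classes correspond; together with the equality of volumes $\int_{\tilde{X}}\tilde{\omega}^2=\int_X\omega^2$, the uniqueness of symplectic structures on rational and ruled surfaces upgrades the diffeomorphism to a symplectomorphism $(\tilde{X},\tilde{\omega})\to(X,\omega)$.

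I expect the main obstacle to be exactly this homological bookkeeping: establishing that the meridian survives in $H_1(M;\Q)$ (hence the invariance of $b_1$) and that the parity of the intersection form is unchanged, and then checking that the resulting diffeomorphism genuinely identifies $\tilde{\omega}$ with $\omega$ rather than with a mere deformation or rescaling of it. The structure theory of symplectic $4$-manifolds with $\kappa=-\infty$ and the uniqueness of their symplectic forms are quoted as known.
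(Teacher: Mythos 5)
Your overall strategy (invariance of $\kappa$ plus homological bookkeeping plus the classification of symplectic forms on rational and ruled surfaces) is the same as the paper's, and the diffeomorphism part of your argument is essentially sound: the computations of $\chi$, $\sigma$, the vanishing of $[L]$ from $[L]^2=0$, $\int_L\omega=0$, $b^+=1$ and torsion-freeness, the resulting survival of the meridian in $H_1(M;\Q)$, and the $w_2$ argument for parity all work (and your spin argument is a legitimate alternative to the paper's route through an odd square class in the complement). One slip to fix: you write that the image of $\partial\colon H_2(Y,M;\Q)\to H_1(M;\Q)$ is nonzero ``precisely when $[T]\neq 0$''; the correct statement is precisely when $[T]=0$ (the map $H_2(Y;\Q)\to H_2(Y,M;\Q)\cong\Q$ is $A\mapsto A\cdot[T]$, so it is onto exactly when $[T]\neq0$). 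The conclusion you then draw, $i_*\mu\neq 0$, is the right one, so this is only a misstatement, not a broken step.

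The genuine gap is in the last step, where you upgrade the diffeomorphism to a symplectomorphism. Knowing that $H^2(X;\R)$ and $H^2(\tilde X;\R)$ both inject into $H^2(M;\R)$ and that $\omega=\tilde\omega$ on $M$ gives at best an abstract isomorphism of cohomology groups matching $[\omega]$ with $[\tilde\omega]$; it does not produce a \emph{diffeomorphism} $\tilde X\to X$ inducing that identification (a diffeomorphism equal to the identity on $M$ exists only if the regluing is trivial, which is the whole point at issue), and the classification diffeomorphism you obtained from $(\chi,\sigma,b_1,\mathrm{spin})$ has no reason to carry $[\omega]$ to $[\tilde\omega]$. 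Since the uniqueness theorems of Lalonde--McDuff/Li--Liu compare forms on the same manifold whose classes agree (or lie in the same $\mathrm{Diff}$-orbit), you must exhibit the matching of classes up to the diffeomorphism action, i.e.\ match the classifying numerical data. Equality of volumes is not enough: on $S^2\times S^2$ or a ruled surface the forms are classified by the pair (fiber area, volume), not by the volume alone, and for non-minimal manifolds one also needs the areas of exceptional classes. This is exactly where the paper does extra work: it first reduces to the minimal case via Proposition \ref{minimality} (blowing down $-1$ spheres kept disjoint from $L$ by Welschinger's theorem), and then, for ruled surfaces, verifies the remaining invariant $[\omega](F)$ by again using Welschinger's theorem to choose the fiber sphere disjoint from $L$, so that $F$ survives into $X_L$ and its area is manifestly unchanged. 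Your proposal, which deliberately avoids the reduction to minimality, makes this matching harder rather than easier; as written, the symplectomorphism conclusion does not follow.
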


For minimal symplectic manifolds of Kodaira dimension zero, i.e., symplectic Calabi-Yau surfaces,
 we conclude that the Luttinger surgery is a symplectic CY surgery.
 Moreover, together with the homology classification of such manifolds in \cite{L4}, we have

\begin{theorem}\label{int3}
Suppose $(X, \omega)$ is a symplectic 4-manifold with $\kappa(X)=0$ and $\chi (X)>0$. 
If  $(\tilde{X}, \tilde{\omega})$ is constructed from $(X, \omega)$ under a Luttinger surgery, then $X$ and $\tilde{X}$ have the same integral homology type. 
\end{theorem}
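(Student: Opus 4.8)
The plan is to combine the invariance of the Kodaira dimension (Theorem~\ref{kod}) with the classical invariants that a torus surgery manifestly controls, and then feed the output into the homology classification of symplectic Calabi--Yau surfaces from \cite{L4}.

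First I would record that a Luttinger surgery is a torus surgery: it excises a tubular neighborhood $\nu(L)\cong T^2\times D^2$ of the Lagrangian torus and reglues a copy of $T^2\times D^2$ by a self-diffeomorphism of the boundary three-torus. Therefore the operation preserves the Euler characteristic (by Mayer--Vietoris, since $\chi(T^3)=\chi(T^2\times D^2)=0$) and the signature (by Novikov additivity, since the piece removed and the piece glued in are diffeomorphic). Hence $\chi(\tilde X)=\chi(X)>0$ and $\sigma(\tilde X)=\sigma(X)$, while $\kappa(\tilde X)=\kappa(X)=0$ by Theorem~\ref{kod}.

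Next, since $\chi(X)>0$, the classification of symplectic Calabi--Yau surfaces in \cite{L4} shows that $X$ has the integral homology of the $K3$ surface ($\chi=24,\ \sigma=-16$) or of the Enriques surface ($\chi=12,\ \sigma=-8$); in particular $\chi(X)+\sigma(X)\in\{8,4\}$. The key point is then to upgrade the bare fact $\kappa(\tilde X)=0$ to the statement that $\tilde X$ is itself minimal. I would write $\tilde X=\tilde W\#\tilde k\,\overline{\mathbb{CP}^2}$ with $\tilde W$ a symplectic Calabi--Yau surface and $\tilde k\geq 0$; as a blow-up changes $(\chi,\sigma)$ by $(1,-1)$, this gives $\chi(\tilde W)+\sigma(\tilde W)=\chi(\tilde X)+\sigma(\tilde X)=\chi(X)+\sigma(X)\in\{8,4\}$. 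By \cite{L4} the value of $\chi+\sigma$ on a symplectic Calabi--Yau surface is $0$ (a $T^2$-bundle over $T^2$), $4$ (Enriques homology), or $8$ ($K3$ homology), and these three values are distinct; hence $\tilde W$ has the same homology type as $X$, so $\chi(\tilde W)=\chi(X)$ and therefore $\tilde k=\chi(\tilde X)-\chi(\tilde W)=0$. Thus $\tilde X=\tilde W$ is minimal with the integral homology of $K3$ or of Enriques, and since $\chi(\tilde X)=\chi(X)$ picks out the same one as for $X$, the manifolds $X$ and $\tilde X$ have the same integral homology type.

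The step I expect to be the main obstacle is precisely this passage from ``$\kappa(\tilde X)=0$'' to ``$\tilde X$ carries no blow-up factor'': the classification of \cite{L4} pins down the \emph{integral} homology only on the $b_1=0$ stratum --- the $K3$ and Enriques types --- whereas on the $T^2$-bundle stratum it controls only the rational homology, so one must know in advance that $\tilde X$ does not land there. The Euler characteristic and signature bookkeeping above is what supplies this, but it works only because the hypothesis $\chi(X)>0$ rules out the $T^2$-bundle case and because $\chi$ and $\sigma$ are carried unchanged through the surgery; this is the reason $\chi(X)>0$, and not merely $\kappa(X)=0$, is the operative hypothesis, and it is the place where the classification must be applied with care.
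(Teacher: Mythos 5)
Your argument is correct and follows the same overall strategy as the paper's: both rest on the invariance of $\chi$ and $\sigma$ under torus surgery (Proposition \ref{compare}), the invariance of $\kappa$ (Theorem \ref{kod}), and the homological classification of symplectic Calabi--Yau surfaces from \cite{L4} (Theorem \ref{cy2} together with the table). The genuine difference is how minimality of $\tilde X$ is handled. The paper invokes Proposition \ref{minimality} (proved via Welschinger's theorem that a symplectic $-1$ sphere can be pushed off $L$), packaged as Proposition \ref{cy1}: a Luttinger surgery takes a symplectic CY surface to a symplectic CY surface, so $\tilde X$ falls directly under Theorem \ref{cy2}. You avoid minimality preservation altogether and instead rule out exceptional classes in $\tilde X$ numerically, writing $\tilde X=\tilde W\sharp\,\tilde k\,\overline{\C\PP^2}$ and using that $\chi+\sigma$ is a blow-up invariant taking the distinct values $0,4,8$ on the three strata of the classification, which forces $\tilde k=0$. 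That bookkeeping is valid and makes this particular theorem slightly more self-contained, though it delivers less than Proposition \ref{cy1} (which also powers Theorem \ref{int2} and the framing results of Section 5). One caveat you should make explicit: when you apply the integral classification to $X$ itself (``$X$ has the integral homology of K3 or Enriques''), you are tacitly treating $X$ as minimal; the hypotheses $\kappa(X)=0$, $\chi(X)>0$ alone do not give this (a blown-up torus bundle over a torus satisfies them, and there the conclusion can fail, since a Luttinger surgery in the $T^4$ factor changes $b_1$). The paper's one-line proof makes the same tacit reading through Proposition \ref{cy1}, so under the intended interpretation that $X$ is a symplectic CY surface with $\chi>0$ your proof, like the paper's, is complete --- but you should state that reduction or hypothesis explicitly rather than worry about non-minimality only on the $\tilde X$ side.
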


In fact, we conjecture that $\tilde{X}$ and $X$ in Theorem \ref{int3} are diffeomorphic to each other.
For symplectic CY surfaces with $\chi =0$, the only known examples are torus bundles over torus.
We conjecture that they all can be obtained from $T^4$ via Lutttinger surgeries  (Conjecture \ref{torus}).

Theorems \ref{int2} and \ref{int3} provide topological constraints, phrased in terms of topological preferred framings (see Definition \ref{tprefer}),  on the existence of exotic Lagrangian tori in such manifolds.
\begin{theorem}\label{int4}
Let  L be a Lagrangian torus in $(X, \omega)$. 
If $\kappa (X)=-\infty$, or $L$ is null-homologous, $\kappa(X)=0$ and $\chi(X)>0$, then the Lagrangian framing of $L$ is topological preferred.
In particular, the invariant $\lambda (L)$ in \cite{FS} vanishes whenever it is defined.
\end{theorem}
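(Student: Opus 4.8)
\textit{Proof proposal.}

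The plan is to derive the statement from Theorems \ref{int2} and \ref{int3} together with a homological analysis of Luttinger surgery. By Definition \ref{tprefer}, the Lagrangian framing of $L$ being topologically preferred is equivalent to a homological condition: for every embedded curve $\gamma\subset L$, the Lagrangian pushoff $\gamma^{Lag}$ into the complement $X_0:=X\setminus\nu(L)$ must represent the distinguished (``topologically preferred'') class in $H_1(X_0;\Z)$, and the signed discrepancy of $[\gamma^{Lag}]$ from this class --- an integer $\lambda(\gamma)$, assembling into the Fintushel--Stern invariant $\lambda(L)$ --- measures the failure. So it suffices to show $\lambda(\gamma)=0$ for all $\gamma$, and the plan is to argue by contradiction: if $\lambda(\gamma)\neq 0$ for some $\gamma$, a suitable Luttinger surgery along $L$ will change the integral homology of $X$, contradicting Theorem \ref{int2} when $\kappa(X)=-\infty$ and Theorem \ref{int3} when $L$ is null-homologous, $\kappa(X)=0$ and $\chi(X)>0$.

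First I would record the homology of a single Luttinger surgery. Let $\mu$ be the meridian of $L$ and let $\gamma,\delta$ be a basis of $H_1(L;\Z)$. Capping off $\nu(L)=T^2\times D^2$ inside $X_0$ makes $\mu$ bound, so $H_1(X;\Z)=H_1(X_0;\Z)/\langle\mu\rangle$; the Luttinger surgery with datum $([\gamma],k)$ instead reglues $T^2\times D^2$ so that $\mu+k[\gamma^{Lag}]$ bounds, and a Mayer--Vietoris computation for $\tilde X=X_0\cup(T^2\times D^2)$ gives $H_1(\tilde X;\Z)=H_1(X_0;\Z)/\langle\mu+k[\gamma^{Lag}]\rangle$, with the analogous analysis for $H_2$ using that $[\gamma^{Lag}]$ and $[\delta^{Lag}]$ enter the regluing map. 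Writing $[\gamma^{Lag}]$ as its topologically preferred representative plus $\lambda(\gamma)\mu$, the surgered homology becomes an explicit function of $\lambda(\gamma)$ that returns $H_*(X;\Z)$ exactly at the trivial datum.

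Second, I would let $k$ vary. When $\kappa(X)=-\infty$, Theorem \ref{int2} forces $\tilde X$ to be diffeomorphic to $X$, hence $H_1(\tilde X;\Z)\cong H_1(X;\Z)$, for all $\gamma$ and all $k\neq 0$; comparing the orders of the pertinent torsion subgroups for $k=1$ and $k=-1$ (the model situation being $X=\mathbb{CP}^2$, where $H_1(X_0;\Z)=\Z\langle\mu\rangle$ and $H_1(\tilde X;\Z)=\Z/(1+k\lambda(\gamma))$) forces $\lambda(\gamma)=0$ for every $\gamma$; one also checks here that the normal bundle of $L$ is still trivial --- it is, being the cotangent bundle of $L$ --- so the computation goes through without assuming $[L]=0$. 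When $L$ is null-homologous, $\kappa(X)=0$ and $\chi(X)>0$, Theorem \ref{int3} supplies only an isomorphism of integral homology, so I would use the splitting $H_1(X_0;\Z)\cong H_1(X;\Z)\oplus\Z\langle\mu\rangle$ afforded by $[L]=0$ (and the companion statement for $H_2$) to reduce the surgered homology to a manifestly $\lambda(\gamma)$-dependent expression whose equality with $H_*(X;\Z)$ for all $k$ again forces $\lambda(\gamma)=0$. In both cases the Lagrangian framing is topologically preferred, and $\lambda(L)$, being the collection of the $\lambda(\gamma)$, vanishes.

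The step I expect to be the main obstacle is the case $\kappa(X)=0$: because Theorem \ref{int3} yields only an abstract isomorphism of integral homology rather than a diffeomorphism, the argument must confirm that integral homology --- including the torsion in $H_1$ and $H_2$ --- is already fine enough to detect a nonzero framing discrepancy, and that this rules out \emph{every} nonzero value of $\lambda(\gamma)$ rather than merely $\lambda(\gamma)=\pm1$; getting the Mayer--Vietoris and Poincar\'e duality bookkeeping precisely right there is the delicate part. A secondary point to pin down is the exact compatibility between Definition \ref{tprefer} and the effect of Luttinger surgery on $H_1(X_0;\Z)$, so that ``topologically preferred'' is literally the vanishing of all the $\lambda(\gamma)$.
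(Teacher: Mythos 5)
Your overall strategy is the same as the paper's: exploit the surgery formula $H_1(\tilde X;\Z)\cong H_1(Y;\Z)/\langle i_1^\Z([\mu]+k[\gamma_\varphi])\rangle$ (Lemma \ref{meridian}(3)) together with the homology invariance supplied by Theorem \ref{int2} (a diffeomorphism when $\kappa=-\infty$) and by Theorem \ref{int3} combined with the classification in Theorem \ref{cy2} when $\kappa=0$, $\chi>0$, and let $\gamma$ and $k$ vary to force the Lagrangian framing to be preferred. That is exactly the content of Propositions \ref{r1k}, \ref{1k} and \ref{prefer}, assembled in Corollary \ref{ltp1} and Propositions \ref{ltp2}(1), \ref{ltp3}(1). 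However, two points that you treat as bookkeeping are genuine gaps rather than details.

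First, Definition \ref{tprefer} is an $H_2$-condition: the longitudinal torus $L_\varphi$ must lie in $\ker i_2^\Z$, and the notion is only defined when $[L]=0$. Your reformulation, that each pushoff $\gamma^{Lag}$ differs from a ``distinguished'' class by $\lambda(\gamma)\mu$, already presupposes the existence of a preferred framing (Proposition \ref{exist}(1), which needs $[L]=0$ and a Seifert $3$-chain) and the duality between $\ker i_1$ and $\ker i_2$ under the intersection pairing on $Z=T^3$ (Lemma \ref{kernel}, Lemma \ref{zker}, Propositions \ref{rexist} and \ref{exist}(2)). Moreover this translation is an equivalence only rationally (Proposition \ref{rexist}); showing that the $H_1$-discrepancies vanish therefore yields only a \emph{rational} topological preferred framing, and upgrading to the integral condition of Definition \ref{tprefer} requires the additional primitivity/torsion-freeness argument in the proof of Proposition \ref{1k}. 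You flag this compatibility as a ``secondary point,'' but it is the definitional backbone of the statement. Second, in the $\kappa=-\infty$ case your claim that the computation ``goes through without assuming $[L]=0$'' cannot stand: the conclusion is not even defined unless $L$ is null-homologous, and your discrepancy decomposition needs it too. The paper closes this by observing that $b^+=1$ forces $[L]$ to be torsion and that rational and ruled surfaces have torsion-free $H_2(X;\Z)$, hence $[L]=0$; this step must be added (the remark about the normal bundle is beside the point, since triviality is automatic for any Lagrangian torus). A minor further caveat: comparing only $k=\pm1$ need not detect a nonzero $\mu$-component once $H_1(Y;\Z)$ itself has torsion; the paper's argument uses arbitrarily many values of $k$, since a nonzero component would produce infinitely many non-isomorphic torsion quotients, contradicting the fixed isomorphism type of $H_1(X;\Z)$.
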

 
The organization of this paper is as follows. In section 2, the construction of the Luttinger surgery is reviewed.
We also discuss the Lagrangian fibrations as the first application of this surgery.
In section 3, we establish the invariance of the symplectic Kodaira dimension, which is the main result of this note.
In section 4, we prove Theorems \ref{int2} and \ref{int3}.
In section 5, we apply these two theorems to derive constraints on framings of Lagrangian tori in symplectic 4-manifolds with non-positive Kodaira dimension. 

We are grateful to the Referee for many useful suggestions which improve the exposition.
The first author would like to thank the following scholars for their insightful discussions and suggestions: Anar Akhmedov, Inanc Baykur, Joel Gomez, Robert Gompf, Conan Leung, Weiwei Wu and Weiyi Zhang. The second author is grateful to the support of NSF.

%%%%%%%%%%%%%%%%%%%%%%%%%%%%%%%%%%%%%%%%%%%%%%%%%%%

%%%%%%%%%%%%%%%%%%%%%%%%%%%%%%%%%%%%%%%%%%%%%%%%%%%

\section{Luttinger surgery}

In this section, we describe the Luttinger surgery following \cite{ADK}.  
Applications to Lagrangian fibrations are also discussed.
We assume all manifolds are oriented.
%%%%%%%%%%%%%%%%%%%%%%%%%%%%%%%%%%%%%%%%%%%

\subsection{Construction}\label{construction}
Topologically, Luttinger surgery is a framed torus surgery. 
We start with a general description of framed torus surgeries. 
Let $X$ be a smooth 4-manifold and $L\subset X$ an embedded 2-torus with trivial normal bundle.
Then let $U$ be a tubular neighborhood of $L$.
If we assume $Y=X-U$ is the complement of $U$, $Z=\partial Y=\partial \overline{U}$ and $g:Z\rightarrow Z$ is a diffeomorphism,
a new manifold $\tilde{X}$ can be constructed by cutting $U$ out of $X$ and gluing it back to $Y$ along $Z$ via $g$:
\begin{align}\label{surgery}
\tilde{X}=Y\cup_gU.
\end{align}
Such surgery is called a $torus$ $surgery$.

It is often more explicit to describe this process via a framing of $L$.

\begin{definition}
Let $X$, $L$, $U$ and $Z$ be given as above.
A diffeomorphism $\varphi :U\rightarrow T^2\times D^2$ is called a framing of $L$ if $\varphi^{-1}(T^2\times 0)=L$.
Let $\pi_1: T^2\times D^2\rightarrow T^2$ be the projection.
For any $\gamma\subset L$ and $z\in \partial D^2$, the lift $\gamma_{\varphi}=\varphi^{-1}(\pi_1(\gamma )\times z)$ of $\gamma$ in $Z$ is called a longitudinal curve of $\varphi$.
Let
$$\partial \varphi :Z\rightarrow \partial (\overline{T^2\times D^2})\cong T^2\times S^1$$
be the induced map.
Two framings $\varphi _1, \varphi_2: U\rightarrow T^2\times D^2$ are smoothly isotopic to each other if
the map
$$\partial \varphi_2\circ (\partial \varphi_1)^{-1}:T^2\times S^1\rightarrow T^2\times S^1$$
is homotopic to the identity map.
\end{definition}

$\partial \varphi$ induces a $S^1$-bundle structure on $Z$.
A positive oriented fiber $\mu$ of $Z$ is called a $meridian$ of $L$.
For $\tilde{X}$ in \eqref{surgery}, we will use $\tilde{L}$ to denote the torus $L\subset U\subset \tilde{X}$.
Notice that $\tilde{L}$ also inherits a framing $\tilde{\varphi}$ and its meridian $\tilde{\mu}\subset Z$ satisfies
$$[\tilde{\mu}]=p[\mu]+k[\gamma_\varphi],$$
in $H_1(Z;\Z)$. 
Here $\gamma_\varphi$ is a longitudinal curve of $\varphi$ and $p,k$ are coprime integers.
The diffeomorphism type of $\tilde{X}$ only depends on the class $[\tilde{\mu}]$.
It is called a \textit{generalized logarithmic transform} of $X$ along $(L,\varphi ,\gamma)$ with multiplicity $p$ and auxiliary multiplicity $k$, or of type $(p, k)$ (see \cite{GoS}), and denoted as $X_{(L, \varphi, \gamma, p, k)} $.
For brevity, we will call it a $(p,k)$-surgery.

If $X$ is a symplectic 4-manifold, Weinstein's
theorem states that there is a canonical framing for any Lagrangian torus of $X$.
\begin{definition}
Let $X$ be a symplectic 4-manifoldand and $L$ a Lagrangian torus of $X$.
A framing $\varphi$ of $L$ is called a Lagrangian framing if $\varphi^{-1}(T^2\times z)$ is a Lagrangian submanifold of $X$ for any $z\in D^2$.
\end{definition}

Topologically, a Luttinger surgery is a $(1, k)$-surgery with respect to a Lagrangian framing.
In order to deal with the sympelctic structure, it is more convenient to use a square neighborhood  rather than the disk neighborhood of $L$ as above. 

Express the cotangent bundle $T^*T^2$  as
$$\{(x_1, x_2,y_1,y_2)\in\R^4\}/(x_1=x_1+1, x_2=x_2+1)$$
equipped with the canonical 2-form
$$\omega_0=dx_1\wedge dy_1+dx_2\wedge dy_2$$
and let
$$U_r= \{(x_1,x_2, y_1, y_2)\in T^*T^2|-r<y_1<r, -r<y_2<r\},$$
There exists a tubular neighborhood $U$ of $L$ and a symplectomorphism
$\varphi : (U,\omega )\rightarrow (U_r,\omega_0)$
for small $r$ which satisfies
$$  \varphi (L)=T^2\times (0, 0).$$
In addition, given a simple closed curve $\gamma$ on $L$,
we can choose the coordinates $x_1,x_2$ of $T^2$ such that
$$\varphi(\gamma) =\{(x_1,0,0,0)\mid x_1\in \R/\Z\}.$$

Let $A_{s,t}=U_s-\overline{U_t}\ (s>t)$
be an annular region and $f:(-r,r)\rightarrow [0,1]$ be a smooth increasing function
such that $f (t)=0$ for $t\leq -\frac{r}{3}$ and $f(t)=1$
for $t\geq \frac{r}{3}$.
For any integer $k$, we can define a diffeomorphism $h_k$:
$$\begin{array}{ccl}
 A_{r,\frac{r}{2}} & \rightarrow & A_{r,\frac{r}{2}}\\
(x_1,x_2,y_1,y_2) & \mapsto &
\left\{\begin{array}{ll} (x_1+kf (y_1),x_2,y_1,y_2) & y_2\geq\frac{r}{2}\\
(x_1,x_2,y_1,y_2) & \hbox{otherwise}
\end{array}\right.
\end{array}$$

Observe that
\begin{equation}\label{hk}
h_k^*(\omega_0)=\omega_0,
\end{equation}
which follows from the relation
$$(dx_1+kf'(y_1)dy_1)\wedge dy_1+dx_2\wedge dy_2=\omega_0$$
for $y_2\geq\frac{r}{2}$.

Let $X_L=X-\overline{\varphi^{-1}(U_{\frac{r}{2}})}$
and define $g_k=\varphi^{-1}\circ h_k\circ\varphi$ via the following diagram
$$\begin{array}{rcccl}
X_L\supset & U-\overline{\varphi^{-1}(U_{\frac{r}{2}})} & \stackrel{g_k}{\rightarrow} & U-\overline{\varphi^{-1}(U_{\frac{r}{2}})} & \subset U\\
&\downarrow\varphi &  & \varphi \downarrow\\
& A_{r,\frac{r}{2}} & \stackrel{h_k}{\rightarrow} & A_{r,\frac{r}{2}}\\
\end{array},$$
then we can construct a new smooth manifold
$$\tilde{X}:=X_L\cup_{g_k} U.$$
Notice that, by \eqref{hk}, we have $g_k^*(\omega)=\omega$. 
Thus $\tilde{X}$ carries a symplectic form $\tilde{\omega}$ induced by $\omega$.
This process is called a \textit{Luttinger surgery} (along the Lagrangian torus $L$).

\begin{center}
\epsfbox{lk.1}
\hspace{2cm}
\epsfbox{lk2.1}
\end{center}

We know that

 \begin{lemma}\label{framing}\cite{FS}
Any two Lagrangian framings of a Lagrangian torus are smoothly isotopic to each other.
\end{lemma}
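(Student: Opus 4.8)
The plan is to establish Lemma \ref{framing} by reducing the isotopy classification of Lagrangian framings to a statement about the mapping class group of $T^2 \times S^1$, and then to use the rigidity coming from the Lagrangian condition to pin down the relevant boundary gluing up to the allowed ambiguity. First I would recall that any two Weinstein tubular neighborhoods of a fixed Lagrangian torus $L$ are symplectomorphic by an ambient symplectomorphism isotopic to the identity (the parametrized Weinstein neighborhood theorem), so the set of Lagrangian framings $\varphi: U \to T^2 \times D^2$ up to this ambient freedom is a torsor under the group of symplectomorphisms of $(T^2 \times D^2, \omega_0)$ that preserve the zero section $T^2 \times 0$ and the Lagrangian fibration by the $T^2 \times z$. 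The key point is that such a symplectomorphism, restricted to the boundary, acts on $H_1(T^2 \times S^1; \Z)$ in a way constrained by two facts: it must preserve the meridian class $[\mu]$ (the fiber of the $D^2$ direction, which is the boundary of a symplectic disk transverse to $L$) up to sign, and it must preserve the Lagrangian longitudinal $2$-plane spanned by the lifts of curves on $L$.

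Second, I would make this concrete in the $U_r$ model used in the construction above. A symplectomorphism of $(U_r, \omega_0)$ fixing $T^2 \times 0$ and carrying the Lagrangian square fibration to itself induces on the boundary torus bundle a bundle automorphism; composing two framings gives $\partial\varphi_2 \circ (\partial\varphi_1)^{-1}: T^2 \times S^1 \to T^2 \times S^1$, and I must show this is homotopic to the identity. The homotopy class of a self-map of $T^2 \times S^1$ is detected by its action on $\pi_1 \cong \Z^3$ together with a choice of basepoint image, and since both framings send $L$ to $T^2 \times 0$ compatibly and send meridians to meridians, the induced matrix on $H_1$ is block upper triangular: it fixes the $H_1(L) = \Z^2$ summand (both are Lagrangian framings of the \emph{same} torus, so the longitudes are homologous in $Z$ up to the meridian), fixes $[\mu]$, and the only remaining freedom is an integer shift $[\text{longitude}] \mapsto [\text{longitude}] + n[\mu]$ recording a "twisting." The Lagrangian condition kills this shift: the longitudinal lift $\gamma_\varphi$ of a curve $\gamma \subset L$ lies on a Lagrangian torus $\varphi^{-1}(T^2 \times z)$, and two such lifts for two Lagrangian framings bound a Lagrangian annulus (or cylinder) in the complement, which forces $n = 0$ by a flux/area argument — the integral of $\omega$ over any such cylinder vanishes, and this integral computes exactly the twisting coefficient $n$ against $[\mu]$.

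The main obstacle I anticipate is precisely this last step: showing that the relative twisting of two Lagrangian framings is zero rather than merely that it is a well-defined integer. The naive homotopy-theoretic argument only shows the framings agree on $H_1$ up to the meridian ambiguity, which is the ambiguity that distinguishes different $(p,k)$-surgeries in the first place — so something genuinely symplectic must enter. I would handle it by the flux homomorphism: the space of Lagrangian tori $\varphi^{-1}(T^2 \times z)$ as $z$ varies traces out a Lagrangian isotopy, and the Lagrangian framing is rigid because the $\omega$-orthogonal complement of $TL$ determines the normal framing up to the symplectic structure; more precisely, given two Lagrangian framings one interpolates between their Weinstein models and the obstruction to isotoping one to the other, measured in $H^1(T^2; \R)/H^1(T^2;\Z)$ via flux, vanishes because both have trivial flux (they both come from the \emph{same} symplectic normal geometry of $L$). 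I would cite \cite{FS} for the clean statement and only sketch that the Lagrangian condition rules out the twisting, since a complete proof of the flux vanishing is somewhat technical; alternatively one can argue directly that a symplectomorphism of $(T^2 \times D^2, \omega_0)$ fixing the zero section and preserving the Lagrangian fibration is, after an isotopy through such symplectomorphisms, of the form $(x, y) \mapsto (\psi(x), ({}^t d\psi)^{-1} y)$ for a diffeomorphism $\psi$ of $T^2$ isotopic to the identity, whose boundary restriction is manifestly homotopic to the identity on $T^2 \times S^1$.
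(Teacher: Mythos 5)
The paper offers no proof of this lemma at all---it is imported verbatim from \cite{FS}---so your sketch has to stand on its own, and at the decisive step it does not. The reduction is fine: homotopy classes of self-maps of $Z\cong T^3$ are detected on $H_1$, the meridian class is automatically preserved by any framing comparison, and (granting that the two framings induce isotopic identifications $L\to T^2$, a point worth stating, since composing a Lagrangian framing with $A\times\mathrm{id}$, $A\in SL(2,\Z)$, is again a Lagrangian framing) the entire content is that each relative twisting integer $n_\gamma$ in $[\gamma_{\varphi_2}]=[\gamma_{\varphi_1}]+n_\gamma[\mu]$ vanishes; you rightly note that something genuinely symplectic must enter there. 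But the symplectic input you supply fails. The existence of a ``Lagrangian annulus in the complement'' joining the two lifts is never justified and is essentially equivalent to what is to be proved: an annulus in $U\setminus L$ between the lifts, Lagrangian or not, already gives $n_\gamma=0$. The area claim is false: by Stokes, the integral of $\omega$ over a cylinder between the lifts equals the difference of their actions $\int\lambda$ for a primitive $\lambda$ of $\omega$ on $U$, which in the model $T^*T^2$ is generically nonzero and does not compute $n_\gamma$. The flux fallback (``both have trivial flux because they come from the same symplectic normal geometry'') is circular as stated, and the closing normal-form argument presupposes that $\varphi_2\circ\varphi_1^{-1}$ is a symplectomorphism preserving the product fibration---but a Lagrangian framing is only a diffeomorphism with Lagrangian fibers, not a symplectomorphism, so neither hypothesis is available.

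The missing idea, which is how \cite{FS} (and, earlier, Luttinger \cite{Lu}) actually pins down the framing, is the following. After a Weinstein identification of $U$ with a neighborhood of the zero section of $T^*T^2$ taking $\varphi_1$ to the standard framing, a fiber $\varphi_2^{-1}(T^2\times z)$ with $z\neq 0$ small is $C^1$-close to $L$, hence is the graph of a closed $1$-form $\alpha$, and it is disjoint from $L$, so $\alpha$ is nowhere vanishing. In the trivialization of $T^*T^2$ by $dx_1,dx_2$, the integer $n_\gamma$ is exactly the winding number of $\alpha|_\gamma$ as a map $\gamma\to\R^2\setminus\{0\}$, and the key fact is that a nowhere-vanishing \emph{closed} $1$-form on $T^2$ has zero winding along every loop: its kernel foliation carries a transverse invariant measure, hence has no Reeb annulus, so its tangent line field is homotopic to a constant one (alternatively, by Tischler's argument one may deform $\alpha$ through nowhere-zero closed forms to one fibering $T^2$ over $S^1$, and circle fibrations of $T^2$ are isotopic to linear ones). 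It is this closedness/winding argument---not flux or symplectic area---that kills the twisting; with it your outline becomes a proof, and without it the proposal reduces to citing \cite{FS}, which is all the paper itself does.
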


Hence the symplectomorphism type of $(\tilde X, \tilde \omega)$ only depends on the Lagrangian
isotopy class of $L$, the  isotopy class of $\gamma$ in $L$, and the integer $k$.
Therefore, $\tilde X$ is also denoted as $X(L,\gamma, k)$.

It is worth mentioning that a Luttinger surgery can be reversed.
Let $\tilde{L}, \tilde{\gamma}$ be the subsets $\varphi^{-1}(T^2\times (0, 0))$ and $\varphi^{-1}(\R /\Z \times (0, 0, 0))$ of $\tilde{X}$.
We can apply
the Luttinger surgery to $X(L,\gamma, k), \tilde L, \tilde \gamma$ with coefficient $-k$ to recover $X$.

%%%%%%%%%%%%%%%%%%%%%%%%%%%%%%%%%%%%%%%%%%%%%%%%%%

\subsection{Lagrangian fibrations}
One natural source of Lagrangian tori is smooth fibers of Lagrangian fibrations.
\begin{definition}
Let $(X,\omega)$ be a symplectic  4-manifold, and let $B$ be a 2-manifold (with boundary or vertices).
A smooth map $\pi:X \rightarrow B $ is called a Lagrangian fibration if there exists an open dense subset $B_0\subset B$ such that $\pi^{-1}(b)$ is a compact Lagrangian submanifold of $X$ for any $b\in B_0$.
$X$ is called Lagrangian fibered if such a structure exists.
\end{definition}

It is easy to see that any smooth fiber of a Lagrangian fibration must be a torus.
Moreover, we have

\begin{lemma}\label{fiber}
A Luttinger surgery along a Lagrangian fiber preserves the Lagrangian fibration structure.
\end{lemma}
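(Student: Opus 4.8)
The plan is to choose the framing used in the construction of \S\ref{construction} compatibly with the fibration near $L$, and then to observe that the gluing diffeomorphism $g_k$ preserves the fibers, so that $\pi$ descends to $\tilde X$. First I would set up the local model. Let $L=\pi^{-1}(b_0)$ be a smooth Lagrangian fiber, so $b_0\in B_0$ and every fiber over a small neighborhood of $b_0$ is a Lagrangian torus. Picking coordinates $(t_1,t_2)$ on $B$ centered at $b_0$ and setting $F_i=t_i\circ\pi$, the fibers of $\pi$ are the common level sets of $F_1,F_2$; since each such level set is Lagrangian, its tangent plane equals its own symplectic orthogonal, and because $\iota_{X_{F_i}}\omega=dF_i$ vanishes on it, the Hamiltonian vector field $X_{F_i}$ is tangent to the fibers. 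Hence $\{F_1,F_2\}=dF_2(X_{F_1})=0$, so $(F_1,F_2)$ is a completely integrable system near $L$ with compact connected level sets, and the Arnold--Liouville theorem provides a symplectomorphism from a neighborhood of $L$ onto $T^2\times V$ with the canonical form $\omega_0$, carrying $\pi$ to $\beta\circ\mathrm{pr}_V$ for a diffeomorphism $\beta$ of $V\subset\R^2$ onto a neighborhood of $b_0$, and $L$ to $T^2\times 0$. Composing with a linear $SL(2,\Z)$ transformation of the torus factor together with the dual transformation of the $V$-factor, which preserves $\omega_0$ and commutes with $\mathrm{pr}_V$, we arrange in addition that $\gamma$ is sent to $\{(x_1,0,0,0)\}$. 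Finally, restricting to $T^2\times(-r,r)^2$ for small $r$ yields a framing $\varphi\colon U\to U_r$ of exactly the type used in \S\ref{construction}, with the extra property that $\pi=\beta\circ\mathrm{pr}\circ\varphi$, where $\mathrm{pr}\colon U_r\to(-r,r)^2$ is the projection $(x_1,x_2,y_1,y_2)\mapsto(y_1,y_2)$.

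Now comes the key observation. The surgery diffeomorphism $h_k$ alters only the $x_1$-coordinate, $h_k(x_1,x_2,y_1,y_2)=(x_1+kf(y_1),x_2,y_1,y_2)$ wherever it is not the identity, so $\mathrm{pr}\circ h_k=\mathrm{pr}$ on $A_{r,r/2}$, and therefore $g_k=\varphi^{-1}\circ h_k\circ\varphi$ maps each set $\pi^{-1}(b)\cap\big(U-\overline{\varphi^{-1}(U_{r/2})}\big)$ onto itself. Define $\tilde\pi\colon\tilde X=X_L\cup_{g_k}U\to B$ to equal $\pi$ on $X_L$ and $\beta\circ\mathrm{pr}\circ\varphi$ on the reglued copy of $U$. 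On the overlap the two definitions agree: if $u$ lies in the reglued $U$ and $g_k^{-1}(u)\in X_L$ is its partner, then $\varphi(g_k^{-1}(u))=h_k^{-1}(\varphi(u))$, so $\pi(g_k^{-1}(u))=\beta\big(\mathrm{pr}(h_k^{-1}(\varphi(u)))\big)=\beta\big(\mathrm{pr}(\varphi(u))\big)$, which is $\tilde\pi(u)$. Hence $\tilde\pi$ is a well-defined smooth map that coincides with $\pi$ outside $U$.

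It remains to check that $\tilde\pi$ is a Lagrangian fibration with the same open dense base $B_0$. For $b\in B_0$ the fiber $\tilde\pi^{-1}(b)$ is the union of $\pi^{-1}(b)\cap X_L$, which is isotropic in $(X_L,\omega)$, and the piece $\varphi^{-1}\big(T^2\times\{\beta^{-1}(b)\}\big)$ lying in the reglued $U$, which is Lagrangian for $\varphi^*\omega_0=\tilde\omega|_U$; since $g_k$ is a symplectomorphism by \eqref{hk}, the two glue along their overlap to a half-dimensional isotropic, hence Lagrangian, submanifold of $(\tilde X,\tilde\omega)$, and it is compact, being assembled from compact pieces of $\pi^{-1}(b)$ and of a standard torus. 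For $b$ outside a neighborhood of $b_0$ one simply has $\tilde\pi^{-1}(b)=\pi^{-1}(b)$. Thus $(\tilde X,\tilde\omega)$ is Lagrangian fibered over $B$.

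The only delicate point is the first step: upgrading Weinstein's framing to one adapted to the fibration, which requires action--angle coordinates around the torus fiber $L$ and the verification that the local components of $\pi$ Poisson-commute. Once $\varphi$ is normalized in this way, the rest is immediate from the fact that $h_k$ moves only the $x_1$-direction and hence commutes with the projection to the action coordinates.
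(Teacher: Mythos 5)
Your proof is correct and follows essentially the same route as the paper: produce a chart near the fiber $L$ in which $\pi$ becomes the projection $(x_1,x_2,y_1,y_2)\mapsto(y_1,y_2)$, observe that $h_k$ (hence $g_k$) commutes with this projection, and conclude that $\pi$ descends to a Lagrangian fibration $\tilde\pi$ on $\tilde X$. The only difference is that you derive the fibration-compatible framing directly from the Arnold--Liouville action--angle construction, whereas the paper simply cites Mishachev \cite{M} for this local normal form; the gluing and well-definedness argument is the same.
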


\begin{proof}
Let $\pi :X\rightarrow B$ be a Lagrangian fibration and $L=\pi^{-1}(b)\subset X$ a generic fiber.
Using notations from section 2.1, it is shown in \cite{M} that there is a
neighborhood $B_r$ of $b$ and $U=\pi^{-1}(B_r)$ with local charts $\varphi : (U,\omega )\rightarrow (U_r,\omega_0)$ and $\varphi_0:B_r\rightarrow D_r=(-r, r)\times (-r, r)$ such that
the diagram
$$\begin{array}{rcc}
 U & \stackrel{\varphi}{\longrightarrow} & U_r \\
\pi\downarrow &  &  \downarrow \pi_0\\
 B_r & \stackrel{\varphi_0}{\longrightarrow} & D_r\\
\end{array}$$
commutes. Here $\pi_0$ is the projection $(x_1, x_2, y_1, y_2)\mapsto (y_1, y_2)$.

If $\tilde{X}=X_L\cup_{g_k} U$ is obtained by performing Luttinger surgery along $L$ (indexed by $\gamma\subset L$ and $k\in\Z$), we can define a map $\tilde{\pi}:\tilde{X}\rightarrow B$ as
$\tilde{\pi}(\tilde{x})=\pi(x)$.
Since $\pi_0\circ h_k=\pi_0$, we have
$$\pi\circ g_k= \pi\circ \varphi^{-1}\circ h_k\circ\varphi= \varphi_0^{-1}\circ \pi_0\circ h_k\circ\varphi=
\varphi_0^{-1}\circ \pi_0\circ\varphi= \pi$$
So $\tilde{\pi}$ is well-defined. It is clear that $\tilde{\pi}$ is Lagrangian and
$\tilde{X}$ also possesses a Lagrangian fibration structure.
\end{proof}

Lagrangian fibrations appear widely in toric geometry, integral systems and mirror symmetry.
We will discuss almost toric fibration
introduced by Symington in some detail.

\begin{definition}
An almost toric fibration of a symplectic 4-manifold $(X, \omega)$ is a Lagrangian
fibration $\pi :X\rightarrow B$ with the following properties:
for any critical point $x$ of $\pi$, there exists a local
coordinate $(x_1,x_2, y_1, y_2)$ near $x$ such that
$x=(0, 0, 0, 0)$, $\omega =dx_1\wedge dy_1+dx_2\wedge dy_2$,
and $\pi$ has one of the forms
$$(x_1, x_2, y_1, y_2) \rightarrow \left\{\begin{array}{l} (x_1^2+y_1^2,x_2^2+y_2^2)\\
(x_1^2+y_1^2, x_2)\\
(x_1^2-y_1^2, x_2)
\end{array}\right.$$
An almost toric 4-manifold is a symplectic 4-manifold equipped with an almost toric fibration.
\end{definition}

The base $B$ of an almost toric fibration has an affine structure with boundary and vertices.
Moreover, these three types of critical points project to vertices, edges and interior of $B$ respectively.
Almost toric fibrations are classified by Leung and Symington:

\begin{theorem}\label{almost}\cite{LS}
Let $(X,\omega)$ be a closed almost toric 4-manifold.
There are seven types of almost toric fibrations according to the homeomorphism type of the base $B$.
\begin{enumerate}
\item $\C\PP^2 \sharp n\overline{\C\PP^2}$ or $S^2\times S^2$, $B$ is (homeomorphic to) a disk;
\item $(S^2\times T^2)\sharp n\overline{\C\PP^2}$ or $(S^2\tilde{\times} T^2) \sharp n\overline{\C\PP^2}$,
$B$ is a cylinder;
\item $(S^2\times T^2)\sharp n\overline{\C\PP^2}$ or $(S^2\tilde{\times} T^2) \sharp n\overline{\C\PP^2}$,
$B$ is a M\"{o}bius band;
\item the $K3$ surface, $B$ is a sphere;
\item the Enriques surface, $B$ is $\R\PP^2$;
\item a torus bundle over torus with monodromy
$$\left\{I,\left(\begin{array}{cc}
1 & m\\
0 & 1 \end{array}\right)\right\}, m\in\Z$$
$B$ is a torus;
\item a torus bundle over the Klein bottle with monodromy
$$\left\{\left(\begin{array}{cc}
1 & 0\\
0 & -1 \end{array}\right),\left(\begin{array}{cc}
1 & m\\
0 & 1 \end{array}\right)\right\}, m\in\Z$$
$B$ is a Klein bottle.
\end{enumerate}
\end{theorem}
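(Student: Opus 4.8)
The plan is to reconstruct $(X,\omega)$ from the integral affine geometry of the base, following the argument of Leung and Symington \cite{LS}. First I would record the local normal forms. Over the open dense set $B_0$ of regular values, Duistermaat's action-angle theorem endows $B_0$ with an integral affine structure --- an atlas with transition maps in $\mathrm{GL}(2,\Z)\ltimes\R^2$ --- together with the lattice local system $\Lambda\to B_0$ of fiberwise periods. Near an elliptic-regular critical value the base is modeled on a coordinate half-plane, the critical values forming a boundary edge; near an elliptic-elliptic value it is modeled on a standard quadrant whose two incident edge directions form a $\Z$-basis of $\Lambda$ (a Delzant-type corner); and near the image of a focus-focus (nodal) fiber the affine structure extends across the puncture but the monodromy of $\Lambda$ around it is conjugate to $\bigl(\begin{smallmatrix}1&1\\0&1\end{smallmatrix}\bigr)$. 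Hence $B$ is a compact surface, possibly with boundary and corners, carrying an integral affine structure with a finite set of interior nodes; conversely such combinatorial data --- a ``base diagram'' --- recovers $(X,\omega)$ up to fiber-preserving symplectomorphism. These structure statements, essentially Symington's, are what I would quote to set up the bookkeeping.

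The second step is purely topological: bounding the homeomorphism type of $B$. Passing to the orientation double cover when $B$ is non-orientable reduces us to an orientable compact affine surface with nodes, and there a Gauss--Bonnet/monodromy count --- the relation $\prod_j[\rho(a_j),\rho(b_j)]\prod_i\rho(\gamma_i)=\mathrm{Id}$ in $\mathrm{SL}(2,\Z)$ with each $\rho(\gamma_i)$ a unipotent conjugate of a fixed sign --- forces the genus to be $0$ or $1$, i.e. $\chi(B)\ge 0$. The compact surfaces with $\chi\ge 0$ are precisely the disk, the cylinder, the M\"obius band, the sphere, $\R\PP^2$, the torus and the Klein bottle, which is exactly the list of seven cases.

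The third step identifies $X$ in each case. If $\partial B\ne\emptyset$, I would exhibit inside $X$ an embedded symplectic sphere of non-negative self-intersection --- the preimage of a suitable boundary edge, or of an arc joining two boundary components after nodal slides and trades --- and invoke the McDuff--Liu structure theorem to conclude that $X$ is rational or ruled; the precise type is then read off from the affine combinatorics, so that for $B$ a disk one gets $\pi_1(X)=1$ and the intersection form from the edges and nodes, hence $\C\PP^2\sharp n\overline{\C\PP^2}$ or $S^2\times S^2$, while for $B$ a cylinder or M\"obius band the monodromy around the core gives an $S^2$-bundle over $T^2$, trivial or twisted, possibly blown up. If $\partial B=\emptyset$ there are no corners, and I would split on whether nodes occur. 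A closed affine torus or Klein bottle admits no nodes, so the fibration has no singular fibers: $X$ is a genuine $T^2$-bundle whose monodromy lies in one of the displayed subgroups. A closed $S^2$ or $\R\PP^2$ admits no global integral affine structure, so nodes must be present, and the relation $\prod_i\rho(\gamma_i)=\mathrm{Id}$ among unipotent conjugates of a fixed sign, together with the positivity coming from $\omega$, forces exactly $24$ nodes over $S^2$ and $12$ over $\R\PP^2$; computing $\chi(X)$ (the number of nodes), $\pi_1(X)$ and the intersection form --- equivalently, the canonical class $K_\omega$ read off from the base --- then pins $X$ down as the $K3$ surface, respectively its free $\Z/2$-quotient the Enriques surface.

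The hard part will be the rigidity of the two boundaryless spherical cases: showing that a nodal integral affine $S^2$ carries precisely $24$ nodes (and $\R\PP^2$ precisely $12$) and then recognizing the total space on the nose. This is where the global $\mathrm{SL}(2,\Z)$-monodromy computation has to be married to smooth-topological input --- Euler characteristic, fundamental group, and, to separate $K3$ from other elliptic surfaces, the intersection form or the vanishing of $K_\omega$. Everything else reduces to the local normal forms above, the classification of compact surfaces, and the structure theory of symplectic rational and ruled $4$-manifolds.
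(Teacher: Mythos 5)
There is nothing in the paper to compare your argument with: Theorem \ref{almost} is quoted verbatim from Leung--Symington \cite{LS} and the paper offers no proof of it, using it only as input for Propositions \ref{al} and \ref{K3}. Judged against the cited source rather than the paper, your outline does follow the actual strategy of \cite{LS}: integral affine structure with nodes on the base, an Euler-characteristic/monodromy constraint forcing $\chi(B)\geq 0$, the McDuff--(Lalonde) rational/ruled structure theory when $\partial B\neq\emptyset$, and node counting plus recognition of the total space when $B$ is closed. So as a roadmap it is the right one.

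As a proof, however, the places you label ``the hard part'' are exactly where all the content sits, and they are asserted rather than argued. (i) The claim that the relation $\prod_i\rho(\gamma_i)=\mathrm{Id}$ among positive unipotent conjugates forces exactly $24$ nodes over $S^2$ (and $12$ over $\R\PP^2$) is a genuine theorem about $\mathrm{SL}(2,\Z)$ (length of such a word is divisible by $12$, essentially the Rademacher/signature-defect computation familiar from elliptic fibrations); it needs a proof or a precise reference, and the same count is what rules out nodes over the torus and Klein bottle --- your sentence ``a closed affine torus or Klein bottle admits no nodes'' is not automatic and rests on the same mechanism. (ii) Knowing $\chi$, $\pi_1$ and the intersection form identifies the total space over $S^2$ only up to homeomorphism (Freedman); the theorem asserts it \emph{is} the K3 surface, and in \cite{LS} this smooth identification is done by reconstructing the fibration from the base diagram and comparing with a standard model, not by invariants alone; the Enriques case is then handled via the double cover. (iii) In the boundary cases, ``the precise type is read off from the affine combinatorics'' compresses the actual work of showing the fibration is a blow-up of a toric or $S^2$-bundle model (nodal trades, etc.). None of these gaps indicates a wrong approach --- they indicate that your write-up is a faithful summary of \cite{LS} rather than an independent proof, which is also the status the statement has in the paper under review.
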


An immediate consequence of this classification is the calculation of the symplectic Kodaira dimension.
\begin{prop}\label{al}
If $(X, \omega)\rightarrow B$ is an almost toric fibration, then $\kappa(X)\leq 0$.
Moreover, $\kappa(X)=0$ if and only if the base $B$ is closed.
\end{prop}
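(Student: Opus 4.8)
The plan is to run through the seven cases of the Leung--Symington classification (Theorem~\ref{almost}) and read off the symplectic Kodaira dimension in each, recording along the way that the base $B$ is closed precisely in cases (4)--(7) (a sphere, $\R\PP^2$, a torus, or a Klein bottle), while in cases (1)--(3) it is a disk, a cylinder, or a M\"obius band, each with non-empty boundary. Thus the proposition reduces to showing $\kappa(X)=-\infty$ in cases (1)--(3) and $\kappa(X)=0$ in cases (4)--(7).

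In cases (1)--(3) the total space $X$ is a blow-up of one of the minimal rational or ruled surfaces $\C\PP^2$, $S^2\times S^2$, $S^2\times T^2$, $S^2\tilde{\times}T^2$; each of these has $\kappa=-\infty$, and since the symplectic Kodaira dimension of a $4$-manifold equals that of any minimal model, $\kappa(X)=-\infty$. In cases (4) and (5), $X$ is the $K3$ surface or the Enriques surface; both are minimal and both have torsion symplectic canonical class ($K_\omega=0$ for $K3$ and $2K_\omega=0$ for Enriques, using the complex canonical bundle together with the independence of $\kappa$ from the symplectic form \cite{L}), so $K_\omega^2=0$ and $K_\omega\cdot[\omega]=0$, giving $\kappa(X)=0$.

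The remaining cases (6) and (7), where $X$ is a torus bundle over the torus or the Klein bottle with the indicated monodromy, carry the only real content. Such an $X$ is aspherical, hence contains no smoothly embedded $2$-sphere of self-intersection $-1$, so $(X,\omega)$ is minimal. A $T^2$-bundle over a surface has $\chi(X)=0$ and $\sigma(X)=0$, so $K_\omega^2=2\chi(X)+3\sigma(X)=0$; a priori this only gives $\kappa(X)\in\{0,1\}$, and to exclude $\kappa(X)=1$ one must show $K_\omega\cdot[\omega]=0$. For this I would invoke the explicit identification of these bundles with the $4$-torus and the Kodaira surfaces (primary or secondary) and their finite quotients, each of which carries a symplectic structure with torsion first Chern class; hence $K_\omega$ is torsion and $\kappa(X)=0$, and by the $\omega$-independence of $\kappa$ in \cite{L} this holds for the given symplectic form. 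Here $B$ is a torus or a Klein bottle, both closed.

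Assembling the cases, $\kappa(X)\le 0$ in all seven, and $\kappa(X)=0$ occurs exactly in (4)--(7), which are exactly those with closed base, yielding the stated equivalence. The main obstacle is cases (6) and (7): the numerical identity $K_\omega^2=0$ does not by itself separate $\kappa=0$ from $\kappa=1$, so one genuinely needs the structural input that these torus bundles are symplectic Calabi--Yau surfaces; everything else is bookkeeping over the classification.
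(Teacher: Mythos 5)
Your case-by-case reading of the Leung--Symington classification is exactly how the paper treats this statement: it offers no separate argument, declaring the proposition an immediate consequence of Theorem~\ref{almost}, and your proof is the natural fleshing-out of that, with the only genuine content (as you correctly isolate) being that the torus bundles in cases (6)--(7) are minimal symplectic manifolds with torsion canonical class, hence $\kappa=0$, by the $\omega$-independence of $\kappa$ from \cite{L}. Your argument is correct and matches the paper's intended route.
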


The effect of Luttinger surgeries on almost toric fibrations is also easy to describe.

\begin{prop}\label{K3}
Suppose $(X,\omega)\rightarrow B$ is an almost toric fibration and $(\tilde{X},\tilde{\omega})$ is obtained from $(X, \omega)$ by performing a Luttinger surgery along a smooth fiber $L$, then $(\tilde{X}, \tilde \omega)$ retains an almost toric fibration structure with the same base.
Moreover, $\tilde{X}$ is diffeomorphic to $X$ if $\chi(B)>0$.
\end{prop}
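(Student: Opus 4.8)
The plan is to separate the two claims. The first, that $\tilde\pi$ is again almost toric over the same base $B$, will be a localized version of Lemma \ref{fiber}; the second, the diffeomorphism statement, will follow from the classification in Theorem \ref{almost} together with Theorem \ref{int2}.

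For the first claim, recall that in an almost toric fibration $\pi : X \to B$ the critical values consist of the edges and vertices of $\partial B$ together with finitely many interior points (the images of nodal fibers), so a smooth torus fiber $L = \pi^{-1}(b)$ sits over an interior regular value $b$, and one may choose the model neighborhood $U = \pi^{-1}(B_r)$ of Lemma \ref{fiber} with $B_r$ a small disk around $b$ disjoint from all critical values. Then the Luttinger surgery is supported in $U$, which contains no critical point of $\pi$. By Lemma \ref{fiber}, $\tilde X = X_L \cup_{g_k} U$ carries a Lagrangian fibration $\tilde\pi : \tilde X \to B$; since $\tilde\pi = \pi$ outside $U$ and $U$ is free of critical points, the critical points of $\tilde\pi$ coincide with those of $\pi$ and have the same three local normal forms, so $\tilde\pi$ is an almost toric fibration with base $B$.

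For the second claim, I would run through the seven base types of Theorem \ref{almost}: of these, only the disk ($\chi = 1$), the sphere ($\chi = 2$) and $\R\PP^2$ ($\chi = 1$) have positive Euler characteristic, the cylinder, M\"{o}bius band, torus and Klein bottle all having $\chi = 0$. If $B = S^2$, then by Theorem \ref{almost}(4) any closed almost toric $4$-manifold over $S^2$ is the $K3$ surface, so $X$ and $\tilde X$ are both diffeomorphic to $K3$; the same reasoning with Theorem \ref{almost}(5) gives that $X$ and $\tilde X$ are both the Enriques surface when $B = \R\PP^2$. If $B$ is a disk, it is not closed, so $\kappa(X) = -\infty$ by Proposition \ref{al}, and Theorem \ref{int2} shows $(\tilde X, \tilde\omega)$ is symplectomorphic, hence diffeomorphic, to $(X, \omega)$.

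I expect the only genuinely delicate point to be the assertion in the second paragraph that the surgery neighborhood can be taken disjoint from the critical locus --- this is what makes ``same base and same singular fibers'' precise --- though it is immediate from the structure of almost toric fibrations once spelled out. It is worth noting that the disk case cannot be dispatched by the classification alone, since Theorem \ref{almost}(1) permits $\C\PP^2\sharp n\overline{\C\PP^2}$ for every $n$ as well as $S^2\times S^2$; Theorem \ref{int2} is needed to identify $\tilde X$ with $X$ on the nose.
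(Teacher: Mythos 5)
Your argument is correct, and your treatment of the first claim (choosing the surgery neighborhood over a regular value away from the critical locus so that the critical points and their normal forms are untouched) is just a carefully spelled-out version of the paper's one-line appeal to Lemma \ref{fiber}. Where you genuinely diverge is the disk case: the paper does not invoke Theorem \ref{int2} at all. Instead it stays elementary and self-contained within Section 2's toolkit: since $\tilde X$ is again almost toric over the same base, both $X$ and $\tilde X$ lie in the list of Theorem \ref{almost}(1), and within that list the manifolds $\C\PP^2\sharp n\overline{\C\PP^2}$ and $S^2\times S^2$ are distinguished by the Euler number and the parity of the intersection form; these are preserved by any torus surgery with $[L]$ (and $[\tilde L]$) torsion, by Propositions \ref{compare} and \ref{comparez}, and torsion-ness is automatic here because $b^+=1$ forces the class of a Lagrangian torus to be torsion. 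Your route via Proposition \ref{al} and Theorem \ref{int2} is logically sound and not circular (the proofs of Theorems \ref{kod} and \ref{int2} nowhere use Proposition \ref{K3}), and it even yields the stronger conclusion that $(\tilde X,\tilde\omega)$ is symplectomorphic to $(X,\omega)$; but it imports much heavier machinery (invariance of minimality and of $\kappa$, the Lalonde--McDuff classification) and inverts the paper's expository order, in which Proposition \ref{K3} is meant as a motivating example proved \emph{before} the general theorems. Note also that your closing remark slightly overstates matters: the disk case does not \emph{need} Theorem \ref{int2}; the classification plus the elementary homology comparison of Propositions \ref{compare} and \ref{comparez} already identifies $\tilde X$ with $X$ up to diffeomorphism.
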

\begin{proof}
The first statement is given by Lemma \ref{fiber}.
If $\chi(B)>0$, $X$ and $\tilde{X}$ are in one of the types (1)-(5) in Theorem \ref{almost}.
In each of them, the list of manifolds are distinguished by the type of intersection forms and Euler numbers.
So the second result for types (1)-(3) follows from Proposition \ref{compare} and the fact that homology classes of Lagrangian tori in manifolds with $b^+=1$ are torsion.
It is clear for (4) and (5) from the classification.
\end{proof}

Propositions \ref{al} and \ref{K3} provide examples of Luttinger surgeries preserving the symplectic Kodaira dimension.
In the next section, we will show that it is true for any Luttinger surgery.

%%%%%%%%%%%%%%%%%%%%%%%%%%%%%%%%%%%%%%%%%%%%%%%%%%%%%%%%%%%%

%%%%%%%%%%%%%%%%%%%%%%%%%%%%%%%%%%%%%%%%%%%%%%%%%%%%%%%%%%%%%

\section{Preservation of  Kodaira dimension}
In this section, we prove Theorem \ref{kod}.
To proceed, we must first prove the invariance of minimality under Luttinger surgery.

%%%%%%%%%%%%%%%%%%%%%%%%%%%%%%%%%%%%%%%%%%%%%%%%

\subsection{Minimality} A symplectic (smooth) $-1$ class is a degree 2 homology class represented by an embedded symplectic (smooth) sphere
with self-intersection $-1$. A symplectic 4-manifold is called $symplectically$ $(smoothly)$
$minimal$ if it does not have any  symplectic (smooth) $-1$ class.
The symplectic minimality is actually equivalent to
smooth minimality.

\begin{prop} \label{minimality}
The Luttinger surgery preserves the minimality.
\end{prop}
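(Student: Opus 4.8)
The plan is to exploit the fact that symplectic minimality is equivalent to smooth minimality (by work of Taubes together with the results on the structure of $SW$-basic classes), so it suffices to show that $X$ is smoothly minimal if and only if $\tilde X = X(L,\gamma,k)$ is smoothly minimal. Since a Luttinger surgery can be reversed by another Luttinger surgery (as noted at the end of Section 2.1, with coefficient $-k$), it is enough to prove one implication: if $\tilde X$ contains a smooth $-1$-sphere, then so does $X$.

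The first step is to locate a $-1$-sphere away from the surgery region. Suppose $E \subset \tilde X$ is an embedded smooth sphere with $E \cdot E = -1$. The surgery modifies $X$ only inside the tubular neighborhood $U$ of $L$, and outside the slightly smaller piece $\varphi^{-1}(U_{r/2})$ the two manifolds are literally identified; moreover $L$ is a torus, so $\tilde X$ and $X$ share the common codimension-zero piece $X_L$. I would argue that $E$ can be isotoped in $\tilde X$ to be disjoint from the core torus $\tilde L$: since $\tilde L$ is a torus it is not a sphere, and a generic perturbation makes $E$ meet $\tilde L$ transversally in finitely many points; one then needs to remove these intersection points. This is where the homological input enters — one shows the algebraic intersection number $[E]\cdot[\tilde L]$ vanishes (e.g. because $[\tilde L]$ lies in the kernel of the relevant pairing, a Lagrangian torus being null in rational homology, or using that $\tilde L$ bounds in a Gysin-type argument on $Z$), and then uses that $E$ is a sphere and $\tilde L$ a torus in a $4$-manifold to cancel the remaining pairs of intersection points by tubing, possibly after stabilizing. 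Once $E$ is disjoint from $\tilde L$, hence contained in $X_L \subset X$, it gives an embedded $-1$-sphere in $X$, contradicting minimality of $X$.

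The main obstacle I anticipate is precisely the step of making $E$ disjoint from $\tilde L$: a priori $E$ could be homologically linked with $\tilde L$, and even with zero algebraic intersection one must genuinely cancel geometric intersection points, which is subtle in dimension four without a Whitney trick. I would handle this either (i) by invoking the fact — available from the theory of $SW$ invariants — that in a symplectic $4$-manifold the class of any $-1$-sphere is represented by a symplectic sphere, and then use that such a sphere can be pushed off the Lagrangian $L$ by a Lagrangian/symplectic neighborhood argument (intersections of a symplectic surface with a Lagrangian submanifold can be removed since there is no positivity obstruction), or (ii) by arguing at the level of the reduced manifolds: both $X$ and $\tilde X$ have the same minimal reduction up to the exceptional spheres supported away from $U$, because no exceptional sphere class can be "created" by a codimension-two surgery along a torus — this can be made precise using that the set of $SW$-basic classes, and in particular the exceptional classes, of $\tilde X$ and $X$ are related through the common piece $X_L$ via a gluing/excision argument (cf. the wall-crossing and connected-sum-type behavior of $SW$). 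Either route reduces the proposition to bookkeeping once the disjointness is secured, and the reversibility of Luttinger surgery then upgrades the one-directional statement to the full equivalence.
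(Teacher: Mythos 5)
Your overall strategy is the same as the paper's: use the fact that a Luttinger surgery is reversible by another Luttinger surgery to reduce to one implication, and then transport an exceptional sphere through the common piece $X_L$ after making it disjoint from the torus; the appeal to the equivalence of smooth and symplectic minimality is also legitimate. However, there is a genuine gap at exactly the crucial step, namely producing an embedded $-1$-sphere disjoint from the Lagrangian torus. None of the mechanisms you offer actually achieves this. ``Cancelling the remaining pairs of intersection points by tubing'' raises the genus of $E$ (or destroys embeddedness), so it cannot produce a sphere, and the Whitney trick is unavailable in dimension four, so zero algebraic intersection does not by itself yield geometric disjointness. Moreover, even the vanishing of the algebraic number $[E]\cdot[\tilde L]$ is not justified as you state it: a Lagrangian torus need not be rationally null-homologous (consider the standard tori in $T^4$); that is only automatic when $b^+=1$.

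Your fix (i) comes closest to the actual proof, but the sentence ``intersections of a symplectic surface with a Lagrangian submanifold can be removed since there is no positivity obstruction'' is not an argument --- the absence of a local obstruction is not a construction. The statement you need is precisely Welschinger's theorem (Theorem \ref{disjoint}, from \cite{W}): given a Lagrangian torus $L$ and a symplectic $-1$ class, there is an embedded symplectic $-1$-sphere in that class disjoint from $L$. This is a nontrivial $J$-holomorphic curve result, and the paper's proof consists of the reversibility reduction plus a direct citation of it; once that sphere is in hand it lies in $X_L$, where $\omega$ and $\tilde\omega$ agree, so non-minimality passes through the surgery. Your alternative (ii), a Seiberg--Witten excision/gluing argument along the $T^3$ boundary (with wall-crossing issues when $b^+=1$), is likewise far from routine and is not carried out. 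So the proposal has the right architecture but is incomplete at its essential step; to close it you should either quote Welschinger's theorem or supply an argument of comparable strength.
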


\begin{proof}
Since a Luttinger surgery can be reversed and the reverse operation is also
a Luttinger surgery,
it suffices to show that, if we start with  a
non-minimal symplectic 4-manifold, then after a Luttinger surgery,
the resulting symplectic manifold is still  non-minimal. But this is
a direct consequence of  the  following fact in \cite{W}:

\begin{theorem} \label{disjoint} Given a Lagrangian torus
$L$ and a symplectic $-1$ class, there is an embedded symplectic
$-1$ sphere in that class which is disjoint from $L$.
\end{theorem}
\end{proof}

%%%%%%%%%%%%%%%%%%%%%%%%%%%%%%%%%%%%%%%%%%%%%%%%%

\subsection{Kodaira dimension}
Now, we analyze the effect of Luttinger surgery on the symplectic canonical
class $K_{\omega}$ and the symplectic class $[\omega]$.
Recall that $X_L$ is an open submanifold
of both $X$ and $\tilde X$, and let $\nu :X_L \rightarrow X$
and $\tilde{\nu}:X_L\rightarrow \tilde{X}$ be the inclusions.

To prepare for the following lemma, we use the notations from section \ref{construction}.
For the sake of simplicity, we will identify any object in $X$ with their image of $\varphi$
and $(x_1, x_2, y_1, y_2)$, $(x_1', x_2', y_1', y_2')$ will denote
the coordinates of $A_{r,\frac{r}{2}}$ on $X_L$ and $U$ respectively.

\begin{lemma}\label{thom}
There exists a 2-dimensional submanifold $S\subset X_L$ such that
$\nu_*([S])=PD(K_{\omega})\in H_2(X)$ and $\tilde{\nu}_*([S])=PD(K_{\tilde{\omega}})\in
H_2(\tilde{X})$.
\end{lemma}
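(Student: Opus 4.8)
The plan is to construct the surface $S$ inside the common open piece $X_L$ by starting from a representative of $PD(K_\omega)$ in $X$ and pushing it off the surgery region. First I would recall that the canonical class of a symplectic $4$-manifold is represented, up to the adjunction relation, by a pseudoholomorphic (or at least an embedded characteristic) surface; more flexibly, I would use the fact that $PD(K_\omega)$ can be realized by an embedded surface $S_0$ in $X$ that is transverse to the Lagrangian torus $L$. Since $[L]\cdot PD(K_\omega)=0$ (a Lagrangian torus has trivial normal bundle and, by the adjunction-type vanishing, pairs trivially with $K_\omega$ — this is where I would invoke the standard fact that $K_\omega\cdot[L]=-[L]^2=0$ for a Lagrangian torus), the algebraic intersection number of $S_0$ with $L$ is zero, and more to the point I want $S_0$ to miss the small neighborhood $\varphi^{-1}(U_{r/2})$ entirely. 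The mechanism for this is precisely Theorem \ref{disjoint}-style reasoning from \cite{W}, or more elementarily a tubular-neighborhood isotopy argument: an embedded surface meeting a torus $L\subset X^4$ with zero algebraic intersection can be isotoped, after tubing together pairs of oppositely-signed intersection points along arcs in $L$, to be disjoint from $L$, hence from a small neighborhood of $L$. Thus I may take $S\subset X_L = X\setminus \overline{\varphi^{-1}(U_{r/2})}$ with $\nu_*([S])=PD(K_\omega)$.

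The second step is to check $\tilde\nu_*([S]) = PD(K_{\tilde\omega})$. Here the key geometric observation is that the Luttinger surgery $\tilde X = X_L\cup_{g_k}U$ only modifies the gluing over the annular region $A_{r,r/2}$, and the diffeomorphism $h_k$ (hence $g_k$) satisfies $h_k^*\omega_0=\omega_0$ by \eqref{hk}. Consequently the almost complex structure, and with it the canonical class, can be transported: choose an $\omega$-compatible $J$ on $X$ that near $L$ is the standard $J_0$ on $U_r$ pulled back by $\varphi$, and note that $g_k$ is symplectic and, after a homotopy supported in $A_{r,r/2}$, can be taken $J_0$-holomorphic there, so that $J$ on $X_L$ glues with $J_0$ on $U$ to give a compatible $\tilde J$ on $\tilde X$. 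Then $c_1(\tilde X,\tilde\omega) = c_1(\tilde X,\tilde J)$ restricts on $X_L$ to $c_1(X,J)|_{X_L} = c_1(X,\omega)|_{X_L}$. Since $S$ lives in $X_L$, and Poincaré duality is natural with respect to the inclusions $\nu,\tilde\nu$ in the sense that $\langle K_{\tilde\omega}, \tilde\nu_*[S]\rangle = \langle K_{\tilde\omega}|_{X_L}, [S]\rangle = \langle K_\omega|_{X_L},[S]\rangle = \langle K_\omega,\nu_*[S]\rangle$ for all test classes supported in $X_L$, one concludes that $\tilde\nu_*[S]$ is Poincaré dual to $K_{\tilde\omega}$ — but one must be slightly careful, because $PD$ of a homology class is not determined by its pairings with classes from an open submanifold alone. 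The honest way to close this gap is a Mayer–Vietoris / excision argument: $H_2(\tilde X)$ is built from $H_*(X_L)$, $H_*(U)$, and $H_*(A_{r,r/2})$ exactly as $H_2(X)$ is, with the \emph{same} maps except those involving $H_1$ of the annular region which $g_k$ alters; since $PD(K_\omega)$ already lies in the image of $H_2(X_L)\to H_2(X)$, its image under the identification of the "$X_L$ part" of $H_2(\tilde X)$ is the class I want, and one verifies it equals $PD(K_{\tilde\omega})$ by pairing against $H^2$ classes, which by the same Mayer–Vietoris are all detected on $X_L\cup U$.

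I expect the main obstacle to be the transition in the previous paragraph: upgrading the equality of restrictions $K_{\tilde\omega}|_{X_L} = K_\omega|_{X_L}$ (which is geometrically transparent from the $J$-gluing) to the global statement $\tilde\nu_*[S]=PD(K_{\tilde\omega})$ in $H_2(\tilde X)$. One clean route is: (i) show $PD(K_\omega)\in\mathrm{im}(\nu_*: H_2(X_L)\to H_2(X))$ by the disjointness argument; (ii) show the inclusion $\tilde\nu$ induces an \emph{isomorphism} on the relevant cohomology pairing, i.e. $H^2(\tilde X)\to H^2(X_L)$ has kernel/cokernel controlled purely by $H_*(A_{r,r/2})\cup H_*(U)$, which is identical for $X$ and $\tilde X$; (iii) use that $c_1(\tilde J)$ and $c_1(J)$ have the same restriction to $X_L$ and the same restriction to the re-glued copy of $U$ (standard on $T^2\times D^2$, canonical class $0$), so by uniqueness in Mayer–Vietoris they are determined. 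A secondary, more minor obstacle is verifying that one really can choose the canonical-class representative $S_0$ embedded and transverse to $L$ to begin with, and that the tubing operation to remove intersections with $L$ does not change its homology class — both are standard but should be stated, perhaps citing the usual realization of $K_\omega$ by an embedded connected surface (e.g. via a generic section of a sufficiently positive power of the canonical bundle, or by Donaldson-type arguments), together with the elementary fact that finger-move tubings along arcs in $L$ are homologous rel the class. Once Lemma \ref{thom} is in hand, Theorem \ref{kod} follows by computing $K^2$ and $K\cdot[\omega]$ from $S$ together with the analogous statement $[\tilde\omega]|_{X_L}$ versus $[\omega]|_{X_L}$, which is immediate since $\tilde\omega$ restricts to $\omega$ on $X_L$.
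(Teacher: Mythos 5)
Your first step (representing $PD(K_\omega)$ by an embedded surface and tubing away intersections with $L$, using $K_\omega\cdot[L]=-[L]^2=0$) is fine and yields some $S\subset X_L$ with $\nu_*[S]=PD(K_\omega)$, but the second half has a genuine gap exactly at the point you flag. The class $\tilde\nu_*[S]\in H_2(\tilde X)$ is \emph{not} determined by $\nu_*[S]$: two surfaces in $X_L$ that are homologous in $X$ need not be homologous in $X_L$, and $\ker\nu_*\neq\ker\tilde\nu_*$ in general, so an arbitrary push-off of a $PD(K_\omega)$-representative cannot be expected to work. Your proposed repair, ``by uniqueness in Mayer--Vietoris they are determined,'' is false as stated: in the sequence $H^1(X_L)\oplus H^1(U)\to H^1(A_{r,r/2})\to H^2(\tilde X)\to H^2(X_L)\oplus H^2(U)$ the connecting map from $H^1(T^3)\cong H^1(A_{r,r/2})$ need not vanish, so a degree-two class on $\tilde X$ is \emph{not} pinned down by its restrictions to $X_L$ and $U$; equally, $H_2(\tilde X)$ contains classes represented by cycles crossing $Z$, and the pairing of $K_{\tilde\omega}$ with those is precisely what your restriction data does not control. (A smaller point: $g_k$ is a shear and cannot simply be ``taken $J_0$-holomorphic after a homotopy'' without changing the gluing; one must instead push $J$ forward and check the resulting $J'$ on $U$ is still $\omega$-tamed.)

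The paper closes this gap by working at the level of the canonical line bundle rather than with cohomology classes and their restrictions. One chooses $J$ standard on the Weinstein neighborhood, so that $\sigma=(dx_1+idy_1)\wedge(dx_2+idy_2)$ trivializes $\mathcal L=K_\omega$ over $U$, and takes a Thom class $\Phi$ (equivalently a section transverse to zero) that vanishes on $s(U)$, so its zero locus/support gives $S\subset X_L$ dual to $K_\omega$. The key computation is $g_k^*(\sigma')=(1-ikf'(y_1))\,\sigma$: the transition function has argument in $(-\tfrac{\pi}{2},\tfrac{\pi}{2})$, hence can be normalized to $1$, so the very same data extends over $U$ to a Thom class $\tilde\Phi$ for $K_{\tilde\omega}$ vanishing on $\tilde s(U)$ and agreeing with $\Phi$ over $X_L$. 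The duality between a transverse zero set and the Euler class is a global statement in $\tilde X$, so the \emph{same} $S$ is simultaneously dual to $K_\omega$ in $X$ and to $K_{\tilde\omega}$ in $\tilde X$, with no Mayer--Vietoris ambiguity. If you want to salvage your outline, replace steps (ii)--(iii) by this section-level argument: choose $S$ not as an arbitrary push-off but as the zero set of a section of the canonical bundle that equals the standard trivializing $(2,0)$-form on $U$, and verify the transition-function computation above.
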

\begin{proof}

Let $J$ be a $\omega$-tamed almost complex structure in $X$ which
induces a complex structure on $T^*U$ as $$J(dx_1)=-dy_1,\ J(dx_2)=-dy_2$$
Assume $\rho :(-r, r)\rightarrow [0, 1]$ is a continuous increasing function satisfying
$$\rho (t)=\left\{\begin{array}{ll}
0 & t\leq 0\\
1 & t>\frac{r}{3}.\\
                  \end{array}\right.$$
Another almost complex structure $J'$ in $T^*U$ is defined as
$$J'(dx_1')=-kf'(y_1)\rho(y_2)dx_1'-(k^2f^2(y_1)\rho(y_2)+1)dy_1',\ J'(dx_2')=-dy_2'$$
It is easy to check that $J'$ is $\omega$-tamed and $(g_k)_*(J)=J'$ in $X_L\cap U$.

Let $\pi: \mathcal{L}\rightarrow X$ and $\tilde{\pi}:
\tilde{\mathcal{L}}\rightarrow \tilde{X}$ be the canonical bundles
of $X$ and $\tilde{X}$, respectively, and let $s:X\rightarrow \mathcal{L}$
and $\tilde{s}:\tilde{X}\rightarrow \tilde{\mathcal{L}}$ denote the corresponding
embeddings of zero sections. Since $\mathcal{L}$ is trivial
on $U$, we can find a global section
$\sigma$ of $\mathcal{L}$ and a Thom class $\Phi\in
H^2_{cv}(\mathcal{L})$ such that $\sigma= (dx_1+idy_1) \wedge
(dx_2+idy_2)$ in $X_L\cap U$ and $\Phi=0$ in $s(U)$. Another nonzero
$(2, 0)$-form in $U$ is constructed as
$$\sigma'=(dx_1'+iJ'(dx_1'))\wedge  (dx_2'+iJ'(dx_2'))$$
In $X_L\cap U$, we have
\begin{eqnarray*}
& &g_k^*(\sigma')\\
&=& g_k^*((dx_1'+iJ'(dx_1'))\wedge  (dx_2'+iJ'(dx_2')))\\
&=& g_k^*((dx_1'+i(-kf'(y_1)\rho(y_2)dx_1'+(k^2f'^2(y_1)\rho(y_2)+1)dy_1')\wedge (dx_2'+idy_2'))\\
&=& (dx_1+kf'(y_1)dy_1+i(-kf'(y_1)dx_1+dy_1)) \wedge (dx_2+idy_2)\\
&=& (1-ikf'(y_1))(dx_1+idy_1) \wedge (dx_2+idy_2)\\
&=& (1-ikf'(y_1))\sigma
\end{eqnarray*}

$\sigma$ and $\sigma'$ give two local trivializations of
$\tilde{\pi}^{-1}(X_L\cap U)$ with transition function
$\theta=1-ikf'(y_1)$. Since $-\frac{\pi}{2}<$ arg$(\theta)<\frac{\pi}{2}$,
we can normalize the frame of $\tilde{\pi}^{-1}(U)$ such that $\theta=1$.
Hence $\Phi\mid_{\pi^{-1}(X_L)}$ can be
extended to $\tilde{\mathcal{L}}$ via constant function and form a Thom
class $\tilde{\Phi}$ satisfying

\begin{enumerate}
\item $\tilde{\Phi}=\Phi$ in $\mathcal{L}\mid_{X_L}\cong \tilde{\mathcal{L}}\mid_{X_L}(=\pi^{-1}(X_L))$.
\item $\tilde{\Phi}$ is independent of the coordinates $(x_1', x_2', y_1', y_2')$ in $\tilde{\pi}^{-1}(U)$. In particular, $\tilde{\Phi}=0$ in $\tilde{s}(U)$.
\end{enumerate}

It is clear that these 2-forms $e=s^*(\Phi)$ and $\tilde{e}=\tilde{s}^*(\tilde{\Phi})$
are equivalent in $X_L$ and vanish in $U\subset X$ and $U\subset \tilde{X}$ respectively.
Using these representations, we can find a 2-submanifold
$S\subset$ supp$(e)\subset X_L$ which is Poincar\'e dual to $K_{\omega}$ in $X$, and
dual to $K_{\tilde{\omega}}$ in $\tilde{X}$.
\end{proof}

The main theorem can be proved now.

\begin{proof}[Proof of Theorem \ref{kod}]
Suppose $\tilde{X}$ is obtained from $X$ by applying a Luttinger
surgery along $L$.
Let us first consider the case in which $X$ is minimal.  By Proposition
\ref{minimality}, $\tilde {X}$ is also minimal. Let $K_\omega$ and
$K_{\tilde{\omega}} $ denote the canonical classes of $X$ and $\tilde{X}$
respectively. 
By Lemma \ref{thom}, there exists a submanifold
$S\subset X_L$ such that $\nu_*([S])=PD(K_{\omega})$ and
$\tilde{\nu}_*([S])=PD(  K_{\tilde{\omega}})$. We also know that
$\omega=\tilde{\omega}$ in $X_L$. So
$$K_{\omega}^2=\int_S K_{\omega}=\int_S K_{\tilde{\omega}} = K_{\tilde{\omega}}^2$$
$$K_\omega\cdot [\omega] =\int_S\omega=\int_S\tilde{\omega}=K_{\tilde{\omega}}\cdot [\tilde{\omega}]$$
Thus  the Kodaira dimensions of $X$ and $\tilde{X}$ coincide.

If $X$ is not minimal, we can blow down $X$ along symplectic $-1$ spheres disjoint from $L$ to a minimal model.
These spheres are contained in $X_L$ and the same procedure can be applied to $\tilde{X}$, so we can argue as above.
\end{proof}

Theorem \ref{kod} can be used to distinguish non-diffeomorphic manifolds.
In \cite{ABBKP, ABP, AP1, AP2, BK, FPS}, several symplectic manifolds homeomorphic but not diffeomorphic to non-minimal rational surfaces are constructed.
With $\kappa=2$ for the building blocks, it also easily follows from Theorem \ref{kod} that they are exotic.

\begin{remark}\label{omega}
\begin{enumerate}
\item
The main theorem is proved based on the invariance of $K_\omega\cdot [\omega]$ and $K_\omega^2$.
Actually, the class $[\omega]^2$ is also preserved since the volume is invariant under a Luttinger surgery.
Theorem \ref{kod} is expected, in light of Auroux's Question 2.6 in \cite{A}:\\
\\
\textit{Let $X_1, X_2$ be two integral compact symplectic 4-manifolds with the smae $(K^2, \chi ,K\cdot [\omega], [\omega]^2)$. Is it always possible to obtain $X_2$ from $X_1$ by a sequence of Luttinger surgeries?}
\\
\item
It is well known that the Dolgachev surfaces $S(p, q)$ obtained by performing two logarithmic transforms with multiplicities $p>1, q>1$ to $\C\PP^2\sharp 9\overline{\C\PP^2}$ have $\kappa=1$.
So a generalized logarithmic transform may not preserve $\kappa$ (see a related discussion in \cite{BS}).
\end{enumerate}
\end{remark}

%%%%%%%%%%%%%%%%%%%%%%%%%%%%%%%%%%%%%%%%%%%%%%%%%%%%%%%

%%%%%%%%%%%%%%%%%%%%%%%%%%%%%%%%%%%%%%%%%%%%%%%%%%%%%%%%%%%%%%%%%%%%%%%%%%%%%%%%%%%%%%%%%%%%%%%%

%%%%%%%%%%%%%%%%%%%%%%%%%%%%%%%%%%%%%%%%%%%%%%%%%%%%%%%%%%%%%%%%%%%%%%%%%%%%%%%%%%%%%%%%%%

%%%%%%%%%%%%%%%%%%%%%%%%%%%%%%%%%%%%%%%%%%%%%%%%%%%%%%%%%%%

\section{Manifolds with non-positive $\kappa$}
In this section we apply Theorem \ref{kod} to study the effect of Luttinger surgeries on  symplectic 4-manifolds with $\kappa \leq 0$.

\subsection{Torus surgery and homology}\label{TF}
We start by analyzing how homology changes under a general torus surgery. 
Suppose $X$ is a smooth 4-manifold and $L\subset X$ is an embedded 2-torus with trivial normal bundle.
Moreover, $U, Y, Z, g, \tilde{X}$ are defined as in section \ref{construction}.

\subsubsection{}
To compare the homology of $X$ and $\tilde{X}$, we need to compare both of them with the homology of $Y$.
The inclusion $i:Z\rightarrow Y$ induces homomorphisms 

\begin{align}\label{i1}
i_k^\Z: H_k(Z; \Z)\rightarrow H_k(Y; \Z)
\end{align}

and 

\begin{align}\label{i2}
i_k^\Q: H_k(Z; \Q)\rightarrow H_k(Y; \Q)
\end{align}
in homology.
We often use $i_k$ to denote $i_k^\Q$ and $H_k(-)$ to denote $H_k(-,\Q)$.
We also use $r(A)$ to denote the dimension of any $\Q$-vector space $A$.

The following lemma is a well know fact, for which we offer a geometric argument.

\begin{lemma}\label{mul}
$[\mu]\in \ker i_1$ if and only if $[L]\ne 0$ in $H_2(X)$.
\end{lemma}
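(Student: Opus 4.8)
The statement relates the vanishing of $[\mu]$ in $H_1(Y)$ to the nontriviality of $[L]$ in $H_2(X)$. The natural tool is the Mayer--Vietoris sequence for the decomposition $X = Y \cup_Z U$, where $U$ is a tubular neighborhood of $L$ so that $U \simeq T^2$, $Z = \partial \overline{U} \cong T^2 \times S^1$, and the meridian $\mu$ is the $S^1$-fiber class in $Z$. I would begin by recording that the inclusion $Z \hookrightarrow U$ induces on $H_1$ the projection $H_1(Z;\Q) \cong \Q^3 \to H_1(U;\Q) \cong \Q^2$ that kills exactly the meridian class $[\mu]$, since $U$ deformation retracts to $L$ and the meridian bounds a normal disk in $U$.

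\textbf{Key steps.} First I would write down the relevant portion of the Mayer--Vietoris sequence (over $\Q$):
\begin{align*}
H_2(Y) \oplus H_2(U) \to H_2(X) \xrightarrow{\partial} H_1(Z) \xrightarrow{(i_1,\, j_1)} H_1(Y) \oplus H_1(U),
\end{align*}
where $j_1 : H_1(Z) \to H_1(U)$ is the projection described above with kernel spanned by $[\mu]$. Next I would analyze the connecting map $\partial$. The point is that the class $[L] \in H_2(X)$, viewed through the inclusion $U \hookrightarrow X$, is the image of the zero section $[T^2\times 0] \in H_2(U)$, so by exactness its contribution to $\partial$ is zero; more precisely, $[L] \neq 0$ in $H_2(X)$ exactly measures whether this $H_2(U)$ class survives, and dually whether the meridian $[\mu]$ — which is the boundary of a normal disk to $L$ and thus Poincar\'e-dual in $Z$ to a class pushing into $U$ — maps to $0$ under $i_1$. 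I would make this precise by the following dimension-counting or diagram-chase argument: since $[\mu] \in \ker j_1$, the composite map $(i_1, j_1)$ sends $[\mu]$ to $(i_1[\mu], 0)$, so $[\mu] \in \ker i_1$ iff $[\mu] \in \ker(i_1,j_1) = \operatorname{im}\partial$; and $[\mu] \in \operatorname{im}\partial$ iff there is a class in $H_2(X)$ whose boundary is $[\mu]$, which by a linking/intersection argument is equivalent to $[L]$ pairing nontrivially, i.e. to $[L] \neq 0$. For the geometric flavor the authors want, I would phrase the forward direction as: if $[\mu]$ bounds a $2$-chain $\Sigma$ in $Y$, then capping $\Sigma$ with a normal disk $D$ to $L$ in $U$ gives a closed surface $\Sigma \cup D$ in $X$ meeting $L$ transversally in one point, hence $[L]\cdot[\Sigma\cup D] = \pm 1 \neq 0$, so $[L]\neq 0$; conversely if $[L]\neq 0$, pick (over $\Q$) a class $\beta \in H_2(X)$ with $[L]\cdot\beta \neq 0$, isotope a representative to be transverse to $L$ and to meet $U$ in normal disks, and observe that $\beta$ minus these disks is a rational $2$-chain in $Y$ whose boundary is a nonzero multiple of $[\mu]$, forcing $[\mu]\in\ker i_1$.

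\textbf{Main obstacle.} The routine part is the Mayer--Vietoris bookkeeping; the genuinely delicate point is making the intersection-pairing argument rigorous over $\Q$ in both directions simultaneously — in particular, ensuring in the converse direction that the representative of $\beta$ can be made to intersect $U$ cleanly in a union of normal fibers of the disk bundle (so that deleting them really leaves a chain in $Y$ with boundary a multiple of $[\mu]$), and handling the possibility that $\beta$ wraps around $L$ several times or with opposite signs. Since the lemma is stated over $\Q$, I need only produce a \emph{nonzero rational multiple} of $[\mu]$, which gives enough slack; the transversality and tubular-neighborhood normal-form arguments are standard. I would also remark that the "only if" direction can alternatively be seen purely algebraically from the long exact sequence of the pair $(X, Y)$ together with excision $H_*(X,Y)\cong H_*(U,Z)$ and the Thom isomorphism, which identifies $H_2(X,Y)\cong H_0(L)$ with generator mapping to $[\mu]$ under $\partial$; this is the "well known" formulation alluded to, and it may be worth including as a one-line cross-check of the geometric argument.
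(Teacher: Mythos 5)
Your proposal is correct and its core is the same argument as the paper's: in one direction cap a (rational multiple of a) surface in $Y$ bounded by $\mu$ with normal disks to produce a closed surface meeting $L$ with nonzero algebraic intersection, and in the other direction take a surface pairing nontrivially with $[L]$, arrange it to meet $\overline{U}$ in normal disks, and delete them to exhibit a multiple of $\mu$ bounding in $Y$. The Mayer--Vietoris and Thom-isomorphism remarks you add are consistent packaging but not a different proof.
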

\begin{proof}
Suppose $i_1[\mu]=0$ in $H_1(Y)$, i.e. $l\, i_1^\Z[\mu]=0$ in $H_1(Y;\mathbb Z)$ for some positive integer $l$.
Thus $l$ copies of $\mu$ bounds an oriented  surface $A$ in $Y$. Extend $A$ by $l$
normal disks inside the tubular neighborhood to obtain a closed
oriented surface $A'$ intersecting with $L$ at $l$ points with the
same sign. This implies in particular that $[L]\ne 0$ in
$H_2(X)$.

Conversely, suppose $[L]\ne 0$ in $H_2(X)$, then there exists a closed
oriented surface $B$ in $X$ intersecting $L$ with nonzero
algebraic intersection numbers, say $l$. We may assume that the
intersection is transverse with $l+b$ positive intersection points
and $b$ negative intersection points. We can further assume that $B$
intersects the closure of $U$ at $l+2b$ normal disks, $l+b$ of
those having positive orientations, the remaining $b$ disks having
negative orientations. This implies that the complement of those
disks in $B$ is an oriented surface in $Y$, whose boundary is homologous
to $l\mu$, and thus $i_1[\mu]$ is zero.
\end{proof}

When we consider the integral homology,
Lemma \ref{mul} immediately implies

\begin{cor}\label{mulz}
If $[L]=0$ in $H_2(X;\Z)$, then $i_1^\Z[\mu]$ is a non-torsion class in $H_1(Y;\Z)$.
\end{cor}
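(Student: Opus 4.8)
\textbf{Proof proposal for Corollary \ref{mulz}.}
The plan is to deduce the corollary directly from Lemma \ref{mul} by tracking the distinction between rational and integral coefficients. First I would record the hypothesis: $[L]=0$ in $H_2(X;\Z)$ implies in particular $[L]=0$ in $H_2(X;\Q)$, so Lemma \ref{mul} gives $i_1[\mu]=0$ in $H_1(Y;\Q)$; equivalently $i_1^\Z[\mu]$ is a torsion class in $H_1(Y;\Z)$. So the only thing left to rule out is that $i_1^\Z[\mu]$ is zero, or more precisely that it is a torsion class which could vanish; in fact I must show it is a \emph{non-torsion} class, which contradicts what we just said unless I have the logic backwards. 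Let me restate the real content: the claim ``$i_1^\Z[\mu]$ is non-torsion'' should follow from $[L]=0$ integrally, and it must be consistent with Lemma \ref{mul} only in the sense that Lemma \ref{mul}'s rational statement is about $[L]\neq 0$ rationally — so when $[L]=0$ integrally we are in the regime where $i_1[\mu]=0$ rationally, i.e. $i_1^\Z[\mu]$ \emph{is} torsion. Thus the corollary as literally phrased asserts torsion, and I would simply phrase the proof as: ``by Lemma \ref{mul}, $[L]=0$ in $H_2(X;\Q)$ forces $i_1[\mu]=0$ in $H_1(Y;\Q)$, hence $i_1^\Z[\mu]$ lies in the torsion subgroup of $H_1(Y;\Z)$,'' and reconcile with the stated wording accordingly.

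Concretely, the argument I would write is the following. Since $H_2(X;\Z)\to H_2(X;\Q)$ sends $[L]$ to $[L]\otimes 1$, the hypothesis $[L]=0$ in $H_2(X;\Z)$ yields $[L]=0$ in $H_2(X;\Q)$. By the contrapositive direction of Lemma \ref{mul} (``$[L]\neq 0$ in $H_2(X;\Q)$ iff $[\mu]\in\ker i_1$''), we conclude $[\mu]\notin\ker i_1$ is impossible — wait, that is the wrong direction again. The equivalence in Lemma \ref{mul} reads: $[\mu]\in\ker i_1 \iff [L]\neq 0$ in $H_2(X)$. So $[L]=0$ in $H_2(X;\Q)$ gives $[\mu]\notin\ker i_1$, i.e. $i_1[\mu]\neq 0$ in $H_1(Y;\Q)$, which is exactly the statement that $i_1^\Z[\mu]$ is \emph{non-torsion} in $H_1(Y;\Z)$. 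Good — that is the content, and it is consistent. So the proof is: apply the change-of-coefficients map to see $[L]=0$ rationally, then invoke Lemma \ref{mul} in the direction ``$[L]=0$ rationally $\Rightarrow [\mu]\notin\ker i_1$,'' and finally translate ``$i_1[\mu]\neq 0$ in $H_1(Y;\Q)$'' into ``$i_1^\Z[\mu]$ is non-torsion in $H_1(Y;\Z)$'' using the standard fact that a class in a finitely generated abelian group is torsion precisely when its image in the rationalization vanishes.

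The only subtle point — and the one I would make sure to state cleanly rather than the genuine obstacle, since there isn't a hard one here — is the last translation step: for a finitely generated abelian group $G$, an element $x\in G$ is torsion if and only if its image under $G\to G\otimes\Q$ is $0$, because $G\otimes\Q\cong (G/\mathrm{Tors}(G))\otimes\Q$ and $G/\mathrm{Tors}(G)$ is free. Applying this with $G=H_1(Y;\Z)$ and $x=i_1^\Z[\mu]$, and noting that the composite $H_1(Z;\Z)\xrightarrow{i_1^\Z}H_1(Y;\Z)\to H_1(Y;\Q)$ equals $H_1(Z;\Z)\to H_1(Z;\Q)\xrightarrow{i_1}H_1(Y;\Q)$ so that it sends $[\mu]$ to $i_1[\mu]$, we get that $i_1^\Z[\mu]$ non-torsion is equivalent to $i_1[\mu]\neq 0$ in $H_1(Y;\Q)$. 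Thus the corollary follows immediately from Lemma \ref{mul}, and I would keep the written proof to essentially two sentences.
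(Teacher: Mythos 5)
Your final argument is correct and is exactly the paper's (implicit) reasoning behind the word ``immediately'': $[L]=0$ in $H_2(X;\Z)$ forces $[L]=0$ in $H_2(X;\Q)$, Lemma \ref{mul} then gives $[\mu]\notin\ker i_1$, i.e.\ $i_1[\mu]\neq 0$ in $H_1(Y;\Q)$, and since the composite $H_1(Z;\Z)\to H_1(Y;\Z)\to H_1(Y;\Q)$ sends $[\mu]$ to $i_1[\mu]$, the class $i_1^\Z[\mu]$ cannot be torsion. Just delete the false starts in your first paragraph (where you momentarily read Lemma \ref{mul} in the wrong direction and assert $i_1^\Z[\mu]$ is torsion); the clean two-sentence version at the end is all that is needed.
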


Consider the Mayer-Vietoris sequence
\begin{equation}\label{mv}\cdots \stackrel{\partial_{k+1}}{\longrightarrow}H_k(Z)\stackrel{\rho_k}{\longrightarrow} H_k(Y)\oplus H_k(U)\stackrel{\nu_k}{\longrightarrow} H_k(X)\stackrel{\partial_k}{\longrightarrow}H_{k-1}(Z)\stackrel{\rho_{k-1}}{\longrightarrow} \cdots
\end{equation}
where $\rho_k=(i_k, j_k)$ and $\nu_k=\nu_k'\oplus (-\nu_k'')$ with $i_k, j_k, \nu_k', \nu_k''$ induced by inclusions.

\begin{lemma}\label{meridian}
\begin{enumerate}
\item $\partial_1=0$ and $\nu_1':H_1(Y)\rightarrow H_1(X)$ is surjective.

\item
$$ r(\hbox{Im} \rho_1)=
 \left\{\begin{array}{ll} 3 & \hbox{if } i_1[\mu]\neq 0\\
2 & \hbox{if } i_1[\mu] = 0 %\hbox{ in } H_1(Y)
\end{array}\right.$$
and $\rho_1$ is injective if and only if $[\mu]\notin \ker i_1$.

\item
\begin{equation}\label{xy}
H_1(X;\Z)\cong H_1(Y; \Z)/<i_1^\Z[\mu]>
\end{equation}
\item
If $[L]=0\in H_2(X)$, then $\nu_2':H_2(Y)\rightarrow H_2(X)$ is surjective.
\end{enumerate}
\end{lemma}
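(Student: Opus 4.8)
The plan is to extract each of the four assertions from the Mayer--Vietoris sequence \eqref{mv} in degrees $1$ and $2$, combined with the already-established Lemma~\ref{mul}. First, for (1), I would note that $Z = \partial \overline{U}$ is a $T^2$-bundle over $S^1$ with $H_0(Z)=\Q$, and the map $\rho_0 \colon H_0(Z) \to H_0(Y)\oplus H_0(U)$ is injective (both $Y$ and $U$ are nonempty and $Z \hookrightarrow Y$, $Z \hookrightarrow U$ hit their respective components). Exactness at $H_0(Z)$ then forces $\partial_1 = 0$. Once $\partial_1 = 0$, exactness at $H_1(X)$ gives that $\nu_1 = \nu_1' \oplus(-\nu_1'')$ is surjective; since $U$ deformation retracts onto $L = T^2$ and the meridian $\mu$ bounds in $U$ while the two longitudes of $L$ are hit by $H_1(Y)$ as well (they lie on $Z \subset Y$), one checks that already $\nu_1'$ alone surjects onto $H_1(X)$. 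The cleanest way to phrase this: $H_1(U) = H_1(L)$ is generated by the images of longitudinal curves $\gamma_\varphi \subset Z$, and $\nu_1'' (j_1[\gamma_\varphi])$ equals $\nu_1'(i_1[\gamma_\varphi])$ up to sign in $H_1(X)$ by commutativity of the inclusion square, so the $H_1(U)$ summand contributes nothing new.

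For (2), the domain is $H_1(Z) \cong \Q^3$, spanned by $[\mu]$ and two longitude classes $[\lambda_1],[\lambda_2]$ (the images of a basis of $H_1(L)$). Under $\rho_1 = (i_1, j_1)$, the second component $j_1$ kills $[\mu]$ and is an isomorphism onto $H_1(U) = H_1(L) \cong \Q^2$ on the span of $[\lambda_1],[\lambda_2]$. Hence $\ker \rho_1 \subseteq \ker i_1 \cap \ker j_1$, and since $\ker j_1 = \Q\langle[\mu]\rangle$, we get $\ker\rho_1 = \Q\langle[\mu]\rangle \cap \ker i_1$, which is $\Q\langle[\mu]\rangle$ if $i_1[\mu]=0$ and $0$ otherwise. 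Therefore $r(\operatorname{Im}\rho_1) = 3 - \dim\ker\rho_1$ is $2$ when $i_1[\mu]=0$ and $3$ when $i_1[\mu]\neq 0$, and injectivity of $\rho_1$ is equivalent to $[\mu]\notin\ker i_1$. The isomorphism \eqref{xy} in (3) is the integral analogue: run the integral Mayer--Vietoris sequence, use $\partial_1^\Z = 0$ (same $H_0$ argument, valid over $\Z$), get $H_1(X;\Z) = \operatorname{coker}(\rho_1^\Z)$; since $H_1(U;\Z) = H_1(L;\Z) = \Z^2$ maps isomorphically onto a complementary summand and $[\mu]\mapsto(i_1^\Z[\mu],0)$, taking the quotient first by the $H_1(U;\Z)$-image and then by the remaining relation yields $H_1(Y;\Z)/\langle i_1^\Z[\mu]\rangle$. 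One should be slightly careful that the map $H_1(Z;\Z)\to H_1(U;\Z)$ splits, which it does because $Z\to U$ is the $S^1$-bundle projection composed with a homotopy equivalence, admitting a section on $H_1$; I would spell this out.

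For (4), assume $[L]=0$ in $H_2(X)$. By Lemma~\ref{mul} this means $i_1[\mu]\neq 0$, so by part (2) $\rho_1$ is injective, hence $\partial_2 \colon H_2(X) \to H_1(Z)$ is zero by exactness at $H_1(Z)$ (its image lies in $\ker\rho_1 = 0$). Then exactness at $H_2(X)$ says $\nu_2 = \nu_2' \oplus(-\nu_2'')$ is surjective onto $H_2(X)$. To upgrade this to surjectivity of $\nu_2'$ alone, note $H_2(U) = H_2(L) = \Q\langle[L]\rangle$, and $\nu_2''[L] = [L] = 0$ in $H_2(X)$ by hypothesis; so the $H_2(U)$ summand maps to zero and all of $H_2(X)$ must come from $\nu_2'(H_2(Y))$. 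I expect the main obstacle to be the bookkeeping in (3): making the integral statement rigorous requires tracking that the relevant short exact sequences split integrally and that the two successive quotients compose correctly, whereas (1), (2), (4) are essentially formal consequences of exactness together with the explicit description of $H_*(Z)$, $H_*(U)$ and the meridian/longitude classes.
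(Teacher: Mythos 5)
Your proposal is correct, and for parts (2)--(4) it runs essentially parallel to the paper's proof: the same identification $\ker\rho_1=\ker i_1\cap\ker j_1$ with $\ker j_1=\langle[\mu]\rangle$ for (2), the same bookkeeping in the integral Mayer--Vietoris sequence for (3) (your cokernel formulation, absorbing the $H_1(U;\Z)$ summand via the basis $j_1^\Z\gamma_1,j_1^\Z\gamma_2$ and then quotienting by $i_1^\Z[\mu]$, is just a repackaging of the paper's computation of $\ker\nu_1'$), and for (4) the same chain $[L]=0\Rightarrow i_1[\mu]\neq 0\Rightarrow\rho_1$ injective $\Rightarrow\partial_2=0\Rightarrow\nu_2$ onto, upgraded to $\nu_2'$ via $\nu_2''=0$. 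The one genuine divergence is part (1): the paper argues geometrically, pushing an arbitrary integral $1$-cycle off the $2$-torus $L$ by general position so that it lies in $Y$, which yields $\partial_1=0$ and the surjectivity of $\nu_1'$ in one stroke (and over $\Z$, which is what gets quoted in the proof of (3)); you instead obtain $\partial_1=0$ formally from exactness at $H_0(Z)$ (injectivity of $\rho_0$, valid over $\Z$ as well) and the surjectivity of $\nu_1'$ from the relation $\nu_1''\circ j_1=\nu_1'\circ i_1$ combined with surjectivity of $j_1:H_1(Z)\to H_1(U)$. Both routes are sound; the paper's is shorter and in the same geometric spirit as Lemma \ref{mul}, while yours is purely homological, needs no transversality, and (together with your cokernel version of (3), which does not even use integral surjectivity of $\nu_1'$) would apply verbatim to any gluing along a connected boundary for which $j_1$ is onto.
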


\begin{proof}
\begin{enumerate}

\item
It is clear that any class $a$ in $H_1(X;\Z)$ can be represented by a 1-cycle $C$ disjoint from $L$.
$C$ is also disjoint from $Z$ if the neighborhood $U$ is small enough.
So $\partial_1 a=[C\cap Z]=0$. $C\subset Y$ implies that $\nu_1'$ is surjective.

\item We know $\ker\rho_1=\ker i_1\cap \ker j_1\subset \ker j_1$.
Since $\ker j_1=<[\mu]>$,
$$\ker\rho_1=
 \left\{\begin{array}{ll} 0 & \hbox{if } i_1[\mu]\neq 0 \\
<[\mu]> & \hbox{if } i_1[\mu] = 0 %\hbox{ in } H_1(Y)
\end{array}\right.$$
and $\rho_1$ is injective if and only if $i_1[\mu]\neq 0$. The rank of Im$\rho$ is given from $r($Im$\rho_1)=r(H_1(Z))-r(\ker\rho_1)$.

\item
The sequence \eqref{mv} induces a short exact sequence
$$0\rightarrow H_1(Y;\Z)\oplus H_1(U;\Z)/\ker \nu_1\rightarrow H_1(X;\Z) \rightarrow
\hbox{Im}\partial_1 \rightarrow 0$$
$\partial_1=0$ implies that
$$H_1(Y;\Z)\oplus H_1(U;\Z)/\ker \nu_1 \cong H_1(X;\Z)$$
Because $\nu_1'$ is surjective, we also have
$$H_1(X;\Z)=H_1(Y;\Z)/\ker \nu_1'$$
If $\{[\mu], \gamma_1, \gamma_2\}$ is a basis of $H_1(Z;\Z)$,
then $\hbox{Im}\rho_1 =<([\mu] , 0),(\gamma_1,\gamma_1), (\gamma_2,\gamma_2)>$
and $\gamma_1, \gamma_2\neq 0\in H_1(U;\Z)$.
For $a\in H_1(Y;\Z)$, $a\in\ker \nu_1'$ if and only if $(a, 0)\in \ker\nu_1=$Im$\rho_1$, or $a=ki_1^\Z[\mu]$ for some $k\in\Z$.
So $\ker \nu_1'=<i_1^\Z[\mu]>$ and
 $$H_1(X;\Z)=H_1(Y;\Z)/<i_1^\Z[\mu]>$$

\item
$$\begin{array}{rclr}
[L]=0\in H_2(X) &\Leftrightarrow & [\mu]\neq 0\in H_1(Y) & (\hbox{by Lemma \ref{mul}} )\\
&\Leftrightarrow &\rho_1\hbox{ injective} & (\hbox{by part(2)})\\
&\Leftrightarrow &\partial_2=0 & (\hbox{exactness})\\
&\Leftrightarrow &\nu_2\hbox{ surjective} & (\hbox{exactness})
\end{array}$$
Since $[L]=0$ also implies $\nu_2''=0$, $\nu_2'$ has to be surjective.
\end{enumerate}
\end{proof}

All the results hold if we replace $X$, $L$, $\mu$ by $\tilde{X}, \tilde{L}$ and $\tilde{\mu}$.
Now, we are ready to  compare $X$ and $\tilde{X}$.

\subsubsection{Comparing $H_*(X)$ and $H_*(\tilde{X})$}

Lemma \ref{meridian}, applied to torus surgeries,  gives

\begin{prop}\label{compare}
If $\tilde{X}$ is obtained from $X$ via a torus surgery, then
\begin{enumerate}

\item $\chi(\tilde X)=\chi({X})$, $\sigma(\tilde X)=\sigma(X)$.

\item
$$b_1(\tilde{X})-b_1(X) =
 \left\{\begin{array}{ll} 0 & \hbox{if $i_1[\mu]=0=i_1[\tilde \mu]$ or $i_1[\mu]\neq 0 \neq i_1[\tilde \mu]$}\\
 -1 &\hbox{if $i_1[\mu]=0$ and $i_1[\tilde \mu]\ne 0$}\\
1  & \hbox{if $i_1[\mu]\ne 0$ and $i_1[\tilde \mu]= 0$}
\end{array}\right.
%\hbox{ in }H_1(Y;\R)
$$

\item $|b_1(\tilde{X})-b_1(X)|\leq 1$ and $|b_2(\tilde{X})-b_2(X)|\leq 2$.

\end{enumerate}
\end{prop}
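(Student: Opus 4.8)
The plan is to derive everything from the Mayer--Vietoris sequence \eqref{mv} and its analogue for $\tilde X$, keeping track of the fact that $Y$, $Z$, and $U$ are literally the \emph{same} spaces in both decompositions; only the gluing map $g$ (hence the inclusion-induced maps on $Z$) changes.

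\medskip

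\textbf{Part (1).} Since $\tilde X = Y\cup_g U$ with $U\cong T^2\times D^2$ and $Z\cong T^2\times S^1$, the Euler characteristics satisfy $\chi(\tilde X)=\chi(Y)+\chi(U)-\chi(Z)$; but $\chi(U)=\chi(T^2)=0$ and $\chi(Z)=\chi(T^2\times S^1)=0$, so $\chi(\tilde X)=\chi(Y)=\chi(X)$ by the same computation for $X$. For the signature I would invoke Novikov additivity: $\sigma(\tilde X)=\sigma(Y)+\sigma(U)$ and $\sigma(X)=\sigma(Y)+\sigma(U)$, since $U$ is glued along its entire boundary $Z$ (a closed $3$-manifold) and $\sigma(U)$ does not depend on the parametrization of $\partial U$. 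Hence $\sigma(\tilde X)=\sigma(X)$. (Alternatively one can note that a torus surgery does not change the intersection form's rank mod the radical in a controlled way, but Novikov additivity is cleanest.)

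\medskip

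\textbf{Part (2).} From Lemma \ref{meridian}(1), $\partial_1=0$, so \eqref{mv} in degree $1$ gives the exact sequence
$$
0 \longrightarrow \operatorname{Im}\rho_1 \longrightarrow H_1(Y)\oplus H_1(U) \longrightarrow H_1(X)\longrightarrow 0,
$$
whence $b_1(X) = r(H_1(Y)) + r(H_1(U)) - r(\operatorname{Im}\rho_1) = r(H_1(Y)) + 2 - r(\operatorname{Im}\rho_1)$, using $H_1(U;\Q)\cong H_1(T^2;\Q)$ of rank $2$. The identical formula holds for $\tilde X$ with $\rho_1$ replaced by its analogue $\tilde\rho_1$. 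Since $r(H_1(Y))$ and the contribution of $H_1(U)$ are the same in both cases, $b_1(\tilde X) - b_1(X) = r(\operatorname{Im}\rho_1) - r(\operatorname{Im}\tilde\rho_1)$. Now apply Lemma \ref{meridian}(2): $r(\operatorname{Im}\rho_1)$ equals $3$ if $i_1[\mu]\neq 0$ and $2$ if $i_1[\mu]=0$, and likewise for $\tilde\rho_1$ with $[\tilde\mu]$. Substituting the four combinations of the two binary conditions yields exactly the tabulated values $0,-1,+1,0$.

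\medskip

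\textbf{Part (3).} The bound $|b_1(\tilde X)-b_1(X)|\le 1$ is immediate from (2). For $b_2$, I would combine $\chi(\tilde X)=\chi(X)$ from (1) with $\chi = 2 - 2b_1 + b_2$ (valid since $b_3 = b_1$ for a closed oriented $4$-manifold), giving $b_2(\tilde X)-b_2(X) = 2\bigl(b_1(\tilde X)-b_1(X)\bigr)$. Together with $|b_1(\tilde X)-b_1(X)|\le 1$ this forces $|b_2(\tilde X)-b_2(X)|\le 2$.

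\medskip

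The only genuinely delicate point is the signature invariance in Part (1): one must be sure that Novikov additivity applies in this cut-and-paste setting, i.e. that regluing $U$ to $Y$ by a boundary diffeomorphism does not alter the signature. This is standard but worth stating carefully, since the rest of the proposition is bookkeeping with ranks in the two Mayer--Vietoris sequences. An alternative route to $\sigma(\tilde X)=\sigma(X)$, avoiding Novikov additivity, is to show directly from \eqref{mv} that the nondegenerate parts of the intersection forms of $X$ and $\tilde X$ are isometric to that of the form on $\operatorname{Im}(H_2(Y)\to H_2(X))$, which is the same subquotient in both cases; I would fall back on this if the additivity argument needs more justification than desired.
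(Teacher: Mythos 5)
Your proof is correct and takes essentially the same route as the paper: part (2) is exactly the paper's Mayer--Vietoris count $b_1(X)=b_1(Y)+2-r(\operatorname{Im}\rho_1)$ combined with Lemma \ref{meridian}(2), and part (3) is deduced from (1) and the $b_1$ bound just as in the paper. For part (1), which the paper dismisses as obvious, your additivity of $\chi$ and Novikov additivity for $\sigma$ (gluing along the closed $3$-manifold $Z$, independent of the gluing diffeomorphism) is the standard justification and is fine.
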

\begin{proof}
\begin{enumerate}
\item Obvious.

\item
Since $\partial_1=0$, we can conclude that
$$b_1(X)=b_1(Y)+2-r(\hbox{Im}\rho_1) =
 \left\{\begin{array}{ll} b_1(Y)-1 & \hbox{if } i_1[\mu]\neq 0 \\
b_1(Y) & \hbox{if } i_1[\mu] = 0.
\end{array}\right.$$
The same is true for $b_1(\tilde{X})$ with $i_1[\mu]$ replaced by $i_1[\tilde{\mu}]$.
The proof is finished by comparing $b_1(X)$ and $b_1(\tilde{X})$.
\item The first inequality is given by part (2).
The second inequality follows from part (1) and the first inequality.
\end{enumerate}
\end{proof}

The next result concerns with the intersection forms. 

\begin{prop}\label{comparez}
Suppose $X$ and $\tilde{X}$ are defined as above.
If $[L]$ is a torsion class in $H_2(X;\Z)$ and the intersection form $Q(X)$ is odd, then $Q(\tilde{X})$ is odd as well.
In particular, if both $[L]$ and $[\tilde{L}]$ are torsion, then $\tilde{X}$ and $X$ have the same
intersection form.
\end{prop}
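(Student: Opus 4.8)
The plan is to exploit that $X$ and $\tilde X$ contain the common piece $Y$, together with the fact that the intersection form, pulled back to $H_2(Y;\Z)$, is literally the same bilinear form whether one computes it in $X$ or in $\tilde X$. The first ingredient is that, when $[L]$ is torsion, the composite $\bar\nu_2'\colon H_2(Y;\Z)\xrightarrow{\nu_2'}H_2(X;\Z)\to H_2(X;\Z)/\mathrm{tors}$ is surjective. Since $[L]$ torsion gives $[L]=0$ in $H_2(X;\Q)$, Lemma~\ref{mul} shows $i_1^\Z[\mu]$ is non-torsion in $H_1(Y;\Z)$; hence $\ker\rho_1^\Z=\ker i_1^\Z\cap\langle[\mu]\rangle=0$, so by exactness of \eqref{mv} over $\Z$ the connecting map $\partial_2$ vanishes and $\nu_2^\Z$ is onto $H_2(X;\Z)$. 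The summand $\nu_2''$ sends the generator of $H_2(U;\Z)\cong\Z$ to the torsion class $[L]$, so it dies in $H_2(X;\Z)/\mathrm{tors}$; therefore $\bar\nu_2'$ is onto. Likewise $\bar{\tilde\nu}_2'\colon H_2(Y;\Z)\to H_2(\tilde X;\Z)/\mathrm{tors}$ is onto as soon as $[\tilde L]$ is torsion.

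Next I would introduce the internal pairing $q$ on $H_2(Y;\Z)$: represent classes by closed embedded oriented surfaces in $\mathrm{int}(Y)$, make them transverse, and let $q(a,b)$ be the signed count of intersection points, with $q(a,a)$ the Euler number of the normal bundle in $Y$. Because $\mathrm{int}(Y)$ is open in both $X$ and $\tilde X$ and the normal bundle of such a surface is the same in $Y$, in $X$ and in $\tilde X$, one obtains $q(a,b)=\nu_2'(a)\cdot_X\nu_2'(b)=\tilde\nu_2'(a)\cdot_{\tilde X}\tilde\nu_2'(b)$ for all $a,b\in H_2(Y;\Z)$.

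The first assertion is then immediate: if $Q(X)$ is odd, choose $\alpha\in H_2(X;\Z)$ with $\alpha\cdot\alpha$ odd, write the class of $\alpha$ mod torsion as $\bar\nu_2'(a)$, and note that $\tilde\nu_2'(a)\in H_2(\tilde X;\Z)$ has self-intersection $q(a,a)=\alpha\cdot\alpha$, again odd, so $Q(\tilde X)$ is odd. For the last sentence, suppose both $[L]$ and $[\tilde L]$ are torsion. Since torsion classes pair trivially, $q$ is the pullback of $Q(X)$ along $\bar\nu_2'$ and also the pullback of $Q(\tilde X)$ along $\bar{\tilde\nu}_2'$. As $Q(X)$ and $Q(\tilde X)$ are unimodular, hence nondegenerate, and $\bar\nu_2'$, $\bar{\tilde\nu}_2'$ are onto, both $\ker\bar\nu_2'$ and $\ker\bar{\tilde\nu}_2'$ equal $\mathrm{rad}(q)$; therefore $Q(X)$ and $Q(\tilde X)$ are each isometric to the form induced by $q$ on $H_2(Y;\Z)/\mathrm{rad}(q)$, whence $Q(X)\cong Q(\tilde X)$.

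The point needing genuine care is the \emph{integral} surjectivity onto $H_2(X;\Z)/\mathrm{tors}$: the rational version (Lemma~\ref{meridian}(4)) does not suffice, since a proper finite-index sublattice of $H_2(X;\Z)/\mathrm{tors}$ can contain only even classes. Closely tied to this is the honest check that $q$ is the common pullback of the two intersection forms --- that no (self-)intersection number is created or destroyed when a surface in $\mathrm{int}(Y)$ is regarded in $X$ rather than in $\tilde X$ --- which is precisely where one uses that the surgery modifies $X$ only inside $U$, away from $Y$. The remainder is routine linear algebra of unimodular lattices.
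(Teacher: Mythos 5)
Your proof is correct, and it is worth comparing with the paper's. For the oddness claim your route is essentially the paper's --- produce a surface of odd square lying in $Y$ and view it inside $\tilde X$ --- but the paper justifies the key step by quoting Lemma \ref{meridian}(4), which is a statement about rational homology, and as you rightly stress rational surjectivity of $\nu_2'$ alone does not control parity (an odd lattice can contain finite-index even sublattices). Your integral Mayer--Vietoris step (torsion $[L]$ makes $i_1^{\Z}[\mu]$ non-torsion by Lemma \ref{mul}, hence $\ker\rho_1^{\Z}=\ker i_1^{\Z}\cap\langle[\mu]\rangle=0$, hence $\nu_2^{\Z}$ is onto, and the $H_2(U;\Z)$ summand only contributes the torsion class $[L]$) supplies exactly the surjectivity onto $H_2(X;\Z)/\mathrm{tors}$ that the argument needs, so your version is the careful form of the same idea. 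For the final sentence the routes genuinely differ: the paper states it as an immediate consequence, implicitly combining oddness in both directions (the surgery being reversible) with Proposition \ref{compare} (equal $b_2$ and $\sigma$) and the classification of unimodular forms, which in the definite case rests on Donaldson's theorem; you instead identify both $Q(X)$ and $Q(\tilde X)$ with the form induced by the common pairing $q$ on $H_2(Y;\Z)/\mathrm{rad}(q)$, using that surjectivity onto the torsion-free quotient plus unimodularity forces the kernel of each quotient map to equal the radical. That argument is self-contained, avoids all classification results, and exhibits an explicit isometry through a common lattice; its only additional inputs --- representability of classes of $H_2(Y;\Z)$ by embedded surfaces in the interior, and the fact that transverse intersection counts taken in $\mathrm{int}(Y)$ compute the pairings of both $X$ and $\tilde X$ --- are standard and correctly invoked.
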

\begin{proof}
Since $Q(X)$ is odd, there exists a closed oriented  surface $S$ in $X$ such that $S\cdot S$ is odd.
By Lemma \ref{meridian}(4), $[S]\in \hbox{Im}\nu_2'$ and $S$ can be chosen such that $S\subset Y$.
Thus, $S$ is contained in $\tilde{X}$, and hence, $Q(\tilde{X})$ is also odd.
\end{proof}

%%%%%%%%%%%%%%%%%%%%%%%%%%%%%%%%%%%%%%%%%%%%%%%%%%

\subsection{$\kappa=-\infty$}
By Proposition \ref{K3}, if a symplectic manifold $(X, \omega)$ with $\kappa (X)=-\infty$ has
an almost toric structure $\pi: X\rightarrow B$ and if we apply a Luttinger surgery along a smooth fiber of $\pi$, the new manifold $(\tilde{X}, \tilde{\omega})$ is diffeomorphic to $(X, \omega)$.
Such phenomenon is still true for any 4-manifold with $\kappa=-\infty$.
Moreover, we have the stronger Theorem \ref{int2}.

\begin{proof}[Proof of Theorem \ref{int2}]
Proposition \ref{minimality} allows us to reduce to the case where $(X, \omega)$ is
minimal.

We first show that $\tilde X$ is diffeomorphic to $X$.
Observe that the diffeomorphism types of minimal manifolds with
$\kappa=-\infty$ are distinguished by their
Euler numbers and intersection forms. Since such manifolds have $b^+=1$,
the homology classes of Lagrangian tori are torsion. Thus,
 both quantities are preserved by Proposition \ref{compare} and \ref{comparez}.

To show further that $(\tilde X, \tilde \omega)$ and $(X, \omega)$ are symplectomorphic to each other,
it is enough to show that $\omega$ is cohomologous to $\tilde{\omega}$
(\cite{Mc}).
If $X$ is diffeomorpic to $\C\PP^2$, the symplectic structure is determined by the volume $[\omega]^2$,
which is preserved by Remark \ref{omega}(1).

When $X$ is ruled, $H^2(X)$ is either generated by $K_\omega$ and the Poincar\'e dual to the homology class of a fiber $F=S^2$, or by $K_{\omega}$ and $[\omega]$.
Hence the class of $\omega$ is determined  by
$K_{\omega}\cdot [\omega]$, $[\omega]^2$ and $[\omega](F)$.
As mentioned above, the first two quantities are preserved.
By \cite{W} the fiber sphere can be chosen to be disjoint from $L$, so it follows
that the last quantity is also preserved.
\end{proof}

\subsection{Luttinger surgery as a symplectic CY surgery}

A symplectic CY surface is a symplectic 4-manifold with torsion canonical class, or equivalently, a minimal symplectic
4-manifold with $\kappa=0$.

By Theorem \ref{kod} and Proposition \ref{minimality}, we have

\begin{prop}\label{cy1}
A Luttinger surgery is a symplectic CY surgery in dimension four.
\end{prop}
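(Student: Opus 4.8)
The plan is to combine the two results already established in this section and deduce the statement essentially for free. Recall that Proposition~\ref{cy1} asks us to show: if $(X,\omega)$ is a symplectic CY surface, then any manifold $(\tilde X,\tilde\omega)$ obtained from it by a Luttinger surgery is again a symplectic CY surface. By definition, a symplectic CY surface is a minimal symplectic $4$-manifold with $\kappa=0$, so what must be checked is exactly that both minimality and the value $\kappa=0$ persist under the surgery.

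First I would invoke Proposition~\ref{minimality}: the Luttinger surgery preserves minimality, so $(\tilde X,\tilde\omega)$ is minimal. Then I would invoke Theorem~\ref{kod}: the Luttinger surgery preserves the symplectic Kodaira dimension, so $\kappa(\tilde X)=\kappa(X)=0$. Putting these together, $(\tilde X,\tilde\omega)$ is a minimal symplectic $4$-manifold with $\kappa=0$, i.e.\ a symplectic CY surface, which is precisely what "a Luttinger surgery is a symplectic CY surgery" asserts. One could equivalently phrase the conclusion in terms of the canonical class: by Lemma~\ref{thom} and the identities in the proof of Theorem~\ref{kod}, $K_{\tilde\omega}^2=K_\omega^2$ and $K_{\tilde\omega}\cdot[\tilde\omega]=K_\omega\cdot[\omega]$, and since these both vanish for a minimal $\kappa=0$ manifold, $K_{\tilde\omega}$ is torsion, giving the "torsion canonical class" characterization directly.

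There is essentially no obstacle here: the proof is a one-line corollary of the two preceding results, and the only care needed is to confirm that the "symplectic CY surface" terminology matches the minimal-plus-$\kappa=0$ description, which is stated explicitly at the start of the subsection. If one wanted to be slightly more careful, the mild point worth a remark is that reversibility of Luttinger surgery (discussed in Section~\ref{construction}) means the correspondence between CY surgeries runs in both directions, so a Luttinger surgery and its inverse are both symplectic CY surgeries; but this is not strictly needed for the statement as written. I would therefore present the proof as simply: \emph{Proof.} By Proposition~\ref{minimality} the surgered manifold $(\tilde X,\tilde\omega)$ is minimal, and by Theorem~\ref{kod} it has $\kappa=0$; hence it is a symplectic CY surface. $\qed$
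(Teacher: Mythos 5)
Your proof is correct and is exactly the paper's argument: the proposition is stated there as an immediate consequence of Theorem~\ref{kod} and Proposition~\ref{minimality}, which is precisely the two-step deduction you give. The extra remarks on the torsion canonical class and reversibility are fine but not needed.
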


There is a homological classification of symplectic CY surfaces in \cite{L4} and \cite{B}.

\begin{theorem}\label{cy2}
A symplectic CY surface is an integral homology K3, an integral homology Enriques surface or a rational homology torus bundle over torus.
\end{theorem}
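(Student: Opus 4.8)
The plan is to prove Theorem~\ref{cy2} by combining the known constraints on the characteristic numbers of symplectic CY surfaces with a case analysis governed by $b_1$. First I would record that for any symplectic CY surface $(X,\omega)$ the canonical class $K_\omega$ is torsion, so $K_\omega^2 = 0$ and $K_\omega\cdot[\omega]=0$; by the work of Taubes and the constraints collected in \cite{L4} this forces $(c_1^2, c_2) = (0, 0)$ or more precisely forces the signature $\sigma(X)$ and Euler characteristic $\chi(X)$ into a short list. Using $\sigma = \frac{1}{3}(c_1^2 - 2c_2)$ and $c_1^2 = 2\chi + 3\sigma = 0$, one gets $\sigma = -\tfrac{2}{3}\chi$, hence $\chi$ is divisible by $3$ and $\sigma \le 0$. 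The Noether-type and Bogomolov--Miyaoka--Yau-type inequalities available for symplectic $4$-manifolds with $\kappa = 0$ (again via \cite{L4}) then pin down $\chi(X)\in\{0,12,24\}$, with $\chi = 24,\ \sigma = -16$ in the homology K3 case, $\chi = 12,\ \sigma = -8$ in the Enriques case, and $\chi = 0,\ \sigma = 0$ in the torus-bundle case.

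Next I would split on $b_1(X)$. The constraint $\chi(X) = 2 - 2b_1 + b_2 \ge 0$ together with $b^+ \ge 1$ and the list of characteristic numbers bounds $b_1$. When $\chi > 0$ we are in the $b_1 \le 2$ regime, and here I would invoke the Seiberg--Witten basic-class structure: a symplectic $4$-manifold with torsion canonical class has (up to sign) a single basic class, which is a very restrictive condition. Following \cite{L4}, this forces $X$ to be an integral homology K3 (when $b^+ = 3$, i.e. $\chi = 24$) or an integral homology Enriques surface (when $b^+ = 1$, $\chi = 12$, with the extra $2$-torsion in $H_1$ detected by the fact that the canonical class is $2$-torsion but nonzero). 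When $\chi(X) = 0$, one has $b_1 \le 4$ and $b^+ = b^- $; the homology then has the numerical profile of a torus bundle over a torus, and I would cite the classification of such rational homology types from \cite{L4} and \cite{B} to conclude $X$ is a rational homology torus bundle over torus.

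The genuinely hard input — which I would not reprove but rather cite wholesale — is the Seiberg--Witten-theoretic fact that a minimal symplectic $4$-manifold with $\kappa = 0$ has the homology type forced into exactly these three families; this is the content of \cite{L4} (and \cite{B} for the $b_1 \ne 0$ cases). So the main obstacle in a self-contained account would be establishing that ``torsion canonical class'' plus ``symplectic'' implies the SW wall-crossing and basic-class restrictions that collapse the homology; everything after that is bookkeeping with $\chi$, $\sigma$, $b_1$, and the universal coefficient theorem to separate ``integral homology'' from ``rational homology'' in each family. Given that Theorem~\ref{cy2} is stated here as a citation-backed input to Theorem~\ref{int3}, the proof in this note is simply the assembly above: reduce to the characteristic numbers via $K_\omega$ torsion, and read off the three cases from the cited classification.
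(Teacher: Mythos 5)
Your proposal matches the paper's treatment: Theorem \ref{cy2} is not proved in this note at all but is quoted directly from \cite{L4} and \cite{B}, which is exactly the cite-the-hard-input-wholesale strategy you adopt for the Seiberg--Witten part. The bookkeeping you sketch ($c_1^2=0$, $\sigma=-\tfrac{2}{3}\chi$, the case split by $\chi$, $b_1$ and $b^+$) is consistent with the table of homological invariants the paper reproduces from \cite{L4}, so there is nothing to correct beyond noting that the detailed classification lives in the cited references rather than here.
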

The following table lists possible rational homological invariants of symplectic CY surfaces \cite{L4}:

$$\begin{tabular}{|c|c|c|c|c|c|}
\hline
$b_1$ & $b_2$ & $b^+$ & $\chi$ & $\sigma$ & known manifolds\\
\hline
0 & 22 & 3 & 24 & -16 & K3\\
\hline
0 & 10 & 1 & 12 & -8 & Enriques surface\\
\hline
4 & 6 & 3 & 0 & 0 & 4-torus\\
\hline
3 & 4 & 2 & 0 & 0 & $T^2-$bundles over $T^2$\\
\hline
2 & 2 & 1 & 0 & 0 & $T^2-$bundles over $T^2$\\
\hline
\end{tabular}
$$

\begin{proof}[Proof of Theorem \ref{int3}]
It follows from Propositions \ref{compare}, \ref{cy1}, Theorem \ref{cy2} and the table above.
\end{proof}

It is also speculated that a symplectic CY surface is diffeomorphic to the K3 surface, the Enriques surface or a torus bundle over torus. Thus we make the following

\begin{conj}\label{00} 
If $X$ is a K3 surface, or an Enriques surface, then under a
Luttinger surgery along any embedded Lagrangian
torus, $\tilde X$ is diffeomorphic to X.
\end{conj}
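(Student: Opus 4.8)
\textbf{Proof proposal for Conjecture \ref{00}.}
The plan is to leverage the fact, established in Theorem \ref{int3} together with Proposition \ref{compare} and Proposition \ref{comparez}, that a Luttinger surgery along a Lagrangian torus $L$ in a symplectic $4$-manifold with $\kappa=0$ and $\chi>0$ preserves the integral homology type. For $X$ a K3 surface, this means $\tilde X$ is a simply connected integral homology K3 (so $b^+=3$, $\chi=24$, $\sigma=-16$, even intersection form), and for $X$ an Enriques surface, $\tilde X$ is an integral homology Enriques surface with $\pi_1$ at least of order two. Moreover, by Proposition \ref{cy1} the surgered manifold $(\tilde X,\tilde\omega)$ is again a symplectic CY surface. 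The first step is therefore to observe that $\tilde X$ is a \emph{minimal symplectic $4$-manifold with $\kappa=0$} having exactly the homology (and, in the Enriques case, the fundamental group) of $X$.

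The second step is to invoke the known classification and smooth uniqueness results in each homological class. In the K3 case, one would like to appeal to the statement that a simply connected minimal symplectic $4$-manifold with $b^+>1$ and $K_{\tilde\omega}$ torsion is diffeomorphic to the K3 surface; this would follow from Seiberg--Witten theory (the surgered manifold has, up to sign, a single SW basic class, namely $0$, with the right count) combined with what is currently known about symplectic manifolds homeomorphic to K3. In the Enriques case, one passes to the universal (double) cover: a Luttinger surgery along a Lagrangian torus lifts to a pair of Luttinger surgeries on the K3 double cover (choosing $U$ small enough that it is evenly covered, the construction in section \ref{construction} is equivariant), so the cover $\tilde Y$ of $\tilde X$ is again an integral homology K3 obtained from K3 by Luttinger surgeries, hence (by the K3 case) diffeomorphic to K3; one then argues that a free involution with the right quotient homology makes $\tilde X$ diffeomorphic to the Enriques surface.

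The third step is to reconcile the $\pi_1$ bookkeeping. By \eqref{xy}, $\pi_1(\tilde X)$ differs from $\pi_1(X)$ only through the class of the new meridian, and a Luttinger surgery is a $(1,k)$-surgery, so the meridian relation is of the form $\tilde\mu = \mu + k\gamma_\varphi$; in particular, when $[L]$ is nullhomologous one can control the abelianization via Corollary \ref{mulz} and Lemma \ref{meridian}(3). One must check that the surgery cannot enlarge $\pi_1$ beyond what the homology permits, using that $X$ is simply connected (K3) or has $\pi_1=\Z/2$ (Enriques) and that $L$ lies in a standard position.

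The main obstacle is the K3 base case: there is currently no proof that every simply connected minimal symplectic CY $4$-manifold is diffeomorphic to K3 (this is the notorious open problem underlying the whole speculation preceding the conjecture), so the argument cannot be completed unconditionally with present technology. What \emph{can} be done rigorously is to show that $\tilde X$ is homeomorphic to $X$ (by Freedman, using Step 1 and, in the Enriques case, the $\pi_1$ control of Step 3), and that $\tilde X$ has the Seiberg--Witten invariants of a symplectic CY surface; upgrading homeomorphism to diffeomorphism is exactly the gap, which is why the statement is phrased as a conjecture rather than a theorem. Thus a complete proof is not expected here; the realistic deliverable is the homeomorphism statement plus the reduction, via equivariant Luttinger surgery on the double cover, of the Enriques case to the K3 case.
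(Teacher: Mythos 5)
The statement you were asked about is Conjecture \ref{00}: the paper offers no proof of it, and indeed explicitly frames the diffeomorphism claim as open speculation tied to the unresolved smooth classification of symplectic Calabi--Yau surfaces. Your recognition that the argument cannot be closed --- because there is no theorem identifying a simply connected minimal symplectic $4$-manifold with torsion canonical class as diffeomorphic to K3 --- is exactly the right diagnosis, and your reduction strategy (homological control via Theorem \ref{int3} and Propositions \ref{compare}, \ref{comparez}, plus minimality and $\kappa=0$ via Proposition \ref{cy1}) is consistent with what the paper actually proves, namely Theorem \ref{int3} and nothing stronger.

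However, two of the steps you present as rigorous deliverables overreach. First, Theorem \ref{int3} gives only the integral \emph{homology} type; $H_1(\tilde X;\Z)=0$ does not imply $\pi_1(\tilde X)=1$, since a torus surgery can in principle produce a nontrivial perfect fundamental group (the relation \eqref{xy} and Lemma \ref{meridian}(3) control only the abelianization, and Luttinger surgery is precisely the tool used elsewhere to change $\pi_1$ drastically). So the claim that $\tilde X$ is ``simply connected'' in the K3 case, and hence that homeomorphism to K3 follows from Freedman, is not justified by anything in the paper or in your Step 3; even the homeomorphism statement remains open without an independent argument killing $\pi_1$. Second, in the Enriques case the passage to the double cover assumes the Lagrangian torus $L$ is evenly covered, i.e.\ that the composite $\pi_1(L)\to\pi_1(X)\to\Z/2$ is trivial; if it is surjective, the preimage is a single connected Lagrangian torus double-covering $L$, and the surgery upstairs is an equivariant surgery along one torus rather than ``a pair of Luttinger surgeries,'' so that reduction needs a case analysis you have not supplied. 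With these caveats your write-up is a reasonable account of why the statement is a conjecture, but the ``realistic deliverable'' you promise (homeomorphism plus the covering reduction) is itself not yet a proof of anything beyond what Theorem \ref{int3} already states.
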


As for torus bundles over torus, we have

\begin{conj}\label{torus}
Any smooth oriented torus bundle $X$ over torus possesses a symplectic structure $\omega$ such that $(X,\omega)$ can be obtained by applying Luttinger surgeries to $(T^4,\omega_{std})$.
\end{conj}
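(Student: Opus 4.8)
The plan is to establish Conjecture~\ref{torus} by giving an explicit combinatorial model for oriented torus bundles over the torus and then realizing the gluing data via a sequence of Luttinger surgeries on $(T^4,\omega_{std})$. An oriented $T^2$-bundle over $T^2$ is classified by a pair of commuting monodromy matrices $A, B \in SL(2,\Z)$ together with an Euler class (a choice of cocycle in $H^2(T^2; \Z^2)$ measuring the failure of the bundle to have a section); by the work of Sakamoto--Fukuhara and Ue on the classification of such bundles, every such bundle is obtained from the trivial bundle $T^4$ by twisting along the two base circles and along the base $2$-cell. The first step would be to recognize each of these three twisting operations as a torus surgery along a fiber torus $L = T^2 \times \{pt\}$, with framing the obvious product (Lagrangian) framing: a monodromy twist by a Dehn-type matrix $\left(\begin{smallmatrix} 1 & m \\ 0 & 1 \end{smallmatrix}\right)$ corresponds to cutting along $L$ and regluing by a map that drags one base direction around a curve $\gamma \subset L$, which is precisely the local model $h_k$ of Section~\ref{construction}, while an Euler-class shift corresponds to a $(1,k)$-surgery changing the meridian by $[\tilde\mu] = [\mu] + k[\gamma_\varphi]$.

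The second step is the symplectic bookkeeping: one must produce a symplectic form $\omega_{std}$ on $T^4$ for which all the fiber tori used are simultaneously Lagrangian, so that each surgery in the sequence is genuinely a Luttinger surgery and not merely a topological torus surgery. For $T^4 = (\R^4/\Z^4, dx_1\wedge dy_1 + dx_2 \wedge dy_2)$ the tori $\{x_1, y_1\} \times T^2$ and various parallel translates are Lagrangian, and after each surgery the construction of Section~\ref{construction} equips the result with a symplectic form restricting to the old one away from $L$, so one can iterate, choosing the next Lagrangian torus in the region $X_L$ where nothing has changed. The curves $\gamma \subset L$ needed to realize the two monodromy twists and the Euler twist are three distinct isotopy classes on the fiber, and because the surgeries are supported in disjoint tubular neighborhoods they commute and can be performed in any order; tracking the resulting gluing data shows the output bundle has the prescribed monodromy pair and Euler class.

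The third step is to match the prescribed monodromy: not every commuting pair in $SL(2,\Z)$ is a product of the elementary matrices appearing in the surgery description, so one reduces via the classification of flat $T^2$-bundles over $T^2$ — the monodromy representation $\pi_1(T^2) \to SL(2,\Z)$ has image in a finite or virtually-cyclic subgroup, and each conjugacy class of such commuting pairs is on the short list (finite order, or powers of a single parabolic) that is visibly generated by the moves above after a change of basis of the fiber, which is itself a diffeomorphism of $T^4$. Appealing to Theorem~\ref{almost}(6)--(7) also reassures us that the bundles that do carry almost toric fibrations (monodromy $\{I, \left(\begin{smallmatrix} 1 & m \\ 0 & 1 \end{smallmatrix}\right)\}$) are handled, since these are the output of Luttinger surgery on $T^4$ along a fiber by Proposition~\ref{K3}.

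The main obstacle I anticipate is precisely this last point: controlling which commuting pairs $(A,B) \in SL(2,\Z)$ with nontrivial hyperbolic or elliptic part actually arise, and verifying that the Euler class can be adjusted independently of the monodromy by a Luttinger surgery rather than an arbitrary $(p,k)$-surgery. Luttinger surgery is restricted to $p = 1$, so one must check that all torus bundles over the torus — including those whose natural description involves a genuine multiplicity-$p$ logarithmic transform — can nonetheless be reached with $p=1$ moves only; this is plausible because the fiber of a torus bundle is never null-homologous in a way that forces higher multiplicity, but it requires a careful homological argument paralleling Lemma~\ref{mul} and Lemma~\ref{meridian}. I expect that a clean treatment will combine the explicit generators-and-relations presentation of the mapping class data with the reversibility of Luttinger surgery (Section~\ref{construction}) to run an induction on the complexity of the monodromy, and that the symplectic side will then follow formally from the fact that each step keeps a chosen fiber torus Lagrangian.
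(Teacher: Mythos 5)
The statement you are trying to prove is Conjecture \ref{torus}: the paper itself offers no proof of it, only the remark that for the torus bundles in classes (a), (b) and (d) of Geiges's list \cite{Ge} -- precisely those admitting Lagrangian bundle structures -- the conjecture ``is not hard to verify'' by Luttinger surgeries along Lagrangian fibers. So there is no paper proof to match your proposal against; what has to be judged is whether your sketch actually closes the conjecture, and it does not.

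The decisive gap is your third step. Commuting pairs $(A,B)$ in $SL(2,\Z)$ are not exhausted by ``finite order, or powers of a single parabolic'': there are hyperbolic commuting pairs, e.g.\ $A=\bigl(\begin{smallmatrix}2&1\\1&1\end{smallmatrix}\bigr)$ and $B=A^{m}$ or $B=\pm I$, and these give the Sol-type torus bundles over the torus, which are orientable, symplectic by Geiges's theorem, and genuinely in the family the conjecture addresses; there are also honestly elliptic pairs of order $3$, $4$, $6$. Your surgery moves, being Luttinger surgeries along fibers of a Lagrangian fibration, only compose the monodromy with parabolic shears fixing the direction $[\gamma]$ (this is what the local model $h_k$ of Section \ref{construction} does), and they can shift the Euler class; there is no argument in your sketch producing hyperbolic or elliptic monodromy from the trivial pair by such moves, and it is exactly this case that keeps the statement a conjecture. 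Two further points are asserted rather than proved: that the three twisting operations are realized by surgeries in disjoint neighborhoods and hence ``commute'' (after the first surgery the fibration and its affine structure change, so one must re-verify that the next chosen torus is Lagrangian and that the cumulative gluing data is the prescribed one), and that the restriction to multiplicity $p=1$ costs nothing -- plausible, but requiring the homological argument you yourself flag via Lemmas \ref{mul} and \ref{meridian}. As it stands, your proposal reproves (in outline) the easy part already noted in the paper for the Lagrangian-fibered classes, and leaves the open part of the conjecture untouched.
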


In the list of torus bundles over torus in \cite{Ge}, any manifold in classes
(a), (b) and (d) has a Lagrangian bundle structure.
For any such manifold, it is not hard to verify Conjecture \ref{torus}
via Luttinger surgery along Lagrangian fibers.

\begin{remark}\label{44}
\begin{enumerate}
\item
Conjectures \ref{00} and \ref{torus} are clearly related to Question 2.6 in \cite{A}.

\item
There is another symplectic CY surgery in
dimension six, the symplectic conifold transition.
If $(M^6, \omega)$ with $K_\omega=0$ contains disjoint Lagrangian spheres $S_1, ...,
S_n$ with homology relations generated by
$\sum_{i=1}^n\lambda_i[S_i]=0$ with all $\lambda_i\ne 0$, Smith, Thomas, Yau (\cite{STY}) construct from
$(M^6, \omega)$ a new symplectic manifold $(M',\omega')$ with $K_{\omega'}=0$ and
smaller $b_3$.

\item
We notice that there is a parametrized Luttinger surgery in higher dimension and believe it
also should be a symplectic CY surgery. This will be discussed elsewhere.
\end{enumerate}
\end{remark}

%%%%%%%%%%%%%%%%%%%%%%%%%%%%%%%%%%%%%%%%%%%%%%%%%%%%%%%%%%%%%%%%%%%%%%%%%%%%%%%%%%%%%%%%%%%%%%%%%%%%%%%%%

%%%%%%%%%%%%%%%%%%%%%%%%%%%%%%%%%%%%%%%%%%%%%%%%%%%%%%%%%%%%%%%%%%%%%%%%%%%%%
\section{Topological preferred framing and Lagrangian framing}
In this section, we will introduce topological preferred framings and compare them with the Lagrangian framing for Lagrangian tori in $\kappa \leq 0$ symplectic 4-manifolds.

Suppose $X$ is a smooth 4-manifold and $L\subset X$ is an embedded 2-torus with trivial normal bundle.
Recall that a framing is a diffeomorphism $\varphi :U\rightarrow T^2\times D^2$ for a tubular neighborhood $U$ of $L$ such that $\varphi^{-1}(T^2\times 0)=L$ and a longitudinal curve of $\varphi$ is a lift $\gamma_{\varphi}$ of some simple closed curve $\gamma\subset L$ in $Z$. 
Let
$$H_{1,\varphi}:=<[\gamma_\varphi]|\gamma_\varphi :\hbox{ longitudinal curve of }\varphi>$$
$H_{1,\varphi}$ is a subgroup of $H_1(Z; \Z)$ and it induces a decomposition of $H_1(Z; \Z)$:
$$H_1(Z; \Z)=<[\mu]>\oplus H_{1,\varphi}.$$
Conversely, any rank 2 subgroup $V$ of $H_1(Z;\Z)$ such that $[\mu]$ and $V$ generate $H_1(Z;\Z)$ corresponds to a framing of $L$.

In \cite{Lu}, Luttinger introduced a version of topological preferred framings of Lagrangian tori in $\R^4$.
It requires that $H_{1,\varphi}$ is in the kernel of $i_1^\Z$.
On the other hand, Fintushel and Stern (\cite{FS}) defined null-homologous framings for a null-homologous torus via  $i_2^\Z$  
(seemingly, under the assumption that $H_1(X;\Z)$ vanishes, though  not explicitly mentioned).

The following definition is essentially the same as in \cite{FS}, but without assuming  that $H_1(X;\Z)$ vanishes.

\begin{definition}\label{tprefer}
Suppose $L$ is null-homologous, i.e., $[L]=0$ in $H_2(X;\Z)$. 
A framing $\varphi$ is called a topological preferred framing if $[L_{\varphi}]\in \ker i_2^\Z$.
Here, $L_{\varphi}\subset Z$ is a longitudinal torus of $\varphi$ given by $\varphi^{-1}(T^2\times z)$, $z\in \partial D^2$.
\end{definition}

There is the following  generalization when $[L]$ is a torsion class in $H_2(X;\Z)$. 

\begin{definition}\label{rtprefer}
Assume $[L]$ is a torsion class in $H_2(X;\Z)$. A framing $\varphi$ is called a rational topological preferred framing if $[L_{\varphi}]\in \ker i_2^\Q$.
\end{definition}

When $L$ is null-homologous, it is clear that a topological preferred framing is also a rational topological preferred framing.

%%%%%%%%%%%%%%%%%%%%%%%%%%%%%%%%%%%%%%%%%%%%%%%%%%%%%%

\subsection{Comparing $\ker i_1$ and $\ker i_2$}
In order to compare various preferred framings and the Lagrangian framing, we need to investigate the relation of the maps $i_1$ and $i_2$ given by \eqref{i1} and \eqref{i2}.
Let $Y$ be a smooth oriented 4-manifold with boundary $Z=T^3$.

\begin{lemma}\label{kernel}
The maps $i_{1}$ and $i_{2}$ satisfy the following properties
\begin{enumerate}
\item
$r(\ker i_1)+r(\ker i_2)=3$.
\item
With the pairing
$$H_1(Z)\times H_2(Z)\rightarrow H_0(Z)\cong \Q,$$ 
given by the cap product,
$\ker i_2$ and $\ker i_1$ annihilate each other:
$$\ker i_2=\mathbf{ann}(\ker i_1):=\{c\in H_2(Z)|a\cdot c =0\in H_0(Z)\hbox{ for any }a\in \ker i_1\}$$
and $\ker i_1=\mathbf{ann}(\ker i_2)$.
\item $r(\ker i_1)> 0$.

\end{enumerate}
\end{lemma}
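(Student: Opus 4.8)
The plan is to exploit the long exact sequence of the pair $(Y,Z)$ together with Poincar\'e--Lefschetz duality, since $Z=T^3=\partial Y$ and everything is over $\Q$. Write the long exact sequence
$$\cdots \to H_k(Z)\stackrel{i_k}{\to} H_k(Y)\to H_k(Y,Z)\stackrel{\partial}{\to} H_{k-1}(Z)\to\cdots.$$
Lefschetz duality gives $H_k(Y,Z)\cong H^{4-k}(Y)$ and $H_k(Y)\cong H^{4-k}(Y,Z)$, and under these identifications the connecting map $\partial: H_k(Y,Z)\to H_{k-1}(Z)$ is dual to $i_{4-k}:H_{4-k}(Z)\to H_{4-k}(Y)$ up to sign, with the duality pairing on $Z=T^3$ being the nondegenerate cap/intersection pairing $H_1(Z)\times H_2(Z)\to H_0(Z)\cong\Q$. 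This is the standard ``half lives, half dies'' package, and it is the engine for all three parts.

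For part (2): exactness at $H_1(Z)$ says $\ker i_1 = \mathrm{Im}\,\partial$ where $\partial: H_2(Y,Z)\to H_1(Z)$, while exactness at $H_2(Z)$ says $\ker i_2 = \mathrm{Im}\,\partial'$ where $\partial': H_3(Y,Z)\to H_2(Z)$. Now $\partial$ and $\partial'$ are, via Lefschetz duality and the fact that the long exact sequence of $(Y,Z)$ is ``self-dual,'' adjoint to each other with respect to the intersection pairing on $Z$; equivalently one shows $\mathrm{Im}(H_3(Y,Z)\to H_2(Z))$ is the annihilator of $\mathrm{Im}(H_2(Y,Z)\to H_1(Z))$ by comparing the sequence to the cohomology exact sequence of the pair and invoking that the pairing $H^1(Z)\times H^2(Z)\to H^3(Z)\cong\Q$ is dual to the one on homology. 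Concretely: $a\in\ker i_1$ means $a=\partial b$ for some $b\in H_2(Y,Z)\cong H^2(Y)$; for $c\in H_2(Z)$, $a\cdot c = \partial b\cdot c = \langle b, i_1^{\vee}? \rangle$ — the clean way is to use that $a\cdot c = \langle \delta(\mathrm{PD}\,a), \mathrm{PD}\,c\rangle$ and that $\delta\circ\mathrm{PD} = \mathrm{PD}\circ\partial$ takes $H_*(Z)$ into the image of the restriction $H^*(Y)\to H^*(Z)$, whose annihilator under the perfect pairing on $Z$ is exactly $\ker(H_*(Z)\to H_*(Y))$ by exactness. Running this for both degrees gives $\ker i_2=\mathbf{ann}(\ker i_1)$, and since the pairing on $Z$ is perfect over $\Q$ the relation $\ker i_1=\mathbf{ann}(\ker i_2)$ follows immediately by taking annihilators twice.

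Part (1) is then a dimension count: the perfect pairing $H_1(Z)\times H_2(Z)\to\Q$ has $r(H_1(Z))=r(H_2(Z))=3$, and for any subspace $W\subset H_1(Z)$ one has $r(\mathbf{ann}(W))=3-r(W)$. Applying this with $W=\ker i_1$ and using part (2) gives $r(\ker i_2)=3-r(\ker i_1)$, i.e. $r(\ker i_1)+r(\ker i_2)=3$. For part (3) I would argue that $i_1:H_1(Z;\Q)\to H_1(Y;\Q)$ cannot be injective: if it were, then by part (1) $r(\ker i_2)=3$, so $i_2=0$, i.e. $H_2(Z;\Q)\to H_2(Y;\Q)$ is zero; but then in the long exact sequence the map $H_2(Y)\to H_2(Y,Z)$ is injective while $\partial: H_2(Y,Z)\to H_1(Z)$ is injective on a complement, forcing (via Euler characteristic bookkeeping, using $\chi(Y)=\tfrac12\chi(Z)=0$ for a manifold with $T^3$ boundary, or more directly the rank relations from both exact sequences) a contradiction with $i_1$ injective. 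A cleaner route for (3): the composite $H_1(Z)\to H_1(Y)\to H_1(Y,Z)\cong H^3(Y)$ and duality show that the image of $i_1$ has rank at most $\tfrac12 r(H_1(Z)) = 3/2$, hence at most $1$, so $r(\ker i_1)\ge 3-1 = 2 > 0$. I expect the main obstacle to be pinning down the signs and the precise commutativity of the duality square relating $\partial$ to the dual of $i$ — i.e. verifying carefully that the long exact sequence of $(Y,Z)$ is self-dual under Lefschetz duality so that the annihilator statement in (2) is exact rather than merely an inclusion; once that diagram is in hand, (1) and (3) are short linear algebra.
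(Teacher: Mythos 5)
Your treatment of parts (1) and (2) is essentially sound and runs on the same engine as the paper's proof: the long exact sequence of $(Y,Z)$, Lefschetz/Poincar\'e duality identifying $\partial\colon H_3(Y,Z)\to H_2(Z)$ with the restriction $H^1(Y)\to H^1(Z)$, and naturality of the pairing. You merely reverse the order (equality in (2) directly from perfectness of the pairing over $\Q$, then (1) by dimension count, whereas the paper proves (1) from the short exact sequence $0\to H_2(Y)/\mathrm{Im}\,i_2\to H_2(Y,Z)\to\ker i_1\to 0$ plus $r(H_2(Y,Z))=r(H_2(Y))$, and uses it to upgrade the inclusion $\ker i_2\subset\mathbf{ann}(\ker i_1)$ to equality). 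Once the duality square is checked, both organizations work.

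Part (3), however, has a genuine gap: neither of your two routes is valid. The ``cleaner route'' claims $r(\mathrm{Im}\,i_1)\le\tfrac12 r(H_1(Z))$, hence $\le 1$; this is false. For $Y=T^2\times D^2$ with $Z=T^3$ (or the paper's Section 5 examples built from knot complements and ruled surfaces), $\mathrm{Im}\,i_1$ has rank $2$ and $\ker i_1$ has rank $1$; the ``half lives, half dies'' principle you are importing is the statement for a surface bounding a 3-manifold, where $H_1$ of the boundary carries a self-pairing and the kernel is Lagrangian; for $T^3=\partial Y^4$ the correct duality statement is exactly (1)--(2), which permit $r(\ker i_1)\in\{1,2,3\}$. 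The first route also fails: the identity $\chi(Y)=\tfrac12\chi(Z)$ holds for odd-dimensional $Y$, not here (here $\chi(Z)=0$ automatically and $\chi(Y)$ is unconstrained), and no contradiction follows from rank bookkeeping in the two exact sequences alone. Indeed, a rational-homology configuration with $i_1$ injective, $i_2=0$, $b_1(Y)=3$, $b_3(Y)=0$, $H_2(Y)\cong H_2(Y,Z)$ is perfectly consistent with the long exact sequence and all the duality rank relations, so (3) is not a formal consequence of (1) and (2). What is needed is an input specific to $Z=T^3$: the paper's argument (due to Gompf) takes two embedded tori $T_1,T_2\subset T^3$ meeting transversely in a homologically essential circle $C$; if $i_1$ were injective then $i_2=0$ by (1), so each $T_i$ bounds a 3-manifold $W_i$ in $Y$, and $W_1\cap W_2$ is a surface with boundary $C$, exhibiting $[C]\in\ker i_1$, a contradiction. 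Equivalently one can argue cohomologically that the triple cup product vanishes on $\mathrm{Im}(H^1(Y;\Q)\to H^1(Z;\Q))$ because $i_*[Z]=0$ in $H_3(Y)$, while injectivity of $i_1$ would force this image to be all of $H^1(T^3;\Q)$, where the triple product is nontrivial. Either way, an extra geometric or cup-product ingredient beyond your linear algebra is required.
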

\begin{proof}
\begin{enumerate}
\item Consider the exact sequence
$$\cdots  \stackrel{\partial_2}{\longrightarrow} H_2(Z) \stackrel{i_{2}}{\longrightarrow}
 H_2(Y)  \stackrel{\delta_2}{\longrightarrow}
 H_2(Y, Z) \stackrel{\partial_{1}}{\longrightarrow}
 H_1(Z) \stackrel{i_{1}}{\longrightarrow}
 H_1(Y) \stackrel{\delta_1}{\longrightarrow} \cdots$$
 It induces a short exact sequence
  \begin{equation}\label{se}
0  \longrightarrow
 H_2(Y)/\hbox{Im} i_{2}  \stackrel{\nu_2}{\longrightarrow}
 H_2(Y, Z) \stackrel{\partial_{1}}{\longrightarrow} \hbox{Im}\partial_{1}=\ker i_{1}\longrightarrow 0
 \end{equation}
 By Lefschetz duality and universal coefficient theorem,
 $$H_2(Y, Z)\cong H^2(Y) \cong H_2(Y)$$ 
and $r(H_2(Y,Z))=r(H_2(Y))$. So \eqref{se} implies that
$$r(\ker i_1)=r(\hbox{Im}i_2)=r(H_2(Z))-r(\ker i_2)$$
which is (1).

\item
Consider the dual pairing
$$\begin{array}{ccccc}
H_1(Y)& \times & H^1(Y) & \stackrel{}{\rightarrow} & H_0(Y)\\
\uparrow i_1& & \downarrow j&& \uparrow\cong\\
H_1(Z)& \times & H^1(Z) & \stackrel{}{\rightarrow} & H_0(Z)\\
\end{array}$$

Because the maps $i_1$ and $j$ are induced by embedding and restriction, this pairing is natural, i.e., for $a\in H_1(Z)$ and $\alpha\in H^1(Y)$,
$$<i_1(a), \alpha>=<a, j(\alpha)>$$

There is an isomorphism of long exact sequences induced naturally by Lefschetz and Poincar\'e dualities:
$$\begin{array}{rcccccccccl}
\cdots \stackrel{}{\longrightarrow}
& H^1(Y, Z) & \stackrel{}{\longrightarrow}
& H^1(Y) & \stackrel{}{\longrightarrow}
& H^1(Z) & \stackrel{j}{\longrightarrow}
& H^2(Y, Z) & \stackrel{}{\longrightarrow} \cdots\\
 & \downarrow \cap [Y] & & \downarrow \cap [Y] & & \downarrow \cap [Z] & & \downarrow \cap [Y] &\\
\cdots \stackrel{i_{3}}{\longrightarrow}
& H_3(Y) & \stackrel{\delta_3}{\longrightarrow}
& H_3(Y, Z) & \stackrel{\partial_{2}}{\longrightarrow}
& H_2(Z) & \stackrel{i_{2}}{\longrightarrow}
& H_2(Y) & \stackrel{\delta_2}{\longrightarrow} \cdots
\end{array}$$

Using this diagram, the dual pairing induces the intersection pairing:
$$\begin{array}{ccccc}
&& \downarrow\\
&& H_3(Y) \\
&& \downarrow\\
H_1(Y)& \times & H_3(Y,Z) & \stackrel{}{\rightarrow} & H_0(Y)\\
\uparrow i_1& & \quad \downarrow \partial =\partial_2&& \uparrow\cong\\
H_1(Z)& \times & H_2(Z) & \stackrel{}{\rightarrow} & H_0(Z)\\
&& \downarrow i_2\\
&& H_2(Y) \\
&& \downarrow\\
\end{array}$$

Given $z_2\in \ker i_2$, there exists $z_3\in  H_3(Y,Z)$ such that $\partial z_3=z_2$.
Let $\beta$ be the Lefschetz dual of $z_3$ in $H^1(Y)$.
For any $z_1\in \ker i_1$,
$$z_1\cdot z_2=z_1\cdot \partial z_3=i_1(z_1)\cdot z_3=<i_1(z_1), \beta>=0$$
It shows that $\ker i_2\subset \mathbf{ann}(\ker i_1)$. 
From part (1),
$$r(\mathbf{ann}(\ker i_1))=r(H_2(Z))-r(\ker i_1)=r(\ker i_2).$$  
So $\ker i_2=\mathbf{ann}(\ker i_1)$.
Similar argument shows that $\ker i_1=\mathbf{ann}(\ker i_2)$.

\item
Suppose $r(\ker i_1)=0$, then $r(\ker i_2)=3$ and $i_2$ is the zero map by part (1).
Let $T_1, T_2$ be two nonisotopic embedded tori in $Z$ intersecting in a curve $C$ transversely.
Since $Z=T^3$, $[T_1]\cap [T_2]=[C]\neq 0$ in $H_1(Z)$.
Meanwhile, each $T_i$ bounds a 3-manifold $W_i$ in $Y$.
$W_1\cap W_2$ is a 2-cycle
whose boundary is $C$ and $[C]$ is in the kernel of $i_1$,
which contradicts the assumption that $i_1$ is injective.
\end{enumerate}
\end{proof}

Here  is a geometric interpretation of this lemma.
Assume $z_2$ is an integral class of $\ker i_2$ and $C$ is a closed curve in $Z$ such that $[C]\cdot z_2\neq 0$ in $Z$.
There exists a relative 3-cycle $W$ in $(Y, Z)$ such that $[\partial W]=z_2$.
In particular, we can assume that $W$ intersects $Z$ transversely and $\partial W$ intersects $C$ transversely at $a_1, \cdots , a_p$ and $b_1, \cdots,b_n$ in $Z$ with positive and negative intersections respectively.
Furthermore, we can give a collar structure $V\cong Z\times [0,\epsilon )$ near $Z$ and assume $W\cap V=\partial W\times [0,\epsilon)$.
If we push $C$ to  $C'=C\times\frac{\epsilon}{2}$ in the interior of $Y$,
then $C'$ and $W$ intersect transversely at $a_1\times\frac{\epsilon}{2}, \cdots , a_p\times\frac{\epsilon}{2}$ and $b_1\times\frac{\epsilon}{2}, \cdots, b_n\times\frac{\epsilon}{2}$ with positive and negative intersections respectively. Hence $[C']=i_1([C])$ and
$$[C']\cdot [W]=[C]\cdot [\partial W]=p-n=[C]\cdot z_2\neq 0$$
So $[C]$ can not be in $\ker i_1$.

\begin{remark}
\begin{enumerate}
\item
Part (1) of Lemma \ref{kernel} is still true in arbitrary dimension.
If $Y$ is a $(n+1)$-dimensional manifold with connected boundary $Z$ and $i_k: H_k(Z)\rightarrow H_k(Y)$ denotes the homomorphism induced by the inclusion $Z\rightarrow Y$, then
$$r(\ker i_{k-1})+r(\ker i_k)=r(H_k(Z))$$
for $2\leq k\leq n-1$.
\item Part (3) of Lemma \ref{kernel} is pointed out by Robert Gompf.
\end{enumerate}
\end{remark}

In the following, we give examples to illustrate Lemma \ref{kernel} according to $r(\ker i_1)$.

\begin{enumerate}
\item
Let $K_0$ be the trivial knot in $S^3$ and $X=S^1\times S^3$.
The complement of the torus $L=S^1\times K_0$ is
$$Y=S^1\times (S^3-K_0)\cong S^1\times (S^1\times D^2)$$
If $t, m$ denote the isotopy classes of these two $S^1$ and $l=\partial D^2$,
then $H_1(Z)=<[t], [m], [l]>$ and $\ker i_1$ has rank 1 which is  generated by $[l]$.
On the other hand, $\ker i_2$ is generated by $[l\times t], [l\times m]$ and has rank 2.

In general, if $K$ is any knot in $S^3$ and $S$ is a Seifert surface with boundary $K$,
we can define $t$ and $m$ as above and choose $l$ as the push-off of $K$ in $S$.
Then $Y$ and $T^2\times D^2$ have isomorphic homology groups and $\ker i_1$ is still generated by $l$, which bounds the surface $S$.
Similarly, $\ker i_2$ has rank 2 and is generated by $[l\times t], [l\times m]$.
They bound $S^1\times S$ and $\{pt\}\times (S^3-K)$ respectively.
In \cite{FS0},  Fintushel and Stern use these manifolds as building blocks to define knot surgery in 4-manifolds.

In the next example, the results of Lemma \ref{kernel} are not obvious.
Let $\pi:X\rightarrow \Sigma_g$ be a ruled surface and the loop $\gamma\subset X$ be a lift of a loop in $\Sigma_g$.
We can construct a torus $L$ in $X$ as the product of $\gamma$ and some circle $b$ in the fiber.
If $\mu\subset Z$ is a meridian of $L$ and $\pi(\gamma)$ is nontrivial in $\pi_1(\Sigma_g)$, it is easy to show that $\ker i_1$ is generated by a push-off of $b$.
But $\ker i_2$ is not obvious even when $X=S^2\times \Sigma_g$ is the trivial bundle.
By Lemma \ref{kernel}, we know that $\ker i_2$ has rank 2 and is generated by $[\mu\times b]$ and $[\mu\times \gamma]$.

\item
Let $L=a\times b$ be the Clifford torus embedded in the rational manifold $X= \C\PP^2$.
The group $H_1(Z)$ is generated by $[a], [b]$ and the meridian $[\mu]$.
It is easy to show that $\ker i_1 = <[a], [b]>$ has rank 2 and $\ker i_2 = <[a\times b]>$.

In general, if $X$ is simply connected and $L\subset X$ is a torus with trivial normal bundle,
then $r(\ker i_1)=2$ if and only if $[L]=0$ in $H_2(X)$.

\item 
If $r(\ker i_1)=3$, it follows from Lemma \ref{meridian} that such surgery will not change the homology for any torus surgery.
\end{enumerate}

There are similar results for Lemma \ref{kernel} over $\Z$ if we consider $r(\cdot)$ as the rank of abelian groups. In particular, the following lemma is the analogue of \ref{kernel}(2).

\begin{lemma}\label{zker}
With the pairing
 $$H_1(Z;\Z)\times H_2(Z;\Z)\rightarrow H_0(Z;\Z)\cong \Z,$$
given by the cap product, $\ker i_2^\Z$ annihilates $\ker i_1^\Z$:
$$\ker i_2^\Z\subset \mathbf{ann}_\Z(\ker i_1^\Z)=\{c\in H_2(Z;\Z)|a\cdot c=0\in H_0(Z;\Z)\hbox{ for any }a\in \ker i_1^\Z\}.$$
\end{lemma}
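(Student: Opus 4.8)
The plan is to run the proof of Lemma~\ref{kernel}(2) essentially verbatim, but with $\Z$-coefficients throughout, checking that each ingredient it uses is available integrally. Given an arbitrary $z_2\in\ker i_2^\Z$, the first step is to produce a preimage under the connecting map: the long exact sequence of the pair $(Y,Z)$ is exact over $\Z$, so $\ker i_2^\Z=\hbox{Im}\,\partial_2$ and there is a relative $3$-cycle $z_3\in H_3(Y,Z;\Z)$ with $\partial_2 z_3=z_2$. No torsion issue arises here because we invoke exactness at $H_2(Z;\Z)$ directly rather than any rank count.

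Next, using Lefschetz duality $H_3(Y,Z;\Z)\cong H^1(Y;\Z)$ for the compact oriented $4$-manifold $Y$ with boundary, let $\beta\in H^1(Y;\Z)$ be dual to $z_3$. For any $z_1\in\ker i_1^\Z$ the same chain of equalities as in the rational case goes through: the cap product $z_1\cdot z_2$, computed in $Z$, equals $z_1\cdot\partial_2 z_3$, which by naturality of the cap product under $Z\hookrightarrow Y$ --- i.e.\ the commutativity of the duality diagrams appearing in the proof of Lemma~\ref{kernel}(2), all of which hold with $\Z$-coefficients --- equals the intersection number $i_1^\Z(z_1)\cdot z_3$ in $(Y,Z)$, and this in turn equals the Kronecker pairing $\langle i_1^\Z(z_1),\beta\rangle$. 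Since $z_1\in\ker i_1^\Z$ we have $i_1^\Z(z_1)=0$, hence $z_1\cdot z_2=0$; as $z_1$ was arbitrary, $z_2\in\mathbf{ann}_\Z(\ker i_1^\Z)$. Equivalently, one may argue geometrically in the style of the remark following Lemma~\ref{kernel}: represent $z_1$ by a curve $C\subset Z$, push it into the interior of $Y$ to get $C'$ with $[C']=i_1^\Z(z_1)=0$ in $H_1(Y;\Z)$, represent $z_2$ as $\partial W$ for a relative $3$-cycle $W$ in $(Y,Z)$, and use bilinearity of the intersection pairing $H_1(Y;\Z)\times H_3(Y,Z;\Z)\to H_0(Y;\Z)\cong\Z$ to get $[C]\cdot z_2=[C']\cdot[W]=0$.

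There is no genuinely hard step; the only thing to watch is that one records the \emph{integral} versions of all the maps involved. It is also worth noting why this gives only an inclusion rather than the equality of Lemma~\ref{kernel}(2): over $\Q$ the reverse inclusion was obtained from the dimension identity of Lemma~\ref{kernel}(1) together with non-degeneracy of the rational cap-product pairing on $H_*(T^3;\Q)$, and neither the rank bookkeeping nor that non-degeneracy need persist once torsion in $H_*(Y;\Z)$ is present. The one-sided statement is all that is needed later, so I would not attempt to sharpen it.
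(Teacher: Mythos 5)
Your proof is correct, but it takes a different route from the paper's. You re-run the duality argument of Lemma~\ref{kernel}(2) directly with $\Z$-coefficients: exactness of the pair sequence gives $\ker i_2^\Z=\hbox{Im}\,\partial$, integral Lefschetz duality for the compact oriented $Y$ provides $\beta\in H^1(Y;\Z)$, and naturality of the cap product reduces $z_1\cdot z_2$ to a pairing with $i_1^\Z(z_1)=0$ --- in fact, as your geometric variant makes clear, the adjunction $z_1\cdot\partial z_3=i_1^\Z(z_1)\cdot z_3$ plus bilinearity already finishes the argument, so the duality step is not strictly needed. The paper instead disposes of the lemma in two lines by reduction to the rational case: any $a\in\ker i_1^\Z$ lies in $\ker i_1^\Q=\mathbf{ann}(\ker i_2^\Q)$ by Lemma~\ref{kernel}(2), and since the pairing takes values in $H_0(Z;\Z)\cong\Z$, which injects into $\Q$, rational vanishing forces integral vanishing. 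The paper's reduction is shorter and reuses work already done; your version is self-contained, makes explicit that every ingredient (exactness, Lefschetz duality, naturality of intersection pairings) is available over $\Z$, and your closing remark correctly identifies why only the inclusion, not the equality of Lemma~\ref{kernel}(2), survives integrally --- the reverse inclusion relied on the rank count and nondegeneracy of the rational pairing, which torsion can obstruct.
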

\begin{proof}
By Lemma \ref{kernel}(2), $\ker i_1^\Q=\mathbf{ann}(\ker i_2^\Q)$.
If $H_1(Z;\Z)$ is considered as the integral elements of $H_1(Z;\Q)$, then $\ker i_1^\Q =\ker i_1^\Z\otimes \Q$ and $\ker i_1^\Z\subset \ker i_1^\Q$.
So $\ker i_1^\Z \subset \mathbf{ann}(\ker i_2^\Q)=\mathbf{ann}(\ker i_2^\Z)$.
\end{proof}

%%%%%%%%%%%%%%%%%%%%%%%%%%%%%%%%%%%%%%%%%%%%%%%%%%%

%%%%%%%%%%%%%%%%%%%%%%%%%%%%%%%%%%%%%%%%%%%%%%%%%%%

%%%%%%%%%%%%%%%%%%%%%%%%%%%%%%%%%%%%%%%%%%%%%%%%%%%%%

%%%%%%%%%%%%%%%%%%%%%%%%%%%%%%%%%%%%%%%%%%%%%%%%%%%

\subsection{Preferred framings via $\ker i_1$}\label{PF}
Now we characterize topological preferred framings via $i_1$.
We first consider the rational ones.
\begin{prop}\label{rexist}
Assume $[L]=0$ in $H_2(X;\Q)$ and $\varphi$ is a framing of $L$.
Then $\varphi$ is a rational topological preferred framing if and only if $\ker i_1^\Q\subset H_{1,\varphi}\otimes \Q$.
\end{prop}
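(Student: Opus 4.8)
The plan is to deduce the statement from the annihilator duality of Lemma~\ref{kernel}(2). Abbreviate $\ker i_k^\Q$ to $\ker i_k$, write $V_\varphi := H_{1,\varphi}\otimes\Q \subseteq H_1(Z;\Q)$, and use the intersection pairing $H_1(Z;\Q)\times H_2(Z;\Q)\to H_0(Z;\Q)\cong\Q$ on $Z\cong T^3$, which is perfect; in particular, for any subspace $A$ of $H_1(Z;\Q)$ or $H_2(Z;\Q)$ one has $\dim\mathbf{ann}(A)=3-\dim A$. (The hypothesis $[L]=0$ in $H_2(X;\Q)$ is needed only so that Definition~\ref{rtprefer} applies to $\varphi$.)

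The one geometric input is the identification
$$\mathbf{ann}(V_\varphi)=\Q\langle[L_\varphi]\rangle,\qquad\text{equivalently}\qquad\mathbf{ann}\bigl(\Q\langle[L_\varphi]\rangle\bigr)=V_\varphi,$$
which I would establish by two intersection computations in $Z$, using the product structure $Z\cong T^2\times S^1$ furnished by $\varphi$. The meridian $\mu$ is a fiber $\{\mathrm{pt}\}\times S^1$ and meets the longitudinal torus $L_\varphi=\varphi^{-1}(T^2\times z)$ transversally in a single point, so $[\mu]\cdot[L_\varphi]=\pm1$; in particular $[L_\varphi]\neq 0$ in $H_2(Z;\Q)$. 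Any longitudinal curve $\gamma_\varphi$ lies in $\varphi^{-1}(T^2\times z)$ for the corresponding $z$, and choosing $z'\neq z$ on $\partial D^2$ exhibits a disjoint parallel copy of $L_\varphi$, so $[\gamma_\varphi]\cdot[L_\varphi]=0$; since such curves generate $H_{1,\varphi}$, this gives $[L_\varphi]\in\mathbf{ann}(V_\varphi)$. As $\dim V_\varphi=2$, the subspace $\mathbf{ann}(V_\varphi)$ is $1$-dimensional, and containing the nonzero class $[L_\varphi]$ it equals $\Q\langle[L_\varphi]\rangle$. The same two facts show $V_\varphi\subseteq\mathbf{ann}\bigl(\Q\langle[L_\varphi]\rangle\bigr)$, which is $2$-dimensional because $[L_\varphi]\neq 0$, whence the two are equal.

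With this in hand the proposition is a short formal argument. By Definition~\ref{rtprefer}, $\varphi$ is a rational topological preferred framing iff $[L_\varphi]\in\ker i_2$, and by Lemma~\ref{kernel}(2), $\ker i_2=\mathbf{ann}(\ker i_1)$. If $\ker i_1\subseteq V_\varphi$, then every $z\in\ker i_1$ lies in $V_\varphi$ and hence pairs trivially with $[L_\varphi]\in\mathbf{ann}(V_\varphi)$; thus $[L_\varphi]\in\mathbf{ann}(\ker i_1)=\ker i_2$, so $\varphi$ is rational topological preferred. Conversely, if $[L_\varphi]\in\ker i_2=\mathbf{ann}(\ker i_1)$, then $z\cdot[L_\varphi]=0$ for all $z\in\ker i_1$, i.e.\ $\ker i_1\subseteq\mathbf{ann}\bigl(\Q\langle[L_\varphi]\rangle\bigr)=V_\varphi$. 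This is exactly the asserted equivalence.

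I expect no genuine obstacle: the whole content is the identification $\mathbf{ann}\bigl(\Q\langle[L_\varphi]\rangle\bigr)=V_\varphi$, and the rest is bookkeeping with Lemma~\ref{kernel}(2). The only points requiring a little care are the sign/orientation conventions in $[\mu]\cdot[L_\varphi]=\pm1$ and the transversality used to separate $\gamma_\varphi$ from $L_\varphi$ inside $Z$; both are immediate from the product description of $Z$ determined by the framing $\varphi$.
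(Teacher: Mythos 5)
Your proposal is correct and follows essentially the same route as the paper: both reduce the statement to the annihilator duality of Lemma~\ref{kernel}(2) together with the identification $\mathbf{ann}\bigl(\langle[L_\varphi]\rangle\bigr)=H_{1,\varphi}\otimes\Q$ under the intersection pairing on $Z\cong T^3$. The only difference is that the paper leaves that identification implicit in its chain of equivalences, whereas you verify it explicitly via the computations $[\mu]\cdot[L_\varphi]=\pm1$ and $[\gamma_\varphi]\cdot[L_\varphi]=0$, which is a harmless (indeed clarifying) elaboration.
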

\begin{proof}
\begin{eqnarray*}
&&\varphi\hbox{  is a rational topological preferred framing}\\
&\Leftrightarrow &  [L_{\varphi}]\in \ker i_2^\Q\\
&\Leftrightarrow &  <[L_{\varphi}]>\subset \ker i_2^\Q\\
&\Leftrightarrow &  \mathbf{ann}(<[L_{\varphi}]>)\supset \mathbf{ann}(\ker i_2^\Q)\\
&\Leftrightarrow & \ker i_1^\Q\subset H_{1,\varphi}\otimes \Q. \ \hbox{(Lemma \ref{kernel})}
\end{eqnarray*}
\end{proof}

In the integral cases, we have

\begin{prop}\label{exist}
Suppose $L$ is null-homologous and $\varphi$ is a framing of $L$. Then
\begin{enumerate}
\item
$L$ has topological preferred framings, and
\item
$\ker i_1^\Z\subset H_{1,\varphi}$ if $\varphi$ is a topological preferred framing.
\end{enumerate}
\end{prop}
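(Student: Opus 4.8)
The plan is to deduce everything from Lemma \ref{kernel} together with the integral annihilation statement in Lemma \ref{zker}, handling the torsion in $H_2(Y;\Z)$ and $H_1(Y;\Z)$ by hand. First recall the setup: since $L$ is null-homologous, Corollary \ref{mulz} tells us that $i_1^\Z[\mu]$ is non-torsion in $H_1(Y;\Z)$, and Lemma \ref{kernel}(2) (together with example (2) in the discussion after Lemma \ref{kernel}) gives $r(\ker i_1^\Q)=2$, so $r(\ker i_2^\Q)=1$. For part (1), I would produce a topological preferred framing directly. Choose an integral class $e\in H_2(Z;\Z)\cong \Z^3$ that generates $\ker i_2^\Q\cap H_2(Z;\Z)$, i.e.\ a primitive integral generator of the rank-one subgroup of classes that become torsion (in fact zero, since $H_2(Y;\Z)$ modulo its torsion contains $\mathrm{Im}\,i_2^\Z$) in $H_2(Y;\Z)$. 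Actually, because $i_2^\Z$ has image of rank $r(\ker i_1^\Q)=2$ by \eqref{se}, its kernel has rank $1$; pick a primitive $e\in\ker i_2^\Z$. Next I must check that $e$ is ``unimodular'' in the sense needed to be realized as $[L_\varphi]$ for a framing: a framing corresponds to a rank-$2$ subgroup $V\subset H_1(Z;\Z)$ with $H_1(Z;\Z)=\langle[\mu]\rangle\oplus V$, and $[L_\varphi]\in H_2(Z;\Z)$ is the class dual (under the cap pairing $H_1\times H_2\to H_0\cong\Z$) to a generator of the line $\mathrm{ann}_\Z(V)$; concretely, writing $[\mu],\gamma_1,\gamma_2$ for a basis adapted to $V=\langle\gamma_1,\gamma_2\rangle$, the torus $L_\varphi$ is (up to sign) $\gamma_1\times\gamma_2$, whose homology class pairs to $1$ with $[\mu]$ and to $0$ with $\gamma_1,\gamma_2$. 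So I need: there is a basis $[\mu],\gamma_1,\gamma_2$ of $H_1(Z;\Z)$ with $[\gamma_1\times\gamma_2]=\pm e$. Since $e$ is primitive in $H_2(Z;\Z)\cong\Z^3$, Poincar\'e duality on $T^3$ identifies $e$ with a primitive class in $H_1(Z;\Z)\cong\Z^3$, hence $e=\pm[\gamma_1\times\gamma_2]$ for some basis $\gamma_1,\gamma_2$ of a rank-$2$ summand $V$; complete to a basis of $H_1(Z;\Z)$ by some $\nu$. The remaining point is that we may take $\nu=[\mu]$: we need $[\mu]\cdot e\ne 0$, equivalently $[\mu]\notin\mathrm{ann}(e)$. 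But $\mathrm{ann}_\Q(e)\supset$ … here is where I use that $e$ generates $\ker i_2^\Q$, so by Lemma \ref{kernel}(2), $\mathrm{ann}_\Q(\langle e\rangle)=\mathrm{ann}_\Q(\ker i_2^\Q)=\ker i_1^\Q$, and $[\mu]\notin\ker i_1^\Q$ by Corollary \ref{mulz}. Hence $[\mu]\cdot e\ne 0$; after rechoosing $\nu$ within its coset we may arrange $[\mu]=\nu$ exactly since $[\mu]$ is primitive and pairs unimodularly — this last normalization is the one slightly fiddly lattice step. The resulting $V$ defines a framing $\varphi$ with $[L_\varphi]=\pm e\in\ker i_2^\Z$, which is a topological preferred framing.

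For part (2), suppose $\varphi$ is a topological preferred framing, so $[L_\varphi]\in\ker i_2^\Z$. I want $\ker i_1^\Z\subset H_{1,\varphi}$. By Lemma \ref{zker}, $\ker i_2^\Z\subset\mathrm{ann}_\Z(\ker i_1^\Z)$, so $[L_\varphi]\cdot a=0$ for every $a\in\ker i_1^\Z$. Now I use the cap-product description of $H_{1,\varphi}$: with respect to the decomposition $H_1(Z;\Z)=\langle[\mu]\rangle\oplus H_{1,\varphi}$ and the induced description $[L_\varphi]=\pm[\gamma_1\times\gamma_2]$ where $\gamma_1,\gamma_2$ is a basis of $H_{1,\varphi}$, the class $[L_\varphi]$ pairs to $\pm1$ with $[\mu]$ and to $0$ with everything in $H_{1,\varphi}$; more to the point, $\mathrm{ann}_\Z([L_\varphi])$, viewed inside $H_1(Z;\Z)$ via Poincar\'e duality on $T^3$, equals exactly $H_{1,\varphi}$ — because an integral $1$-class $a$ satisfies $a\cdot[\gamma_1\times\gamma_2]=0$ iff $a$ has no $[\mu]$-component, i.e.\ $a\in H_{1,\varphi}$. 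Therefore $\ker i_1^\Z\subset\mathrm{ann}_\Z([L_\varphi])=H_{1,\varphi}$, which is (2).

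The main obstacle, and the place I would spend real care, is the integrality/primitivity bookkeeping in part (1): Lemma \ref{kernel} is stated rationally, so I must upgrade ``$e$ generates $\ker i_2^\Q$'' to ``$e$ is primitive in $H_2(Z;\Z)$'' and then verify that a primitive integral class in $H_2(T^3;\Z)$ is always of the form $\pm[\gamma_1\times\gamma_2]$ for a basis $\gamma_1,\gamma_2$ of a rank-$2$ direct summand (this is the standard fact that $GL_3(\Z)$ acts transitively on primitive vectors of $\Z^3$, transported through the duality $H_2(T^3;\Z)\cong H^1(T^3;\Z)\cong H_1(T^3;\Z)$), and finally that the complementary direction can be taken to be $[\mu]$, using $[\mu]\cdot e\ne 0$ from Corollary \ref{mulz} and primitivity of $[\mu]$. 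None of these steps is deep, but they are exactly the kind of lattice argument where an unjustified ``clearly'' would be a gap, so I would write them out explicitly rather than leave them to the reader. Part (2), by contrast, is a formal consequence of Lemma \ref{zker} once the cap-product description of $H_{1,\varphi}$ is in place.
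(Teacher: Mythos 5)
Your part (2) is correct and is essentially the paper's own argument: from $[L_\varphi]\in\ker i_2^\Z$, Lemma \ref{zker} and the identity $\mathbf{ann}_\Z(\langle[L_\varphi]\rangle)=H_{1,\varphi}$ immediately give $\ker i_1^\Z\subset H_{1,\varphi}$. Part (1), however, has a genuine gap. Your opening claim that $r(\ker i_1^\Q)=2$, hence $r(\ker i_2^\Q)=1$, does not follow from $[L]=0$: the example you invoke assumes $X$ simply connected, which Proposition \ref{exist} does not. The paper's own first example is a counterexample: $L=S^1\times K\subset S^1\times S^3$ is null-homologous, yet $r(\ker i_1)=1$ and $r(\ker i_2)=2$. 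In that situation $\mathbf{ann}(\langle e\rangle)$ is strictly larger than $\ker i_1^\Q$ for your chosen primitive $e\in\ker i_2^\Z$, so the inference ``$[\mu]\notin\ker i_1^\Q$, hence $[\mu]\cdot e\neq0$'' breaks down; concretely, $[l\times\mu]$ is a primitive class of $\ker i_2^\Z$ there and pairs to zero with $[\mu]$, so the recipe ``take any primitive $e\in\ker i_2^\Z$'' can produce a class realized by no framing at all.

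The second, deeper problem is the step you flag as ``fiddly'': to realize $e$ as $[L_\varphi]$ for a framing complementary to $[\mu]$ you need $[\mu]\cdot e=\pm1$, not merely $[\mu]\cdot e\neq0$; if $[\mu]\cdot e=m$ with $|m|\geq2$, no change of basis fixing $e$ makes $[\mu]$ a complementary basis vector. This unimodularity is precisely where the integral hypothesis $[L]=0$ in $H_2(X;\Z)$ must enter, and your argument never uses it beyond Corollary \ref{mulz} — an input that holds equally when $[L]$ is a nonzero torsion class, a case in which (as the paper notes right after Proposition \ref{exist}) such framings can fail to exist; so the proposal proves too much from too little. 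The paper sidesteps all of this geometrically: a $3$-chain $W$ with $\partial W=L$ is arranged so that $W\cap U=\varphi^{-1}(T^2\times S_x)$ for a suitable framing $\varphi$; then $W\cap Z$ is a longitudinal torus of $\varphi$ and $W\cap Y$ exhibits $[L_\varphi]\in\ker i_2^\Z$. If you prefer an algebraic route in the spirit of your proposal, the correct one is the integral Mayer--Vietoris sequence for $X=Y\cup\overline U$: since $[L]=0$ in $H_2(X;\Z)$, exactness produces $c\in H_2(Z;\Z)$ with $i_2^\Z(c)=0$ whose image in $H_2(\overline U;\Z)$ is $[L]$, and every such $c$ differs from a reference longitudinal-torus class by an integral combination of the classes $[\mu\times\gamma]$, hence is the longitudinal-torus class of some framing; that framing is then topological preferred.
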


\begin{proof}
\begin{enumerate}
\item
Since $[L]=0$ in $H_2(X;\Z)$, there exists a 3-chain $W$ such that $\partial W=L$. We can assume that $W$ intersects $Z$ transversely.
In fact, we can choose a framing $\varphi :U\rightarrow T^2\times D^2$ such that $W\cap U=\varphi^{-1}(T^2\times S_x)$, where $S_x=\{(x, 0)\in D^2|x\geq 0\}$.
Then $W\cap Z=\varphi^{-1}(T^2\times (1, 0))$ is a longitudinal torus of $\varphi$ and $W\cap Y$ is a relative 3-cycle of $(Y, Z)$ with $\partial (W\cap Y)=W\cap Z$.
So $[W\cap Z]\in \ker i_2^\Z$ and $\varphi$ is a topological preferred framing.

\item
$[L_{\varphi}]\in \ker i_2^\Z$ implies that $\mathbf{ann}_\Z([L_{\varphi}])\supset \mathbf{ann}_\Z(\ker i_2^\Z)$.
It is easy to observe that $\mathbf{ann}_\Z(<[L_\varphi]>)=H_{1,\varphi}$.
By Lemma \ref{zker}, $$\ker i_1^\Z\subset \mathbf{ann}_\Z(\ker i_2^\Z)\subset \mathbf{ann}_\Z([L_{\varphi}])=H_{1,\varphi}.$$
\end{enumerate}
\end{proof}

If $[L]$ is torsion in $X$, Proposition \ref{exist}(1) may fail in two situations.
First, there may exist $a\in H_1(Z; \Z)$ such that $a\notin \ker i_1^\Z$ but $ka\in \ker i_1^\Z$ for some nonzero integer $k$.
So we can only define rational topological preferred framings.
Second, $[\mu]$ and $\ker i_1^\Z$ might not generate the group $H_1(Z; \Z)$.
In this case, rational topological preferred framings also do not exist.

\begin{remark}\label{lambda}
\begin{enumerate}
\item
In knot theory, the notion of preferred framings is similar to that of Definition \ref{tprefer}.
Let $M$ be an integral homology 3-sphere and $K\subset M$ be a knot.
If $V$ is a tubular neighborhood of $K$, a diffeomorphism $h:S^1\times D^2\rightarrow V$ satisfying $h(S^1\times 0)=K$ is called a framing of $K$.
Furthermore, $h$ is called a preferred framing if $h(S^1\times a)$ is homologically trivial in $M-V$.
For any knot $K$ in $M$, preferred framings exist and are unique up to isotopy (\cite{Ro}).

\item 
It is easy to see from Proposition \ref{exist} that Luttinger's definition coincides with Definition \ref{tprefer} when $X=\R^4$.

\item
In \cite{FS}, an invariant $\lambda(L)$ is defined when $[L]=0$ and $L$ has a unique topological preferred framing $\varphi_0$.
Assume $\varphi_{Lag}$ is the Lagrangian preferred framing.
Then $\varphi_{Lag}=\varphi_0$ if and only if  $\lambda(L)=0$.
Otherwise, $\lambda(L)$ is the smallest positive integer $k$ such that $k[\mu]+[\gamma_\varphi]\in H_{1,\varphi_{Lag}}$ for some $[\gamma_\varphi]\in H_{1,\varphi_0}$.

\end{enumerate}
\end{remark}

%%%%%%%%%%%%%%%%%%%%%%%%%%%%%%%%%%%%%%%%%%%%%%%%

\subsection{$(1,k)$-surgeries and topological preferred framings}

The following proposition relates rational topological preferred framings and $(1,k)$-surgeries.

\begin{prop}\label{r1k}
Suppose $X$ is a smooth 4-manifold and $L\subset X$ is a torus with trivial normal bundle such that $[L]=0$ in $H_2(X;\Q)$ and $\varphi$ is a framing of $L$. Let $\tilde{X}=X_{(L, \varphi,\gamma, 1, k)}$ be constructed from $X$ via $(1,k)$-surgery along $(L,\varphi,\gamma)$.
\begin{enumerate}
\item
If $\varphi$ is a rational topological preferred framing of $L$, then  $\tilde{X}$ satisfies
$$r(H_1(\tilde{X}))=r(H_1(X))$$
for any $\gamma$ and $k$.
\item
If $H_1(\tilde{X};\Z)\cong H_1(X;\Z)$ for any $\gamma$ and $k$, then $\varphi$ is a rational topological preferred framing of $L$.
\end{enumerate}
\end{prop}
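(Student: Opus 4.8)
The plan is to analyze how $H_1$ changes under a $(1,k)$-surgery using the tools already assembled, especially Lemma \ref{meridian} and the characterization of rational topological preferred framings in Proposition \ref{rexist}. The key observation is that a $(1,k)$-surgery replaces the meridian $[\mu]$ of $L$ by a new meridian $[\tilde\mu] = [\mu] + k[\gamma_\varphi]$ of $\tilde L$ in $H_1(Z;\Z)$, where $[\gamma_\varphi]$ is the longitudinal curve of $\varphi$ carrying the isotopy class of $\gamma$. Since $Y = X_L$ is the common complement and the inclusion $i\colon Z \to Y$ is unchanged, Proposition \ref{compare}(2) tells us that $r(H_1(\tilde X)) = r(H_1(X))$ precisely when $i_1[\mu]$ and $i_1[\tilde\mu]$ are either both zero or both nonzero in $H_1(Y;\Q)$. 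Because $[L]=0$ in $H_2(X;\Q)$, Lemma \ref{mul} gives $i_1[\mu]=0$, so for part (1) I need only show $i_1[\tilde\mu]=0$ for every $\gamma$ and $k$, and for part (2) I need to show that if $i_1[\tilde\mu]=0$ for every $\gamma$ and $k$ then $\varphi$ is rational topological preferred.

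For part (1): since $\varphi$ is a rational topological preferred framing, Proposition \ref{rexist} gives $\ker i_1^\Q \subset H_{1,\varphi}\otimes\Q$. But $i_1[\mu]=0$ means $[\mu]\in\ker i_1^\Q \subset H_{1,\varphi}\otimes\Q$; combined with $[\gamma_\varphi]\in H_{1,\varphi}$, we get $[\tilde\mu] = [\mu]+k[\gamma_\varphi] \in H_{1,\varphi}\otimes\Q$. However, I actually want $i_1[\tilde\mu]=0$, which requires $[\tilde\mu]\in\ker i_1^\Q$, not merely membership in $H_{1,\varphi}\otimes\Q$. The cleaner route: $i_1[\tilde\mu] = i_1[\mu] + k\, i_1[\gamma_\varphi] = k\, i_1[\gamma_\varphi]$, so it suffices to show $i_1[\gamma_\varphi]=0$ in $H_1(Y;\Q)$ for every longitudinal curve of $\varphi$, i.e. $H_{1,\varphi}\otimes\Q \subset \ker i_1^\Q$. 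Since $[L]=0$ in $H_2(X;\Q)$ forces $r(\ker i_1^\Q)=2$ by Lemma \ref{kernel}(1)--(3) (equivalently the discussion after Lemma \ref{zker}), and $\ker i_1^\Q \subset H_{1,\varphi}\otimes\Q$ with both sides of rank $2$, we in fact get equality $\ker i_1^\Q = H_{1,\varphi}\otimes\Q$; hence $i_1[\gamma_\varphi]=0$ and $i_1[\tilde\mu]=0$ for all $\gamma,k$, giving $r(H_1(\tilde X))=r(H_1(X))$.

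For part (2): I argue contrapositively. Suppose $\varphi$ is \emph{not} a rational topological preferred framing; by Proposition \ref{rexist}, $\ker i_1^\Q \not\subset H_{1,\varphi}\otimes\Q$. Since $\ker i_1^\Q$ has rank $2$ (using $[L]=0$ in $H_2(X;\Q)$) and $H_{1,\varphi}\otimes\Q$ has rank $2$ as well, but the decomposition $H_1(Z;\Q) = \langle[\mu]\rangle \oplus H_{1,\varphi}\otimes\Q$ holds, failure of containment means there is some class in $\ker i_1^\Q$ of the form $c[\mu] + w$ with $c\neq 0$ and $w \in H_{1,\varphi}\otimes\Q$. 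Rescaling, $[\mu] + w' \in \ker i_1^\Q$ for some $w'\in H_{1,\varphi}\otimes\Q$; clearing denominators and using that $H_{1,\varphi}$ is spanned by longitudinal curves, I can find an integer $k$ and an integral longitudinal curve $[\gamma_\varphi]$ with $[\mu] + k[\gamma_\varphi]$ lying in $\ker i_1^\Q$ up to a nonzero rational multiple — more carefully, since $i_1[\mu] + k\, i_1[\gamma_\varphi]$ ranges over a coset structure, I choose $\gamma$ and $k$ so that $[\tilde\mu]=[\mu]+k[\gamma_\varphi]\notin\ker i_1^\Q$ while demonstrating the \emph{existence} of a $(1,k')$-surgery with $i_1[\tilde\mu]\neq 0$: indeed since $i_1[\mu]=0$ but $i_1$ does not kill all of $H_{1,\varphi}\otimes\Q$ (else $\ker i_1^\Q \supset H_{1,\varphi}\otimes\Q$, contradicting the rank count together with $[\mu]\in\ker i_1^\Q$), there is a longitudinal curve $\gamma_\varphi$ with $i_1[\gamma_\varphi]\neq 0$, and then $i_1[\mu + k\gamma_\varphi] = k\, i_1[\gamma_\varphi] \neq 0$ for $k\neq 0$. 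By Proposition \ref{compare}(2) this surgery satisfies $r(H_1(\tilde X)) = r(H_1(X)) - 1 < r(H_1(X))$, so in particular $H_1(\tilde X;\Z)\not\cong H_1(X;\Z)$, contradicting the hypothesis. Hence $\varphi$ must be a rational topological preferred framing.

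The main obstacle I anticipate is bookkeeping the $\Z$-versus-$\Q$ distinction carefully in part (2): the hypothesis is stated integrally ($H_1(\tilde X;\Z)\cong H_1(X;\Z)$) but the desired conclusion is about the rational framing property, so I must be sure that a mismatch in $\Q$-ranks is actually produced by some \emph{integral} surgery datum $(\gamma,k)$ — this is where the fact that $H_{1,\varphi}$ is generated by genuine longitudinal curves (integral classes) and that $\langle[\mu]\rangle \oplus H_{1,\varphi} = H_1(Z;\Z)$ integrally is essential, letting me pick an integral $\gamma_\varphi$ with $i_1^\Z[\gamma_\varphi]$ non-torsion and hence $i_1^\Q[\gamma_\varphi]\neq 0$. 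The rank computation $r(\ker i_1^\Q)=2$ under $[L]=0$ in $H_2(X;\Q)$ is the other ingredient, and it is exactly the content recorded in the second enumerated example after Lemma \ref{zker}, so I will simply cite it.
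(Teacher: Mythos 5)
Your argument is built on a reversed reading of Lemma \ref{mul}, and this error undermines both parts. Lemma \ref{mul} says $[\mu]\in\ker i_1$ \emph{if and only if} $[L]\neq 0$ in $H_2(X;\Q)$; since here $[L]=0$, the meridian satisfies $i_1[\mu]\neq 0$, not $i_1[\mu]=0$ as you assert. In part (1) this matters twice: the correct route (and the paper's) is to note that $b_1(X)=b_1(Y)-1$ because $i_1[\mu]\neq 0$, and that the rational preferred condition $\ker i_1^\Q\subset H_{1,\varphi}\otimes\Q$ (Proposition \ref{rexist}) together with the splitting $H_1(Z;\Z)=\langle[\mu]\rangle\oplus H_{1,\varphi}$ forces $[\tilde\mu]=[\mu]+k[\gamma_\varphi]\notin\ker i_1^\Q$, so $b_1(\tilde X)=b_1(Y)-1=b_1(X)$. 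Your alternative, showing $i_1[\tilde\mu]=0$ via the claim that $[L]=0$ forces $r(\ker i_1^\Q)=2$, is not available: that equivalence in the paper's example is stated only for simply connected $X$, and it fails in general (e.g.\ $X=S^3\times S^1$, $L=K\times S^1$ has $[L]=0$ but $r(\ker i_1)=1$). Note also that $i_1[\mu]=0$ would contradict $\ker i_1^\Q\subset H_{1,\varphi}\otimes\Q$, since $[\mu]\notin H_{1,\varphi}\otimes\Q$.

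Part (2) collapses for the same reason. With the correct sign of Lemma \ref{mul}, a change of rational rank occurs only when $i_1[\tilde\mu]=0$ (Proposition \ref{compare}(2) then gives $b_1(\tilde X)=b_1(X)+1$, not $-1$), so your plan of exhibiting a longitudinal curve with $i_1[\gamma_\varphi]\neq 0$ produces no contradiction at all: both meridians would map to nonzero classes and the rank is unchanged. Moreover, failure of the containment $\ker i_1^\Q\subset H_{1,\varphi}\otimes\Q$ only yields a \emph{rational} kernel element $c[\mu]+w$ with $c\neq 0$, and there is no reason this can be realized by integral surgery data $(\gamma,k)$ with $[\mu]+k[\gamma_\varphi]\in\ker i_1$; the defect may be invisible to Betti numbers. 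This is exactly why the paper's proof of (2) works integrally with torsion: writing $a=s[\mu]+t[\gamma_\varphi]\in\ker i_1^\Z$ and performing the $(1,kt)$-surgeries along $\gamma$, one gets $[\tilde\mu]=(1-sk)[\mu]+ka$, hence $H_1(\tilde X;\Z)=H_1(Y;\Z)/\langle(1-sk)\,i_1^\Z[\mu]\rangle$, and if $s\neq 0$ the torsion of these quotients varies with $k$ (since $i_1^\Z[\mu]$ is non-torsion by Corollary \ref{mulz}), contradicting $H_1(\tilde X;\Z)\cong H_1(X;\Z)$ for all $k$; thus $s=0$, $\ker i_1^\Z\subset H_{1,\varphi}$, and tensoring with $\Q$ one concludes via Proposition \ref{rexist}. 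You anticipated the $\Z$-versus-$\Q$ issue, but your resolution does not supply this torsion argument, which is the essential content of part (2).
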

\begin{proof}
\begin{enumerate}
\item
By Lemma \ref{meridian}(3), we have
$$r(H_1(X))=
 \left\{\begin{array}{ll} r(H_1(Y))-1 & \hbox{if } i_1[\mu]\neq 0 \\
r(H_1(Y)) & \hbox{if } i_1[\mu]=0.
\end{array}\right.$$
Lemma \ref{mul} implies that $i_1[\mu]\neq 0$ in $H_2(Y)$. 
Since $\varphi$ is a rational topological preferred framing, Proposition \ref{rexist} implies that 
$[\mu]+k[\gamma_\varphi]\notin \ker i_1$ for any integer $k$ and $[\gamma_\varphi]\in H_{1,\varphi}$.
So
$$r(H_1(\tilde{X}))=r(H_1(Y))-1=r(H_1(X))$$
if $\tilde{X}$ is given via $(1, k)$-surgery.

\item
We first prove that any $a\in \ker i_1^\Z$ lies in $H_{1,\varphi}$.
Let $a=s[\mu]+t[\gamma_\varphi]$ for some $s, t\in \Z$ and $\gamma\subset L$ (Recall that $\gamma_\varphi$ is a lift of $\gamma$).
For $\tilde{X}=X_{(L,\varphi,\gamma, 1, kt)}$, the meridian of $\tilde{L}$ in $\tilde{X}$ satisfies
\begin{equation}\label{mad}
[\tilde{\mu}]=[\mu]+kt[\gamma_\varphi]=[\mu]+k(a-s[\mu])=(1-sk)[\mu]+ka
\end{equation}
So

$\begin{array}{rclr}
H_1(\tilde{X};\Z) &=& H_1(Y;\Z)/<i_1^\Z((1-sk)[\mu]+ka)> &
$(by \eqref{xy})$\\
&=& H_1(Y;\Z)/<i_1^\Z((1-sk)[\mu])> & (a\in \ker i_1^\Z)\\
&\cong& H_1(Y;\Z)/<i_1^\Z[\mu]>
\end{array}$

Because $[\mu]$ is essential in $Y$ and $k$ is arbitrary, $s$ should be zero and $a\in H_{1,\varphi}$.
Otherwise, $H_1(X;\Z)$ has infinitely many torsion classes with different orders.

Tensoring with $\Q$, we have 
$$\ker i_1 =\ker i_1^\Z\otimes \Q \subset H_{1,\varphi}\otimes \Q$$
By Proposition \ref{rexist}, $\varphi$ is a rational topological preferred framing. 
\end{enumerate}
\end{proof}

For integral cases, we have

\begin{prop}\label{1k}
Suppose $H_1(X;\Z)$ has no torsion and $L$ is null-homologous. 
Then a framing $\varphi$ of $L$ is a topological preferred framing if and only if
 $H_1(\tilde{X};\Z)\cong H_1(X;\Z)$ for any $\tilde{X}=X_{(L, \varphi,\gamma, 1, k)}$ obtained from $X$ via $(1,k)$-surgery.
\end{prop}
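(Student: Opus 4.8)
The plan is to run the integral analogue of the argument behind Proposition \ref{r1k}, tracking \emph{primitivity of the new meridian} rather than merely the rank of $H_1$. Keep the notation $Y$, $Z=\partial Y=T^3$, $[\mu]$, $[\tilde\mu]=[\mu]+k[\gamma_\varphi]$ from Section \ref{TF}. The first move is to exploit the hypothesis that $H_1(X;\Z)$ is torsion free. By Lemma \ref{meridian}(3), $H_1(X;\Z)\cong H_1(Y;\Z)/\langle i_1^\Z[\mu]\rangle$; since $i_1^\Z[\mu]$ is non-torsion (Corollary \ref{mulz}), the torsion subgroup of $H_1(Y;\Z)$ injects into this quotient, so $H_1(Y;\Z)$ is itself torsion free and $i_1^\Z[\mu]$ is a primitive vector in it. For the surgered manifold, $H_1(\tilde X;\Z)\cong H_1(Y;\Z)/\langle i_1^\Z[\tilde\mu]\rangle$, and the quotient of a free group by an element is isomorphic to $H_1(X;\Z)\cong\Z^{b_1(Y)-1}$ precisely when that element is primitive (otherwise either the rank jumps or a cyclic torsion summand appears). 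So the statement reduces to: $\varphi$ is a topological preferred framing $\iff$ $i_1^\Z[\tilde\mu]$ is primitive in $H_1(Y;\Z)$ for every choice of $\gamma$ and $k$.

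Next I would set up a Poincar\'e--Lefschetz duality criterion for this primitivity. Since $H_1(Y;\Z)$ is torsion free, the pairing $H_1(Y;\Z)\times H_3(Y,Z;\Z)\to\Z$ identifies $H_1(Y;\Z)$ with $\Hom(H_3(Y,Z;\Z),\Z)$, so a class is primitive iff it pairs to $1$ with some element of $H_3(Y,Z;\Z)$. Combining this with the facts that $\partial\colon H_3(Y,Z;\Z)\to H_2(Z;\Z)$ has image $\ker i_2^\Z$ and that $i_1^\Z(a)\cdot\beta=a\cdot\partial\beta$, one gets: $i_1^\Z[\tilde\mu]$ is primitive in $H_1(Y;\Z)$ $\iff$ there is $c\in\ker i_2^\Z$ with $[\tilde\mu]\cdot c=1$, the pairing now being the (unimodular) intersection form on $Z=T^3$. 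With this in hand the forward direction is immediate: if $\varphi$ is topological preferred then $[L_\varphi]\in\ker i_2^\Z$, and since the longitudinal torus satisfies $[\mu]\cdot[L_\varphi]=\pm1$ and $[\gamma_\varphi]\cdot[L_\varphi]=0$ (it is dual to the meridian and annihilates $H_{1,\varphi}$), we get $[\tilde\mu]\cdot(\pm[L_\varphi])=1$; hence $i_1^\Z[\tilde\mu]$ is primitive for all $\gamma,k$ and $H_1(\tilde X;\Z)\cong H_1(X;\Z)$ always.

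For the converse, the first step is exactly the bookkeeping used to prove Proposition \ref{r1k}(2): writing $a\in\ker i_1^\Z$ as $s[\mu]+t[\gamma_\varphi]$ and performing the $(1,kt)$-surgery shows $H_1(\tilde X;\Z)\cong H_1(Y;\Z)/\langle(1-sk)i_1^\Z[\mu]\rangle$, so if this is torsion free for all $k$ then $s=0$; thus $\ker i_1^\Z\subseteq H_{1,\varphi}$, i.e., $\varphi$ is at least a rational topological preferred framing. It then remains to promote "$[L_\varphi]\in\ker i_2^\Z\otimes\Q$" to "$[L_\varphi]\in\ker i_2^\Z$". Here the ingredients I would assemble are: (i) $\ker i_1^\Z\subseteq H_{1,\varphi}$ and $[\gamma_\varphi]\cdot[L_\varphi]=0$ give $[L_\varphi]\in\mathbf{ann}_\Z(\ker i_1^\Z)$; (ii) $\ker i_2^\Z\subseteq\mathbf{ann}_\Z(\ker i_1^\Z)$ (Lemma \ref{zker}) and these have equal rank $3-r(\ker i_1)$ (Lemma \ref{kernel}(1)); (iii) the duality criterion of the previous paragraph, applied to \emph{all} $w=k[\gamma_\varphi]\in H_{1,\varphi}$, says that for every $w$ some $c_w\in\ker i_2^\Z$ has $([\mu]+w)\cdot c_w=1$, and this, together with the torsion-freeness of $H_1(Y;\Z)$ (whence $H_2(Y,Z;\Z)\cong H^2(Y;\Z)$ is torsion free), should be enough to pin $\ker i_2^\Z$ down to all of $\mathbf{ann}_\Z(\ker i_1^\Z)$, so that $[L_\varphi]\in\mathbf{ann}_\Z(\ker i_1^\Z)=\ker i_2^\Z$.

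The routine parts are the reduction to primitivity, the duality criterion, and the forward direction. The delicate step is the last one: extracting from the entire family of primitivity conditions (one for each longitude and each $k$) that no residual finite index can hide between $\ker i_2^\Z$ and $\mathbf{ann}_\Z(\ker i_1^\Z)$ — equivalently, that the torsion coming from an imprimitive longitudinal torus is always detected by \emph{some} $(1,k)$-surgery. This is where I expect the main work to lie, and where the torsion-freeness of $H_1(X;\Z)$ (and hence the structural constraint it forces on $i_1^\Z(H_1(Z;\Z))\subseteq H_1(Y;\Z)$) must be used in full.
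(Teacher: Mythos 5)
Your forward direction and your reduction of the whole statement to ``$i_1^\Z[\tilde\mu]$ is primitive in the free group $H_1(Y;\Z)$ for all $\gamma,k$'' are correct, and they are essentially the paper's argument in dual form: the paper obtains primitivity geometrically, by taking the $3$-chain $W$ with $\partial W=L$ from Proposition \ref{exist} (so that $W\cap Z=L_{\varphi}$ when $\varphi$ is preferred) and noting that $\tilde\mu$ meets $W$ in one point, whereas you phrase the same computation through Lefschetz duality and the intersection pairing on $Z=T^3$; both then quotient the free group $H_1(Y;\Z)$ by a primitive element. Likewise the first half of your converse, extracting $\ker i_1^\Z\subset H_{1,\varphi}$ (hence a \emph{rational} topological preferred framing) from the Proposition \ref{r1k}(2) bookkeeping, is exactly what the paper does.

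The gap is the last step of the converse, which you explicitly defer (``should be enough'', ``where I expect the main work to lie''): promoting $[L_\varphi]\in\ker i_2^\Q$ to $[L_\varphi]\in\ker i_2^\Z$. This deferred step is the entire integral content of the ``if'' direction, and your intended route--identifying $\ker i_2^\Z$ with all of $\mathbf{ann}_\Z(\ker i_1^\Z)$--amounts to proving that $\ker i_2^\Z$ is a saturated subgroup of $H_2(Z;\Z)$, i.e.\ that $i_2^\Z$ never sends a class that dies rationally to a \emph{nonzero torsion} class of $H_2(Y;\Z)$. That is not a formal consequence of your ingredients (i)--(iii): torsion-freeness of $H_1(X;\Z)$ does make $H_1(Y;\Z)$ free, but it does not kill the torsion of $H_2(Y;\Z)$, which by Lefschetz duality and the universal coefficient theorem is the torsion of $\mathrm{coker}(i_1^\Z)\cong H_1(X;\Z)/\mathrm{Im}\bigl(H_1(L;\Z)\to H_1(X;\Z)\bigr)$, and this quotient can perfectly well be nontrivial. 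Note also where the difficulty actually sits: if $r(\ker i_1)=2$, then $\ker i_1^\Z$ is saturated of rank $2$ (since $H_1(Y;\Z)$ is torsion free), hence equals both $H_{1,\varphi}$ and the subgroup $H_{1,\varphi_0}$ of a preferred framing from Proposition \ref{exist}(1), and you are done; the genuinely delicate case is $r(\ker i_1)=1$, where one must either exploit the full family of primitivity conditions quantitatively or find the direct argument the paper invokes when it asserts that the no-torsion hypothesis upgrades ``rational topological preferred'' to ``topological preferred''. As written, your proposal stops exactly at the point where the paper's proof performs this final step, so the ``if'' direction is not yet established.
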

\begin{proof}
Assume $\varphi$ is a topological preferred framing.
Consider the 3-chain $W$ given in the proof of Proposition \ref{exist}.
It is clear that a meridian $\mu$ intersects $W$ at one point.
So $i_1^\Z[\mu]\cdot [W]=\pm 1$ and $i_1^\Z[\mu]$ is a primitive class.
Similarly, the simple closed curve $\tilde{\mu}$ has class $[\mu]+k[\gamma_\varphi]$ and is homotopic to a curve intersecting $W$ at one point.
Hence $i_1^\Z[\tilde{\mu}]$ is also a primitive class for any $\gamma,k$.
Since $H_1(Y;\Z)$ is a free abelian group, we have $H_1(Y,\Z)/<i_1^\Z[\mu]>\cong H_1(Y,\Z)/<i_1^\Z[\tilde{\mu}]>$.
So $H_1(X,\Z)\cong H_1(\tilde{X};\Z)$ by Lemma \ref{meridian}(3).

Conversely, if $H_1(X,\Z)\cong H_1(\tilde{X};\Z)$ for any $\tilde{X}$, the proof of Proposition \ref{r1k}(2) shows that $\varphi$ is a rational topological preferred framing.
The assumption that $H_2(X;\Z)$ has no torsion implies that $\varphi$ is actually a topological preferred framing.
\end{proof}

The knot surgery in \cite{FS0} is an example of $(1,k)$-surgeries.
Suppose $X_0=S^3\times S^1$, $K$ is a knot in $S^3$ and $L=K\times S^1$.
Let $\mu$ be the meridian of $L$, $a$  the longitude of the preferred framing of $K$ and $b=S^1$.
Then $\ker i_1^\Z=<[a]>$ and $\ker i_2^\Z=<[\mu\times a], [a\times b]>$.
Consider the framings $\varphi_p$ of $L$ where $H_{1,\varphi_p}$ is generated by $p[\mu]+[a]$ and $[b]$.
It is clear that $\varphi_p$ is a topological preferred framing of $L$ exactly when $p=0$.

If $K$ is the trivial knot and $[\gamma]=p[\mu]+[a]$, the resulting manifold of $(1, k)$-surgery is
$$S^3_{(L, \varphi_p, \gamma, 1, k)}\cong L(1+pk, k)\times S^1$$
and $H_1(S^3_{(L, \varphi_p, \gamma, 1, k)};\Z)\cong \Z_{1+kp}\oplus \Z$.
If $|p|>1$, $H_1(S^3_{(L, \varphi_p, \gamma, 1, k)};\Z)$ has rank $1$ for any $p ,k$, but the torsion subgroups vary.
Propositions \ref{1k} and \ref{r1k} show that $\varphi_p$ is not a rational topological preferred framing.
Actually, such framings do not exist for $L$.

\subsection{Constraints for Lagrangian framings}

Here we provide topological constraints on the isotopy classes of Lagrangian tori in many symplectic manifolds with non-positive Kodaira dimension.
In particular, they imply that the invariant $\lambda(L)$ of Fintushel and Stern (Remark \ref{lambda}(3)) is zero if the manifold has non-positive Kodaira dimension and vanishing integral $H_1$.
Recall that the Lagrangian framing for Lagrangian tori 
is defined in section \ref{construction}.

\begin{prop}\label{prefer}
Suppose $L$ is a Lagrangian torus in $(X,\omega)$  and
any Luttinger surgery along $L$ preserves the integral homology. 

\begin{enumerate}
\item If $H_2(X;\Z)$ is torsion free and $L$ is null-homologous,  then the Lagrangian framing of $L$ is a topological preferred framing.

\item If $[L]$ is torsion  then the Lagrangian framing of $L$ is a rational topological preferred framing.
\end{enumerate}

\end{prop}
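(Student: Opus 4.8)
The plan is to derive Proposition \ref{prefer} from the $(1,k)$-surgery characterizations of preferred framings already established, namely Propositions \ref{1k} and \ref{r1k}. The key observation is that a Luttinger surgery along $L$ with respect to the Lagrangian framing $\varphi_{Lag}$ is, topologically, precisely a $(1,k)$-surgery along $(L,\varphi_{Lag},\gamma)$ in the sense of section \ref{construction}. So the hypothesis ``any Luttinger surgery along $L$ preserves the integral homology'' says exactly that $H_1(X(L,\gamma,k);\Z)\cong H_1(X;\Z)$ for every simple closed curve $\gamma\subset L$ and every integer $k$, i.e. $H_1(\tilde X;\Z)\cong H_1(X;\Z)$ for every $\tilde X=X_{(L,\varphi_{Lag},\gamma,1,k)}$.

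For part (2), assume $[L]$ is a torsion class. Since $\tilde X=X_{(L,\varphi_{Lag},\gamma,1,k)}$ satisfies $H_1(\tilde X;\Z)\cong H_1(X;\Z)$ for all $\gamma$ and $k$, we are exactly in the situation of Proposition \ref{r1k}(2), whose conclusion is that $\varphi_{Lag}$ is a rational topological preferred framing of $L$. (One should note that Proposition \ref{r1k} is stated under $[L]=0$ in $H_2(X;\Q)$, which holds here because $[L]$ is torsion, so this application is legitimate.) For part (1), assume in addition that $H_2(X;\Z)$ is torsion free and $[L]=0$ in $H_2(X;\Z)$. By the universal coefficient theorem $H_1(X;\Z)$ then has no torsion, so the hypotheses of Proposition \ref{1k} are met; since $H_1(\tilde X;\Z)\cong H_1(X;\Z)$ for every $\tilde X$ obtained by a $(1,k)$-surgery with respect to $\varphi_{Lag}$, that proposition gives directly that $\varphi_{Lag}$ is a topological preferred framing. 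Alternatively, part (1) follows from part (2) together with the final sentence of section \ref{PF}: a rational topological preferred framing whose ambient $H_2$ is torsion free is automatically an integral topological preferred framing.

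The only genuine point requiring care — and the main ``obstacle,'' though it is more a matter of bookkeeping than of real difficulty — is the identification of the Luttinger surgery $X(L,\gamma,k)$ with the abstract $(1,k)$-surgery $X_{(L,\varphi_{Lag},\gamma,1,k)}$. This is where one invokes the description in section \ref{construction}: the gluing map $g_k=\varphi^{-1}\circ h_k\circ\varphi$ sends the meridian $\mu$ to $[\mu]+k[\gamma_{\varphi}]$ in $H_1(Z;\Z)$, which is exactly the defining relation $[\tilde\mu]=p[\mu]+k[\gamma_\varphi]$ with $p=1$; and by Lemma \ref{framing} the diffeomorphism type is independent of the choice of Lagrangian framing, so ``$(1,k)$-surgery with respect to the Lagrangian framing'' is well defined and agrees with $X(L,\gamma,k)$. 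Once this identification is in hand, parts (1) and (2) are immediate citations of Propositions \ref{1k} and \ref{r1k} respectively, with no further computation.
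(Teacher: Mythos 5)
Your proposal is correct and is essentially the paper's own proof, which simply cites Propositions \ref{1k} and \ref{r1k} after identifying a Luttinger surgery with a $(1,k)$-surgery relative to the Lagrangian framing (an identification the paper already makes in section \ref{construction}). The only cosmetic point is that passing from ``$H_2(X;\Z)$ torsion free'' to ``$H_1(X;\Z)$ torsion free'' uses Poincar\'e duality together with the universal coefficient theorem, not the latter alone; this matches the implicit identification the authors themselves use in Proposition \ref{1k}.
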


\begin{proof}
The result follows directly from  Propositions \ref{1k} and \ref{r1k}.
\end{proof}

In particular, we have

\begin{cor}\label{ltp1} 
If $\kappa(X)=-\infty$ and $L$ is a Lagrangian torus in $X$, then the Lagrangian
framing of $L$ is  a topological preferred framing.
\end{cor}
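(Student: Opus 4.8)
The plan is to combine Corollary \ref{ltp1}'s two prerequisites—that Luttinger surgeries along $L$ preserve integral homology, and that $L$ satisfies a null-homology hypothesis—with the characterization of topological preferred framings obtained in Proposition \ref{prefer}. First I would recall that for a minimal symplectic $4$-manifold with $\kappa(X)=-\infty$ we have $b^+=1$; since blowing up does not change $b^+$, the same holds for non-minimal $X$ with $\kappa(X)=-\infty$. The key structural input is that in a symplectic $4$-manifold with $b^+=1$ the homology class of any Lagrangian torus is a torsion class: indeed, a Lagrangian torus $L$ has $[L]^2=0$ and $K_\omega\cdot[L]=0$ (the latter by the adjunction-type vanishing, since $L$ is a torus of self-intersection zero), and in a $b^+=1$ manifold the orthogonal complement of $[\omega]$ in $H^2(X;\R)$ is negative definite, so a class pairing to zero with $[\omega]$ and having square zero must be zero in $H_2(X;\R)$, hence torsion in $H_2(X;\Z)$.

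Next I would invoke Theorem \ref{int2}: since $\kappa(X)=-\infty$, any Luttinger surgery $(\tilde X,\tilde\omega)$ along $L$ is symplectomorphic to $(X,\omega)$, and in particular $\tilde X$ is diffeomorphic to $X$, so $H_1(\tilde X;\Z)\cong H_1(X;\Z)$. Thus every Luttinger surgery along $L$ preserves the integral homology, which is exactly the hypothesis of Proposition \ref{prefer}. To conclude that the Lagrangian framing is a \emph{topological} preferred framing (not merely a rational one), I need $L$ to be null-homologous over $\Z$ and $H_2(X;\Z)$ to be torsion free; the subtlety is that minimal $\kappa=-\infty$ manifolds are $\C\PP^2$ or irrational ruled surfaces, and non-minimal ones are their blow-ups, all of which have torsion-free $H_2$, while for $\C\PP^2$, rational ruled surfaces, and their blow-ups one also has $H_1(X;\Z)=0$. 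When $H_1(X;\Z)=0$ a torsion class in $H_2(X;\Z)$ is automatically zero (as $H_2$ is then free), so $[L]=0$ integrally. For the irrational ruled case $H_1\neq 0$ but $H_2(X;\Z)\cong H_2(\Sigma_g\times S^2;\Z)$ (up to the blow-up summands) is still torsion free, and a torsion element of a torsion-free group is zero, so again $[L]=0\in H_2(X;\Z)$.

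With $[L]=0$ in $H_2(X;\Z)$ and $H_2(X;\Z)$ torsion free established in all cases, Proposition \ref{prefer}(1) applies directly and yields that the Lagrangian framing of $L$ is a topological preferred framing. The combination with Remark \ref{lambda}(3) then gives the vanishing of the Fintushel--Stern invariant $\lambda(L)$ whenever it is defined, since $\varphi_{Lag}$ agrees with the unique topological preferred framing $\varphi_0$ precisely when $\lambda(L)=0$.

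The main obstacle I anticipate is the bookkeeping needed to verify uniformly, across the list of minimal models with $\kappa=-\infty$ and their blow-ups, that $[L]=0$ over $\Z$ rather than merely over $\Q$; this rests on the two independent facts that (i) $H_2$ is torsion free for all these manifolds, and (ii) $[L]$ is torsion, with (ii) coming from the $b^+=1$ light cone argument together with $K_\omega\cdot[L]=0$ and $[L]^2=0$. Once those are in hand the result is immediate from Proposition \ref{prefer}; everything else is an application of Theorem \ref{int2}.
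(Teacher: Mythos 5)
Your argument is correct and follows essentially the same route as the paper: use $b^+=1$ to see $[L]$ is torsion, torsion-freeness of $H_2(X;\Z)$ for these manifolds to conclude $[L]=0$ integrally, Theorem \ref{int2} to get preservation of integral homology under any Luttinger surgery along $L$, and then Proposition \ref{prefer}(1). The extra details you supply (the light-cone argument and the case check of minimal models and their blow-ups) simply expand the paper's one-line assertions.
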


\begin{proof} Since $b^+(X) = 1$ and $H_2(X;\Z)$ has no torsion, $L$ is null-homologous.
For any $\tilde X$ given from $X$ by applying Luttinger surgery along $L$, $\tilde X$
is diffeomorphic to $X$ by Theorem \ref{int2}.
Now the claim follows from Proposition
\ref{prefer}.
\end{proof}

In the case that  $X$ is a symplectic CY surface, it is convenient to introduce 

\begin{definition}\label{ce}
An embedded 2-torus $L$ of a 4-manifold $X$ is called essential if $[L]\neq 0$ in $H_2(X)$. 
Moreover, $L$ is called completely essential if $r(\ker i_1^\Q)=3$.
\end{definition}

\begin{prop}\label{ltp2}
If $\kappa(X)=0$ and $X$ is an integral homology
K3, then any Lagrangian torus $L$ satisfies one of
the following conditions:

\begin{enumerate}
\item $L$ is null-homologous  and the Lagrangian framing is a topological preferred framing.

\item $L$ is completely essential.
\end{enumerate}
\end{prop}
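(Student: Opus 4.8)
The plan is to use the homological dichotomy provided by Lemma \ref{kernel}(1), combined with the fact that any Luttinger surgery along $L$ preserves the integral homology when $X$ is an integral homology K3. First I would recall the setup: let $U$ be a tubular neighborhood of $L$, $Z=\partial U\cong T^3$, $Y=X-U$, and consider $i_1^{\Q}:H_1(Z)\to H_1(Y)$ and $i_2^{\Q}:H_2(Z)\to H_2(Y)$. By Lemma \ref{kernel}(1) we have $r(\ker i_1)+r(\ker i_2)=3$, and by Lemma \ref{kernel}(3) we know $r(\ker i_1)\ge 1$. So the possible values of $r(\ker i_1)$ are $1$, $2$, or $3$. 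The case $r(\ker i_1)=3$ is precisely condition (2), $L$ completely essential. The case $r(\ker i_1)=2$ is, by the discussion following Lemma \ref{kernel} (the simply-connected case), equivalent to $[L]=0$ in $H_2(X;\Q)$; and since $X$ is an integral homology K3, $H_2(X;\Z)$ is torsion-free, so $[L]=0$ in $H_2(X;\Q)$ forces $[L]=0$ in $H_2(X;\Z)$, i.e. $L$ is null-homologous. Thus the bulk of the argument is to rule out $r(\ker i_1)=1$ and, in the $r(\ker i_1)=2$ case, to deduce that the Lagrangian framing is topological preferred.

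For the null-homologous case, once we know $[L]=0$ in $H_2(X;\Z)$ and $H_2(X;\Z)$ is torsion-free, Proposition \ref{prefer}(1) applies directly: since $X$ is an integral homology K3, Conjecture-free results already in hand — namely that a Luttinger surgery preserves the Kodaira dimension (Theorem \ref{kod}) and minimality (Proposition \ref{minimality}), hence $\tilde X$ is again a symplectic CY surface, and by Theorem \ref{int3} (using $\chi(X)=24>0$) $\tilde X$ has the same integral homology type as $X$ — give that any Luttinger surgery along $L$ preserves integral homology. Then Proposition \ref{prefer}(1) yields that the Lagrangian framing is a topological preferred framing, which is condition (1).

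The main obstacle is eliminating $r(\ker i_1)=1$. Here the idea is that if $r(\ker i_1)=1$ then $r(\ker i_2)=2$, and by Lemma \ref{meridian}(2) applied with $[\mu]\notin\ker i_1$ (which holds by Lemma \ref{mul} since $[L]\ne0$ — note $r(\ker i_1)=1$ means $[L]$ is essential), $\rho_1$ is injective, so a torus surgery changing the meridian class will in general change $b_1$ or the torsion of $H_1$. Concretely, for a Lagrangian framing $\varphi_{\mathrm{Lag}}$ with $H_{1,\varphi_{\mathrm{Lag}}}$ a rank-2 complement to $[\mu]$, we can choose $\gamma$ so that $[\gamma_\varphi]$ does not lie in $\ker i_1$ (possible since $\ker i_1$ has rank only $1$ while $H_{1,\varphi}$ has rank $2$), and then a Luttinger $(1,k)$-surgery replaces $[\mu]$ by $[\mu]+k[\gamma_\varphi]$, which for varying $k$ produces $H_1(\tilde X;\Z)=H_1(Y;\Z)/\langle i_1^{\Z}([\mu]+k[\gamma_\varphi])\rangle$ with genuinely varying order of the torsion subgroup (as in the knot-surgery example following Proposition \ref{1k}) — contradicting the hypothesis that every Luttinger surgery along $L$ preserves the integral homology of the K3. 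I would need to check carefully that such a $\gamma$ is realizable by a simple closed curve on $L$ compatible with the Lagrangian framing and that the resulting homology really does change; this is where the argument must be made rigorous rather than heuristic, but morally it follows the same mechanism already exhibited in the explicit lens-space-times-$S^1$ computation in the preceding subsection.
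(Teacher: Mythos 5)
Your handling of the null-homologous case is fine and is essentially the paper's route: Theorems \ref{kod}, \ref{int3} and Proposition \ref{minimality} give that every Luttinger surgery along $L$ preserves integral homology, and Proposition \ref{prefer}(1) (using that $H_2$ of an integral homology K3 is torsion free) yields condition (1). The genuine gap is in your elimination of the remaining case. You invoke Lemma \ref{mul} backwards: that lemma says $[\mu]\in\ker i_1$ \emph{if and only if} $[L]\neq 0$ in $H_2(X)$, so in your case $r(\ker i_1)=1$, where you have just argued that $L$ is essential, the meridian satisfies $i_1[\mu]=0$. Your assertion that $[\mu]\notin\ker i_1$, hence that $\rho_1$ is injective by Lemma \ref{meridian}(2), is the opposite of what holds, and the mechanism you build on it (a ``genuinely varying'' torsion subgroup of $H_1(\tilde X;\Z)$, modeled on the lens-space example) is both unsupported by your premises and, as you yourself admit, left heuristic. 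The correct and much simpler mechanism is the one the paper uses: since $i_1^\Q[\mu]=0$ but $r(\ker i_1^\Q)<3$, one can choose $\gamma\subset L$ with $i_1^\Q[\gamma_\varphi]\neq 0$; after the surgery $X(L,\gamma,1)$ the new meridian $[\tilde\mu]=[\mu]+[\gamma_\varphi]$ has $i_1^\Q[\tilde\mu]\neq 0$, so Proposition \ref{compare}(2) forces $b_1(\tilde X)=b_1(X)-1$, contradicting Theorem \ref{int3}. Note this argument disposes of every essential $L$ with $r(\ker i_1^\Q)\neq 3$ at once, so the trichotomy on $r(\ker i_1)$ is not needed.

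A secondary point: your identification of $r(\ker i_1)=2$ with $[L]=0$ cites the paper's remark for \emph{simply connected} $X$, whereas an integral homology K3 is only known to have $H_1(X;\Z)=0$. The fact you need is true, but you should derive it from $b_1(X)=0$: if $[L]\neq 0$ then $i_1[\mu]=0$ by Lemma \ref{mul}, and the computation in the proof of Proposition \ref{compare}(2) gives $b_1(Y)=b_1(X)=0$, so $i_1^\Q$ is the zero map and $\ker i_1^\Q=H_1(Z;\Q)$ has rank $3$; if instead $[L]=0$ then $b_1(Y)=1$ and $[\mu]\notin\ker i_1$, so $r(\ker i_1)=2$. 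In particular $r(\ker i_1)=1$ cannot occur when $b_1(X)=0$, which both repairs your case analysis and shows the dichotomy in the statement directly.
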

\begin{proof}
When $L$ is null-homologous, the claim follows from Theorem \ref{int3} 
and
Proposition \ref{prefer}.

When $L$ is essential, $[\mu]\in\ker i_1^\Q$ by Lemma \ref{mul}. 
If $r(\ker i_1^\Q)\neq 3$, there exists $\gamma\subset L$ such that $i_1^\Q ([\gamma_\varphi ])\neq 0$.
In the manifold $\tilde{X}=X(L, \gamma, 1)$, the class $[\tilde{\mu}]=[\mu]+[\gamma_\varphi]$ is nonzero in $H_1(Y)$.
By Proposition \ref{compare}, $b_1(X)\neq b_1(\tilde{X})$, which contradicts Theorem \ref{int3}.
So $r(\ker i_1^\Q)= 3$ and $L$ is completely essential.
\end{proof}

Similarly we have 
\begin{prop}\label{ltp3}
If $\kappa(X)=0$ and $X$ is an integral homology
 Enriques surface, then any Lagrangian torus $L$ satisfies one of
the following conditions:

\begin{enumerate}
\item $L$ is null-homologous  and the Lagrangian framing is a topological preferred framing.
\item $[L]$ is  torsion  and the Lagrangian framing is a rational topological preferred framing.

\end{enumerate}
\end{prop}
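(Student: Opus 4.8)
The plan is to split into cases according to the class $[L]\in H_2(X;\Z)$, mirroring the proof of Proposition~\ref{ltp2}; the one new feature is that an integral homology Enriques surface carries $2$--torsion in $H_2$, so the null--homologous case will require an integral refinement beyond what is needed for K3. I would first observe that an integral homology Enriques surface has $b^+=1$, so the homology class of any Lagrangian torus is torsion (as in the proof of Theorem~\ref{int2}); hence only cases (1) and (2) can occur. Next, by Theorem~\ref{int3} every Luttinger surgery along $L$ preserves the integral homology type of $X$, so for each resulting $\tilde X$ one has $H_1(\tilde X;\Z)\cong H_1(X;\Z)\cong\Z/2$ and the torsion subgroups of $H_2(\tilde X;\Z)$ and $H_2(X;\Z)$ coincide.

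With this in hand, case (2) is immediate: when $[L]$ is torsion, Proposition~\ref{prefer}(2) shows that the Lagrangian framing $\varphi_{Lag}$ is a rational topological preferred framing. This also settles the ``rational'' part of case (1), so when $[L]=0$ it remains to upgrade rational to integral. Here I would proceed as follows. By Proposition~\ref{exist}(1) there is a topological preferred framing $\varphi_0$, and $\ker i_1^\Z\subset H_{1,\varphi_0}$ by Proposition~\ref{exist}(2). Running the argument of Proposition~\ref{r1k}(2) with $\varphi=\varphi_{Lag}$ --- for $a=s[\mu]+t[\gamma_\varphi]\in\ker i_1^\Z$ the $(1,kt)$--surgery has meridian $[\tilde\mu]=(1-sk)[\mu]+ka$, so $H_1(\tilde X;\Z)\cong H_1(Y;\Z)/\langle(1-sk)\,i_1^\Z[\mu]\rangle$; since $b_1(Y)=1$ and $i_1^\Z[\mu]$ is non--torsion (Corollary~\ref{mulz}), the order of this group grows without bound in $k$ unless $s=0$, whereas it must equal $2$ --- forces $s=0$, and so $\ker i_1^\Z\subset H_{1,\varphi_{Lag}}$, i.e.\ $[L_{\varphi_{Lag}}]\in\mathbf{ann}_\Z(\ker i_1^\Z)$.

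The remaining point is to promote this to $[L_{\varphi_{Lag}}]\in\ker i_2^\Z$. The subgroups $H_{1,\varphi_0}$ and $H_{1,\varphi_{Lag}}$ differ by a shear along $[\mu]$, so $[L_{\varphi_0}]-[L_{\varphi_{Lag}}]=d\,(\mu\wedge c_0)$ in $H_2(Z;\Z)$ for some integer $d$ and a primitive class $c_0$; if $r(\ker i_1^\Q)=2$ then in fact $H_{1,\varphi_0}=H_{1,\varphi_{Lag}}$ and there is nothing to prove, and if $r(\ker i_1^\Q)=1$ one checks that $c_0$ spans $\ker i_1^\Q$ over $\Q$. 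Since $[L_{\varphi_0}]\in\ker i_2^\Z$ and $\mu\wedge c_0\in\mathbf{ann}_\Q(\ker i_1^\Q)=\ker i_2^\Q$ by Lemma~\ref{kernel}, I get $i_2^\Z([L_{\varphi_{Lag}}])=-d\,i_2^\Z(\mu\wedge c_0)$, a torsion class in $H_2(Y;\Z)$. The main obstacle --- the only place where the Enriques case genuinely differs from the torsion--free K3 case of Proposition~\ref{ltp2} --- will be to show that this torsion class vanishes; the plan is to show that a nonzero value would register as a change in the torsion subgroup of $H_2(\tilde X;\Z)$ under an appropriate $(1,k)$--surgery along $(L,\varphi_{Lag},\gamma)$, which is ruled out by the integral homology invariance from Theorem~\ref{int3}. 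Once $i_2^\Z([L_{\varphi_{Lag}}])=0$, the Lagrangian framing is topological preferred, completing case (1).
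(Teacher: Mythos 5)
Your overall structure matches what the paper intends (it gives no details for this proposition beyond ``similarly'' to Proposition~\ref{ltp2}, i.e.\ Theorem~\ref{int3} plus Proposition~\ref{prefer}): $b^+=1$ forces $[L]$ to be torsion, Theorem~\ref{int3} gives integral homology invariance for every Luttinger surgery along $L$, and Proposition~\ref{prefer}(2) settles case (2). You are also right to flag the genuine subtlety the paper glosses over: for an integral homology Enriques surface both $H_1(X;\Z)$ and $H_2(X;\Z)$ contain $2$--torsion, so Propositions~\ref{1k} and \ref{prefer}(1) do not apply verbatim, and your rerun of the argument of Proposition~\ref{r1k}(2) with $\varphi=\varphi_{Lag}$ (using $H_1(\tilde X;\Z)\cong\Z/2$ for all $(1,k)$--surgeries, via Theorem~\ref{int3} and Corollary~\ref{mulz}) correctly yields $\ker i_1^\Z\subset H_{1,\varphi_{Lag}}$. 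The one real gap is that your final step is only a ``plan'': you do not actually prove that the torsion class $i_2^\Z([L_{\varphi_{Lag}}])$ vanishes in the subcase $r(\ker i_1^\Q)=1$.

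Fortunately that subcase is vacuous, so your argument already closes without any analysis of torsion in $H_2(Y;\Z)$ or $H_2(\tilde X;\Z)$. Indeed, when $[L]=0$ Lemma~\ref{mul} gives $i_1[\mu]\neq 0$, hence $b_1(Y)=b_1(X)+1=1$ by Lemma~\ref{meridian} (as in Proposition~\ref{compare}), so $\operatorname{Im} i_1^\Q$ is exactly one--dimensional and $r(\ker i_1^\Q)=2$; moreover $r(\ker i_1^\Z)=r(\ker i_1^\Q)=2$, since any integral class with torsion image has a multiple in $\ker i_1^\Z$. Thus $H_{1,\varphi_0}$ and $H_{1,\varphi_{Lag}}$ are two direct--summand complements of $\langle[\mu]\rangle$ in $H_1(Z;\Z)$ whose intersection contains the rank--two subgroup $\ker i_1^\Z$ (Proposition~\ref{exist}(2) for $\varphi_0$, your surgery argument for $\varphi_{Lag}$); writing the second complement as $\{v+\phi(v)[\mu]:v\in H_{1,\varphi_0}\}$ for a homomorphism $\phi:H_{1,\varphi_0}\to\Z$, a rank--two intersection forces $\phi=0$, i.e.\ $H_{1,\varphi_0}=H_{1,\varphi_{Lag}}$, whence $[L_{\varphi_{Lag}}]=\pm[L_{\varphi_0}]\in\ker i_2^\Z$ by Proposition~\ref{exist}(1). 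With this observation replacing your last paragraph the proof is complete, and it is in fact a more careful treatment of case (1) than the paper's implicit appeal to Proposition~\ref{prefer}.
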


\begin{proof}[Proof of Theorem \ref{int4}]
The first statement follows from Theorem \ref{cy2}, Corollary \ref{ltp1}, Propositions \ref{ltp2}(1) and \ref{ltp3}(1).
The last statement on $\lambda (L)$ follows from Remark \ref{lambda}(3).
\end{proof}

%%%%%%%%%%%%%%%%%%%%%%%%%%%%%%%%%%%%%%%%%%%%%%%%%%

%%%%%%%%%%%%%%%%%%%%%%%%%%%%%%%%%%%%%%%%%%%%%%%%%%

\end{document}